
\documentclass[12pt,thmsa]{article}
\usepackage{amssymb}
\usepackage{amsfonts}
\usepackage{enumerate}
\usepackage{amsmath}
\usepackage{cite}
\usepackage[top=3.8cm,bottom=3.8cm,textwidth=16cm,centering,a4paper]{geometry}
\usepackage{tikz}
\usepackage{caption2}
\usepackage{subfigure}
\usepackage{tcolorbox}
\usepackage{hyperref}
\usepackage{cleveref}
\usepackage{indentfirst}

\setcounter{MaxMatrixCols}{10}

\allowdisplaybreaks

\setlength{\textheight}{235mm} \setlength{\textwidth}{175mm}
\setlength{\voffset}{-7mm} \setlength{\hoffset}{-9mm}

\newtheorem{theorem}{Theorem}[section]

\newtheorem{proposition}[theorem]{Proposition}
\newtheorem{corollary}[theorem]{Corollary}
\newtheorem{remark}[theorem]{Remark}

\newtheorem{lemma}[theorem]{Lemma}

\newenvironment{proof}[1][Proof]{\noindent\textbf{#1.} }{\ \rule{0.5em}{0.5em}}

\begin{document}

\title{On planar Schr\"{o}dinger-Poisson systems with repulsive interactions in the mass supercritical
regime}
\date{}
\author{Juntao Sun$^{a}$\thanks{%
E-mail address: jtsun@sdut.edu.cn(J. Sun)}, Shuai Yao$^{a}$\thanks{%
E-mail address: shyao@sdut.edu.cn (S. Yao)}, He Zhang$^{b}$\thanks{
E-mail address: hehzhang@163.com(H. Zhang)} \\
{\footnotesize $^{a}$\emph{School of Mathematics and Statistics, Shandong
University of Technology, Zibo 255049, PR China }}\\
{\footnotesize $^b$\emph{School of Mathematics and Statistics, Central South
University, Changsha 410083, PR China }}}
\maketitle

\begin{abstract}
In this paper, we investigate solutions with prescribed $L^{2}$-norm (i.e.,
prescribed mass) for the planar Schr\"{o}dinger-Poisson (SP) equation%
\begin{equation*}
-\Delta u+\lambda u+\alpha \left( \log |\cdot |\ast |u|^{2}\right)
u=|u|^{p-2}u,\ \text{in}\ \Omega_{R} ,
\end{equation*}%
where $\lambda \in \mathbb{R}$ is unknown, $\alpha <0,p>4$ and $\Omega_{R}
\subseteq \mathbb{R}^{2}$ is a domain. First, we prove that the energy
functional $J$ corresponding to the SP equation in $\mathbb{R}^{2}$ is
unbounded both above and below on the Pohozaev manifold $\mathcal{P}$; this
explains the reason why the minimax level of $J$ is difficult to determine,
as referenced in [Cingolani and Jeanjean, SIAM J. Math. Anal., 2019].
Second, we establish the existence of a ground state and a high-energy
solution, both with positive energy in a large bounded domain $\Omega_{R} $,
which is a substantial advancement in addressing an open problem proposed in
[Cingolani and Jeanjean, SIAM J. Math. Anal., 2019]. Finally, we analyze the
asymptotic behavior of solutions as the domain $\Omega_{R} $ is extended to
the entire space $\mathbb{R}^{2}$.
\end{abstract}

\textbf{Keywords:} Schr\"{o}dinger-Possion system; Repulsive interactions;
Logarithmic convolution; Normalized solutions; Mass supercritical case.

\textbf{MSC(2020):} 35J47, 35J50, 35Q40

\section{Introduction}

Our starting point is the planar Schr\"{o}dinger--Poisson system of the form
\begin{equation}
\left\{
\begin{array}{ll}
i\partial _{t}\phi -\Delta \phi +\alpha w\phi =|\phi |^{p-2}\phi ,\quad &
\forall (t,x)\in \mathbb{R}^{1+2}, \\
\Delta w=|\phi |^{2}, &
\end{array}%
\right.  \label{E1}
\end{equation}%
where $\phi :\mathbb{R}\times \mathbb{R}^{2}\rightarrow \mathbb{C}$ is the
time-dependent wave function, $w$ represents the Newtonian potential
corresponding to the nonlocal self-interaction of $\phi $, the coupling
constant $\alpha \in \mathbb{\mathbb{R}}$ characterizes the relative
strength of the potential, and the sign of $\alpha $ determines whether the
potential interaction is attractive or repulsive: specifically, the
interaction is attractive if $\alpha >0$ or repulsive if $\alpha <0$. Such a
system originates from quantum mechanics \cite{BBL,CL,L} and semiconductor
theory \cite{Li,MRS}.

An interesting topic related to System (\ref{E1}) is to find standing waves
of the form $\phi (x,t)=e^{-i\lambda t}u(x),$ where $\lambda \in \mathbb{R}$
and $u:\mathbb{R}^{2}\rightarrow \mathbb{R}.$ Substituting this into System (%
\ref{E1}) reduces it to the following system:%
\begin{equation}
\left\{
\begin{array}{ll}
-\Delta u+\lambda u+\alpha wu=|u|^{p-2}u, & \text{ in }\mathbb{R}^{2}, \\
\Delta w=u^{2}, & \text{ in }\mathbb{R}^{2}.%
\end{array}%
\right.  \label{E2}
\end{equation}%
The second equation of System (\ref{E2}) only determines $w$ up to harmonic
functions. It is natural to choose $w$ as the negative Newton potential of $%
u^{2}$, i.e., the convolution of $u^{2}$ with the fundamental solution $\Phi
$ of the Laplacian, where $\Phi (x)=\frac{1}{2\pi }\log |x|.$ Through this
formal inversion, System (\ref{E2}) is converted into an
integro-differential equation:%
\begin{equation}
\begin{array}{ll}
-\Delta u+\lambda u+\alpha (\Phi \ast |u|^{2})u=|u|^{p-2}u, & \ \text{in}\
\mathbb{R}^{2}.%
\end{array}
\label{E3}
\end{equation}%
A key feature of Equation (\ref{E3}) is the presence of a logarithmic
convolution potential, which is unbounded and sign-changing. For Equation (%
\ref{E3}), the applicability of variational methods non-trivial, as the
associated energy functional is not well-defined on $H^{1}(\mathbb{R}^{2})$
due to the influence of the logarithmic convolution potential. Inspired by
Stubbe \cite{S2008}, Cingolani and Weth \cite{CW2016} developed a
variational framework for Equation (\ref{E3}) in the smaller Hilbert space $X
$, defined as:%
\begin{equation*}
X:=\left\{ u\in H^{1}(\mathbb{R}^{2})\text{ }|\ \int_{\mathbb{R}^{2}}\log
\left( 1+|x|\right) |u|^{2}dx<\infty \right\} ,
\end{equation*}%
and proved the existence of ground state solutions when $p>4.$ Since then,
the study of nontrivial solutions for Equation (\ref{E3}) with a fixed
positive frequency $\lambda >0$ has garnered significant attention in recent
years. For related results involving different types of nonlinearities, we
refer the reader to \cite{ACFM,A2021,CT2021,CT2020,DW2017,LRZ2023,PWZ2024}.

When the frequency $\lambda $ is unknown in Equation (\ref{E3}), the $L^{2} $%
-norm of solutions to Equation (\ref{E3}) are prescribed, i.e., $\int_{%
\mathbb{R}^{2}}|u|^{2}dx=\rho $ for a given $\rho >0$; such solutions are
usually referred to as normalized solutions. From a physical perspective,
this study is particularly meaningful because solutions to System (\ref{E1})
conserve their mass over time. Additionally, from a mathematical standpoint,
investigating this scenario is more challenging due to the additional
requirement of satisfying the $L^{2}$-constraint. To the best of our
knowledge, the first contribution along this direction was recently made by
Cingolani and Jeanjean \cite{CJ2019}, where they established the existence
of solutions for the following problem
\begin{equation}
\left\{
\begin{array}{ll}
-\Delta u+\lambda u+\alpha \left( \log |\cdot |\ast |u|^{2}\right) u=\beta
|u|^{p-2}u, & \ \text{in}\ \mathbb{R}^{2}, \\
\int_{\mathbb{R}^{2}}|u|^{2}dx=\rho , &
\end{array}%
\right.  \label{E4}
\end{equation}%
depending on the parameters $\alpha ,\beta \in \mathbb{R}$ and $p>2.$ From
the variational perspective, solutions to Problem (\ref{E4}) can be obtained
as critical points of the energy functional $J(u):X\rightarrow \mathbb{R}$
defined by
\begin{equation*}
J(u)=\frac{1}{2}\int_{\mathbb{R}^{2}}|\nabla u|^{2}dx+\frac{\alpha }{4}\int_{%
\mathbb{R}^{2}}\int_{\mathbb{R}^{2}}\log \left( |x-y|\right)
|u(x)|^{2}|u(y)|^{2}dxdy-\frac{\beta }{p}\int_{\mathbb{R}^{2}}|u|^{p}dx
\end{equation*}%
under the constraint
\begin{equation*}
\mathcal{S}_{\rho }:=\left\{ u\in X:\int_{\mathbb{R}^{2}}|u|^{2}dx=\rho
\right\} .
\end{equation*}%
More precisely, it has been proven that $(i)$ a ground state solution exists
as a global minimizer of $J$ on $\mathcal{S}_{\rho }$ if $\alpha >0,\beta
\leq 0,$ and $p>2;$ or $\alpha >0,\beta >0,$ and $p<4;$ or $\alpha >0,\beta
>0,$ and $p=4;$ $(ii)$ two solutions exist if $\alpha >0,\beta >0,$ and $%
p>4; $ or $\alpha <0,0<K_{1}(\alpha )<\beta <K_{2}(\alpha ),$ and $p<4,$ by
virtue of the decomposition of Pohozaev manifold
\begin{equation*}
\mathcal{P}=\left\{ u\in \mathcal{S}_{\rho }:P(u)=0\right\} ,
\end{equation*}%
where $P(u)=0$ denotes the Pohozaev identity associated with Problem (\ref%
{E4}), as developed in \cite{So2020,So2020-1}; $(ii)$ no solution exists
when $\alpha <0,\beta \leq 0,$ and $p>2.$ For the case where the
nonlinearity in Problem (\ref{E4}) exhibits exponential critical growth,
several existence results have also been obtained in \cite{ABM,CRT,SYZ2024}.

In \cite{CJ2019}, Cingolani and Jeanjean pointed out that the case where $%
\alpha <0,\beta >0,$ and $p>4$ remains completely open, and it is unclear
how to identify a minimax level of the energy functional $J$. Motivated by
this observation, in this paper we delve into the underlying reasons for the
difficulty in determining such a minimax level and attempt to make
substantial progress on this open problem.

Despite the apparent simplicity of the geometric structure of $J$ when $%
\alpha <0,\beta >0,$ and $p>4,$ in particular for any $u\in S_{\rho }$ there
exists a unique $t>0$ such that $tu(tx)=:u_{t}\in \mathcal{P}$, we argue
that it is actually highly worthy of in-depth investigation. This
constitutes the first main result. Set
\begin{equation*}
M_{\rho }:=\inf_{u\in \mathcal{P}}J(u)\text{ and }\overline{M}_{\rho
}:=\sup_{u\in \mathcal{P}}J(u).
\end{equation*}%
We establish the following result.

\begin{theorem}
\label{T1.1} Let $\alpha <0,\beta >0,$ and $p>4$. Then $M_{\rho }=-\infty $
and $\overline{M}_{\rho }=+\infty $.
\end{theorem}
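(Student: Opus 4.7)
The plan is to exploit the $L^{2}$-preserving scaling $u_{t}(x):=tu(tx)$, which leaves $\mathcal{S}_{\rho}$ invariant. A direct calculation, combined with the transformation identity $\iint\log|x-y||u_{t}(x)|^{2}|u_{t}(y)|^{2}\,dxdy=A(u)-\rho^{2}\log t$, where $A(u)$ denotes the log-convolution integral, gives
\[
J(u_{t})=\tfrac{t^{2}}{2}\|\nabla u\|_{2}^{2}+\tfrac{\alpha}{4}A(u)-\tfrac{\alpha\rho^{2}}{4}\log t-\tfrac{\beta}{p}t^{p-2}\|u\|_{p}^{p},
\]
so the Pohozaev identity reads $P(u)=\|\nabla u\|_{2}^{2}-\tfrac{\alpha\rho^{2}}{4}-\tfrac{\beta(p-2)}{p}\|u\|_{p}^{p}=0$. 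Substituting back yields the compact representation
\[
J(v)=\frac{\alpha\rho^{2}}{4(p-2)}+\frac{p-4}{2(p-2)}\|\nabla v\|_{2}^{2}+\frac{\alpha}{4}A(v)\qquad\text{for every } v\in\mathcal{P}.
\]
Because $p>4$ and $\alpha<0$, the $\|\nabla v\|_{2}^{2}$-coefficient is strictly positive and the $A(v)$-coefficient strictly negative. Moreover, for each $u\in\mathcal{S}_{\rho}$ the map $t\mapsto J(u_{t})$ has a unique critical point $t_{u}>0$ which is its strict global maximum, so $u\mapsto u_{t_{u}}$ is a continuous surjection $\mathcal{S}_{\rho}\twoheadrightarrow\mathcal{P}$; it therefore suffices to exhibit sequences in $\mathcal{S}_{\rho}$ whose projections realise $J\to\pm\infty$.

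To prove $M_{\rho}=-\infty$, I would use a two-bump family. Fix $\phi\in C_{c}^{\infty}(\mathbb{R}^{2})$ with $\|\phi\|_{2}^{2}=\rho$ and set $u_{R}(x):=2^{-1/2}[\phi(x-Re_{1})+\phi(x+Re_{1})]$ for $R$ large. Standard disjoint-support estimates give that $\|\nabla u_{R}\|_{2}^{2}$ and $\|u_{R}\|_{p}^{p}$ converge to positive bounded limits, while the $i\neq j$ cross-interactions in $A$ contribute $\rho^{2}\log(2R)+O(1)$, yielding $A(u_{R})=\tfrac{\rho^{2}}{2}\log R+O(1)\to+\infty$. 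Hence $t_{u_{R}}$ is trapped in a compact subset of $(0,\infty)$, $\|\nabla v_{R}\|_{2}^{2}$ stays bounded, and $A(v_{R})=A(u_{R})-\rho^{2}\log t_{u_{R}}\to+\infty$, so the compact formula forces $J(v_{R})\to-\infty$. For $\overline{M}_{\rho}=+\infty$ I would instead use a many-bump family: place $k$ scaled copies of $\phi$ at the vertices of a regular $k$-gon of radius $R(k)=k$ (so that adjacent bumps stay separated by a fixed positive distance) and define $u_{(k)}(x):=k^{-1/2}\sum_{j=1}^{k}\phi(x-R(k)\omega_{j})$ with $\omega_{j}=e^{2\pi ij/k}$. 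Then $\|\nabla u_{(k)}\|_{2}^{2}$ is bounded while $\|u_{(k)}\|_{p}^{p}\sim k^{1-p/2}\|\phi\|_{p}^{p}\to 0$, so the Pohozaev projection equation $t^{2}\|\nabla u_{(k)}\|_{2}^{2}-\tfrac{\beta(p-2)}{p}t^{p-2}\|u_{(k)}\|_{p}^{p}=\tfrac{\alpha\rho^{2}}{4}$ forces $t_{(k)}\to+\infty$; balancing the two leading terms gives $t_{(k)}\sim k^{(p-2)/(2(p-4))}$. Consequently $\|\nabla v_{(k)}\|_{2}^{2}=t_{(k)}^{2}\|\nabla u_{(k)}\|_{2}^{2}\sim k^{(p-2)/(p-4)}$ grows polynomially in $k$, whereas, using the classical identity $\prod_{j=1}^{k-1}(1-e^{2\pi ij/k})=k$ to handle the cross-terms, $A(v_{(k)})=A(u_{(k)})-\rho^{2}\log t_{(k)}=O(\log k)$. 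The positive-coefficient polynomial term in the compact formula then dominates, and $J(v_{(k)})\to+\infty$.

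The principal technical obstacle is the careful bookkeeping of the $k^{2}-k$ log-convolution cross-terms and of the Pohozaev equation in the joint limit $k\to\infty$, $R(k)\to\infty$: one must simultaneously show that the geometric contribution to $A(u_{(k)})$ is only $\rho^{2}\log R(k)+O(1)$ (rather than the naive $O(k\log k)$) and that the constant term $\tfrac{\alpha\rho^{2}}{4}$ in the projection equation is asymptotically negligible against $t_{(k)}^{2}\|\nabla u_{(k)}\|_{2}^{2}$, so that the correct rate $t_{(k)}\sim k^{(p-2)/(2(p-4))}$ (and not $k^{1/2}$) emerges. Once these uniform estimates are secured, the compact representation of $J|_{\mathcal{P}}$ delivers both halves of Theorem~\ref{T1.1} at once.
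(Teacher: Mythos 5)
Your proposal is correct and follows essentially the same route as the paper: both rest on the representation $J(v)=\frac{p-4}{2(p-2)}\|\nabla v\|_{2}^{2}+\frac{\alpha}{4}\chi_{0}(v^{2},v^{2})+\frac{\alpha\rho^{2}}{4(p-2)}$ on $\mathcal{P}$, prove $M_{\rho}=-\infty$ with two widely separated bumps (bounded projection time, log cross-term $\to+\infty$ against $\alpha<0$), and prove $\overline{M}_{\rho}=+\infty$ with $k$ bumps of mass $\rho/k$ each, so that the Pohozaev projection inflates the gradient term polynomially while the log terms stay $O(\log k)$. The remaining differences — bumps on a polygon versus along a line, a generic $C_{c}^{\infty}$ profile versus the Dirichlet eigenfunction $\psi$, and deriving the asymptotic rate of $t_{(k)}$ versus pre-scaling $W_{n}$ and bounding $t_{1,n}$ from below — are implementation details, not a different argument (and your intermediate constant $\rho^{2}\log(2R)$ for the two-bump cross term should be $\tfrac{\rho^{2}}{2}\log(2R)$, though your stated conclusion $A(u_{R})=\tfrac{\rho^{2}}{2}\log R+O(1)$ is the correct one).
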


\begin{remark}
\label{R1-0} By Theorem \ref{T1.1} we find that the energy functional $J$ is
unbounded both above and below on $\mathcal{P}$. This reveals the reason why
the minimax level of $J$ is difficult to determine. Therefore, the commonly
used methods, such as the Pohozaev manifold and the Mountain Pass Theorem,
are no longer applicable to this case.
\end{remark}

Owing to the intractability of the open problem, we restrict our
investigation to a large bounded domain, namely%
\begin{equation}
\left\{
\begin{array}{ll}
-\Delta u+\lambda u+\alpha \left( \log |\cdot |\ast |u|^{2}\right)
u=|u|^{p-2}u, & \ \text{in}\ \Omega _{R}, \\
\int_{\Omega _{R}}|u|^{2}dx=\rho , &
\end{array}%
\right.  \label{E5}
\end{equation}%
where $\alpha <0,p>4,$ and $\Omega _{R}:=\{Rx\in \mathbb{R}^{N}:x\in \Omega
\}$ with $R>0.$ Here $\Omega $ is a bounded domain satisfying $\max_{x,y\in
\Omega }|x-y|=1$. Our second objective is to find normalized solutions for
Problem (\ref{E5}).

Note that compared to the study of normalized solutions in the entire space,
there appear to be only a few results focusing on normalized solutions in
bounded domains. In fact, these two settings are fundamentally distinct:
each requires a specific approach, and the results are generally not
comparable. For instance, the Pohozaev manifold is not applicable in bounded
domains, as the invariance under translation and dilation is lost.

Before stating main results, we introduce a few notations. Let $\Vert \cdot
\Vert _{q}$ and $\Vert \cdot \Vert _{q,\Omega }$ denote the norms of the
spaces $L^{q}(\mathbb{R}^{N})$ and $L^{q}(\Omega ),$ respectively. Let $%
\mathcal{C}_{p}$ be the best constant in the Gagliardo-Nirenberg inequality (%
\cite[Theorem in Lecture II]{N1959}):
\begin{equation}
\Vert u\Vert _{q}^{q}\leq \mathcal{C}_{q}\Vert u\Vert _{2}^{2}\Vert \nabla
u\Vert _{2}^{q-2}\ \text{for }q>2.  \label{E6}
\end{equation}%
Let $\widetilde{J}_{R}(u)=J(u)|_{\Omega _{R}}$ with $\beta =1$ and $\mathcal{%
S}_{\Omega _{R},\rho }=\mathcal{S}_{\rho }|_{\Omega _{R}}$. Then we
establish the following results.

\begin{theorem}
\label{T1.3} Let $\alpha <0,p>4,$ and $\rho >0$. Then there exists $R_{\rho
}>0$ such that for any $R>R_{\rho },$ the following statements hold true.

\begin{itemize}
\item[$(i)$] There exists $\alpha _{R,\rho }^{\ast }=\alpha (R,\rho )>0$
such that for any $0<|\alpha |<\alpha _{R,\rho }^{\ast },$ Problem (\ref{E5}%
) admits a local minimum type solution $u_{R}^{0}\in H_{0}^{1}(\Omega _{R})$
satisfying $\widetilde{J}_{R}(u_{R}^{0})>0$ for some $\lambda =\lambda
_{R}^{0}\in \mathbb{R}$.

\item[$(ii)$] If, in addition, $\Omega $ is a star-shaped domain, then there
exists $R_{\ast }\geq R_{\rho }$ such that for any $R>R_{\ast }$ and $%
0<|\alpha |<\alpha _{R,\rho }^{\ast },$ the solution $u_{R}^{0}$ obtained in
$(i)$ is a ground state.

\item[$(iii)$] If, in addition, $\Omega $ is a star-shaped domain, then for
any $0<|\alpha |<\alpha _{R,\rho }^{\ast },$ Problem (\ref{E5}) admits a
second solution of mountain-pass type $u_{R}^{1}\in H_{0}^{1}(\Omega _{R})$
satisfying $\widetilde{J}_{R}(u_{R}^{1})>\widetilde{J}_{R}(u_{R}^{0})$ for
some $\lambda =\lambda _{R}^{1}\in \mathbb{R}$.
\end{itemize}
\end{theorem}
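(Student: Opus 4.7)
The plan is to combine a local-minimum construction, valid on any sufficiently large bounded $\Omega_{R}$, with a constrained mountain-pass argument, and then to use the Pohozaev identity on the star-shaped $\Omega_{R}$ in (ii)--(iii) to rule out lower-energy critical points. Throughout I exploit the fact that on a bounded domain the embedding $H^{1}_{0}(\Omega_{R})\hookrightarrow L^{q}(\Omega_{R})$ is compact for every $q\in[2,\infty)$, so the obstruction encountered in Theorem~\ref{T1.1} disappears. The starting observation is the scalar model
\begin{equation*}
h(t)=\tfrac{1}{2}t^{2}-\tfrac{\mathcal{C}_{p}\rho}{p}t^{p-2},\qquad t\ge 0,
\end{equation*}
which, since $p>4$, is positive on an interval $(0,t_{0})$ and attains a unique positive maximum at some $t^{\ast}\in(0,t_{0})$. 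Combining (\ref{E6}) with the splitting $\log|x-y|=\log^{+}|x-y|-\log^{-}|x-y|$ (the first factor uniformly bounded by $\log R$ on $\Omega_{R}$, the second with an integrable singularity controlled by Hardy--Littlewood--Sobolev type estimates) yields, uniformly on $\mathcal{S}_{\Omega_{R},\rho}$,
\begin{equation*}
\widetilde{J}_{R}(u)\ge h\bigl(\|\nabla u\|_{2}\bigr)-|\alpha|\,C(R,\rho)\bigl(1+\|\nabla u\|_{2}^{2}\bigr).
\end{equation*}
Fixing $r\in(0,t^{\ast})$ and using the Poincaré lower bound $\|\nabla u\|_{2}^{2}\ge\lambda_{1}(\Omega_{R})\rho$, one produces a threshold $\alpha^{\ast}_{R,\rho}>0$ such that for $0<|\alpha|<\alpha^{\ast}_{R,\rho}$, $\widetilde{J}_{R}$ is strictly positive on $B_{r}:=\{u\in\mathcal{S}_{\Omega_{R},\rho}\cap H^{1}_{0}(\Omega_{R}):\|\nabla u\|_{2}\le r\}$ and satisfies $\inf_{\|\nabla u\|_{2}=r}\widetilde{J}_{R}\ge h(r)/2$.

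For (i), since $\lambda_{1}(\Omega_{R})=R^{-2}\lambda_{1}(\Omega)\to 0$, for $R>R_{\rho}$ large enough the set $B_{r}$ contains functions with $\|\nabla u\|_{2}$ arbitrarily close to $\sqrt{\lambda_{1}(\Omega_{R})\rho}$ and energy strictly below the barrier at $\|\nabla u\|_{2}=r$. Ekeland's variational principle applied to $\widetilde{J}_{R}|_{\mathcal{S}_{\Omega_{R},\rho}}$ on the closed subset $B_{r}$ produces a Palais--Smale sequence $(u_{n})$ at the level $m_{R}=\inf_{B_{r}}\widetilde{J}_{R}>0$ with $\|\nabla u_{n}\|_{2}\le r$. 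By Rellich--Kondrachov, $u_{n}\to u_{R}^{0}$ strongly in every $L^{q}(\Omega_{R})$; since on a bounded domain $u\mapsto\log|\cdot|\ast|u|^{2}$ maps $L^{q}(\Omega_{R})$ continuously into $L^{\infty}(\Omega_{R})$ for large $q$, the associated Lagrange multipliers $\lambda_{n}$ are uniformly bounded and the convergence upgrades to strong $H^{1}_{0}$-convergence. The limit $u_{R}^{0}$ is an interior critical point in $B_{r}$ with $\widetilde{J}_{R}(u_{R}^{0})=m_{R}>0$, proving (i).

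For (iii), $u_{R}^{0}$ sits at the bottom of a mountain-pass geometry. Taking $\psi\in C^{\infty}_{c}(\Omega)$ with $\|\psi\|_{2}^{2}=\rho$, $x_{0}$ an interior point of $\Omega_{R}$, and $\widetilde{u}_{\sigma}(x)=\sigma\psi(\sigma(x-x_{0}))$, an explicit scaling shows $\|\widetilde{u}_{\sigma}\|_{2}^{2}=\rho$, $\|\nabla\widetilde{u}_{\sigma}\|_{2}^{2}=\sigma^{2}\|\nabla\psi\|_{2}^{2}$, $\|\widetilde{u}_{\sigma}\|_{p}^{p}=\sigma^{p-2}\|\psi\|_{p}^{p}$, while the logarithmic term changes by an additive $-(\log\sigma)\rho^{2}$; since $p>4$, the $|u|^{p}$-term dominates and $\widetilde{J}_{R}(\widetilde{u}_{\sigma})\to-\infty$ as $\sigma\to+\infty$. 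Joining $u_{R}^{0}$ to such a $\widetilde{u}_{\sigma}$ by a continuous path in $\mathcal{S}_{\Omega_{R},\rho}$ gives the minimax level
\begin{equation*}
c_{R}=\inf_{\gamma\in\Gamma}\max_{t\in[0,1]}\widetilde{J}_{R}\!\bigl(\gamma(t)\bigr)\ge\inf_{\|\nabla u\|_{2}=r}\widetilde{J}_{R}>\widetilde{J}_{R}(u_{R}^{0}),
\end{equation*}
and a standard constrained deformation lemma plus the same Rellich--Kondrachov compactness yield a second critical point $u_{R}^{1}$ at level $c_{R}$. For (ii), the Pohozaev identity on the star-shaped $\Omega_{R}$ delivers an additional scalar equation whose boundary term $\tfrac{1}{2}\int_{\partial\Omega_{R}}(x\cdot\nu)|\partial_{\nu}u|^{2}\,d\sigma$ is nonnegative; combining it with the Nehari-type identity obtained by testing against $u$, together with the smallness of $|\alpha|$ and largeness of $R$, forces every nontrivial critical point $v$ of $\widetilde{J}_{R}|_{\mathcal{S}_{\Omega_{R},\rho}}$ to satisfy $\|\nabla v\|_{2}\le r$, hence to lie in $B_{r}$ and satisfy $\widetilde{J}_{R}(v)\ge m_{R}=\widetilde{J}_{R}(u_{R}^{0})$; this identifies $u_{R}^{0}$ as a ground state.

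\textbf{Main obstacle.} The most delicate step is the uniform a priori bound $\|\nabla v\|_{2}\le r$ for \emph{arbitrary} critical points required in (ii)--(iii): the sign-indefinite, unbounded bilinear form $\iint\log|x-y||u|^{2}|u|^{2}$ couples the Pohozaev and Nehari identities with the unknown Lagrange multiplier $\lambda$, and disentangling them simultaneously, while tracking the $R$- and $\alpha$-dependence of the constants in the Gagliardo--Nirenberg-type estimates, is what ultimately fixes the thresholds $R_{\ast}\ge R_{\rho}$ and $\alpha^{\ast}_{R,\rho}$.
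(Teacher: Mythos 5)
Your part (i) follows the paper's own route (a gradient ball $\{\|\nabla u\|_{2,\Omega_R}\le x_\ast\}$, positivity of the constrained infimum there, Ekeland plus compact embeddings, and a boundary barrier beaten by the rescaled first eigenfunction), and it is essentially correct. The genuine gap is in part (iii): a ``standard constrained deformation lemma plus Rellich--Kondrachov compactness'' does not produce the mountain-pass critical point, because in the mass-supercritical regime $p>4$ Palais--Smale sequences of $\widetilde{J}_R|_{\mathcal{S}_{\Omega_R,\rho}}$ at the minimax level are not known to be bounded. The energy identity and the approximate Lagrange equation are only two relations among $\|\nabla u_n\|_{2}^{2}$, $\|u_n\|_{p}^{p}$, $\lambda_n$ and the logarithmic term, and since $\|u\|_p^p$ is controlled only by $\mathcal{C}_p\rho\|\nabla u\|_2^{p-2}$ with $p-2>2$, they cannot be combined to bound $\|\nabla u_n\|_2$; moreover, on a bounded domain the dilation device that usually supplies a third, Pohozaev-type relation along a Palais--Smale sequence is unavailable. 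Rellich--Kondrachov helps only after boundedness is secured. This is precisely why the paper introduces the perturbed family $\widetilde{J}_{R,s}$, $s\in[1/2,1]$, in (\ref{E28}) and invokes the Borthwick--Chang--Jeanjean--Soave theorem (Theorem \ref{L4.5}) to obtain bounded Palais--Smale sequences for almost every $s$ (Proposition \ref{L4.6}), and then uses the star-shaped Pohozaev inequality (\ref{E25}) to bound the solutions $v_{R,s}$ uniformly in $s$ (Lemma \ref{L4.7}) so as to pass to the limit $s\to 1$ (Theorem \ref{L4.8}). Note that this is exactly where the star-shapedness hypothesis of (iii) enters; your sketch of (iii) never uses it, which is a symptom of the missing step.

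In part (ii) your a priori bound is also stated too strongly: you claim that \emph{every} nontrivial critical point $v$ satisfies $\|\nabla v\|_2\le r<t^\ast$, but this is incompatible with the rest of the theorem --- by Theorem \ref{T1.5} the mountain-pass solution $u_R^1$ converges in $H^1(\mathbb{R}^2)$ to the soliton of the limit problem, whose gradient norm is bounded below by $x_\ast$ (Lemma \ref{L4.10}), so for $R$ large the critical point $u_R^1$ leaves any such ball. What is true, and what the paper proves (Lemma \ref{L4.3}), is the conditional statement: any solution whose energy does not exceed $C_{R,\rho}$ satisfies, by (\ref{E25}), $\|\nabla v\|_{2,\Omega_R}^2\le\frac{2(p-2)}{p-4}\bigl(C_{R,\rho}-\frac{\alpha\rho^2}{4(p-2)}-\frac{\alpha}{4}\rho^2\log(1+R)\bigr)$, and the right-hand side tends to $0$ because $C_{R,\rho}\to 0^+$ and $\alpha\log(1+R)\to 0$ by (\ref{E26})--(\ref{E27}); hence such a solution lies in the gradient ball and its energy is at least $C_{R,\rho}$. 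This conditional version suffices to identify $u_R^0$ as a ground state, and is how (ii) should be repaired; as written, your unconditional claim would fail.
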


In Theorem \ref{T1.3}, if we assume $R$ is fixed (e.g. $R=1$), then the
existence result still holds.

\begin{corollary}
\label{T1.4} Let $\alpha <0$ and $p>4.$ Then there exists $0<\rho
_{0}\leq\rho^{\ast}$  such that for any $0<\rho \leq \rho _{0},$ there
exists $\alpha _{\rho }^{\ast }=\alpha (\rho )>0$ such that for any $%
0<|\alpha |<\alpha _{\rho }^{\ast },$ Problem (\ref{E5}) with $R=1$ admits a
local minimizer $u^{0}\in H_{0}^{1}(\Omega )$ for some $\lambda =\lambda
^{0}\in \mathbb{R}$ and a second solution of mountain-pass type $u^{1}\in
H_{0}^{1}(\Omega )$ for some $\lambda =\lambda ^{1}\in \mathbb{R}$.
Moreover, there exists $0<\rho _{1}\leq \rho _{0}$ such that the local
minimizer $u^{0}$ is a ground state for any $0<\rho \leq
\rho _{1}$ and $0<|\alpha |<\alpha _{\rho }^{\ast }$. Here%
\begin{equation*}
\rho ^{\ast }:=(\lambda _{1}(\Omega ))^{-\frac{p-4}{p-2}}\left[ \frac{p}{%
(p-2)\mathcal{C}_{p}}\right] ^{\frac{2}{p-2}},
\end{equation*}%
where $\lambda _{1}(\Omega )$ is the principal eigenvalue of $-\Delta $ with
Dirichlet boundary condition in $\Omega $, and $\mathcal{C}_{p}$ is given in
(\ref{E6}).
\end{corollary}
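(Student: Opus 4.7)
The plan is to treat Corollary~\ref{T1.4} as the $R=1$ analogue of Theorem~\ref{T1.3}, where the role of ``$R$ large'' is now played by ``$\rho$ small.'' For any $u\in\mathcal{S}_{\Omega,\rho}$, the Gagliardo--Nirenberg inequality~(\ref{E6}) gives $\|u\|_{p,\Omega}^p\le \mathcal{C}_p\rho\|\nabla u\|_{2,\Omega}^{p-2}$; the logarithmic convolution term is bounded on the bounded domain $\Omega$ (where $\mathrm{diam}(\Omega)=1$) by a quantity of order $\rho^2$; and the Poincar\'e inequality yields $\|\nabla u\|_{2,\Omega}^2\ge \lambda_1(\Omega)\rho$. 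Combining these,
\[
\widetilde{J}_1(u)\ge \tfrac12\|\nabla u\|_{2,\Omega}^2 \;-\; C|\alpha|\rho^2 \;-\; \tfrac{\mathcal{C}_p\rho}{p}\|\nabla u\|_{2,\Omega}^{p-2}.
\]
A direct calculation shows that the scalar function $g(t)=\tfrac12 t^2-\tfrac{\mathcal{C}_p\rho}{p} t^{p-2}$ attains a strict local maximum at $t_\ast(\rho)=\bigl(\tfrac{p}{\mathcal{C}_p\rho(p-2)}\bigr)^{1/(p-4)}$, and the Poincar\'e lower bound $\sqrt{\lambda_1(\Omega)\rho}$ lies strictly to the left of $t_\ast(\rho)$ precisely when $\rho<\rho^\ast$. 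This creates a potential-well structure, which I expect to persist under the nonlocal perturbation once $\rho\le \rho_0\le\rho^\ast$ and $|\alpha|<\alpha_\rho^\ast$ are chosen small.

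For the local minimizer I would restrict $\widetilde{J}_1$ to $A:=\{u\in\mathcal{S}_{\Omega,\rho}:\|\nabla u\|_{2,\Omega}^2\le t_\ast^2\}$ and minimize. The Rellich--Kondrachov compactness $H_0^1(\Omega)\hookrightarrow L^q(\Omega)$ (valid for every $q<\infty$ in dimension two) allows any minimizing sequence to converge strongly in $L^p(\Omega)$, and the logarithmic convolution term passes to the limit; provided $|\alpha|<\alpha_\rho^\ast$ is small enough that the infimum sits strictly below the lower bound of $\widetilde{J}_1$ on $\partial A$, the minimizer $u^0$ lies in the interior of $A$, giving a critical point with multiplier $\lambda^0\in\mathbb{R}$. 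For the mountain-pass solution, a spike-type test function of the form $n\varphi(n(x-x_0))$ (normalized to $\|\cdot\|_{2,\Omega}^2=\rho$) shows that $\widetilde{J}_1\to -\infty$ along a suitable direction in $\mathcal{S}_{\Omega,\rho}$ (since $p>4$); joining $u^0$ to such a $u_1$ by any continuous path forces a crossing of $\partial A$, where $\widetilde{J}_1$ is bounded below by a positive constant. The constrained mountain-pass theorem, combined with a Palais--Smale argument that is routine on the bounded domain $\Omega$ once boundedness of the Lagrange multiplier is checked, then produces $u^1$ with $\widetilde{J}_1(u^1)>\widetilde{J}_1(u^0)$.

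To establish the ground state property, I would shrink to $\rho_1\le\rho_0$ and argue that any critical point $u\in\mathcal{S}_{\Omega,\rho}$ with $\widetilde{J}_1(u)\le\widetilde{J}_1(u^0)$ must lie in $A$. Indeed, testing the equation against $u$ yields
\[
\|\nabla u\|_{2,\Omega}^2+\lambda\rho+\alpha\iint_{\Omega\times\Omega}\log|x-y|\,|u|^2|u|^2\,dx\,dy=\|u\|_{p,\Omega}^p,
\]
which combined with Gagliardo--Nirenberg and the order-$\rho^2$ bound on the nonlocal term forces $\|\nabla u\|_{2,\Omega}^2$ either into the ``inner'' region $A$ or into a range on which $\widetilde{J}_1(u)$ strictly exceeds the mountain-pass level; in the former case a strict-convexity (uniqueness) argument valid for $|\alpha|$ small identifies $u=\pm u^0$. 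The hard part will be the careful coordination of the thresholds $\rho_0,\rho_1,\alpha_\rho^\ast$ so that the well geometry of $g$ survives the sign-changing, globally nonlocal logarithmic perturbation; a secondary technical point will be controlling the Lagrange multiplier along the mountain-pass Palais--Smale sequence, which I expect to follow from the Nehari-type identity above together with the a priori bound $\|\nabla u\|_{2,\Omega}^2\le t_\ast^2$ in the local-minimum region.
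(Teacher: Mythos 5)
Your reduction of the corollary to the geometry of Theorem \ref{T1.3} is the correct reading of the paper: with $R=1$ fixed, smallness of $\rho$ plays the role that largeness of $R$ played before, and your computation that the Poincar\'e lower bound $(\lambda_1(\Omega)\rho)^{1/2}$ lies strictly to the left of the maximizer $x_\ast=\left[\frac{p}{(p-2)\rho\mathcal{C}_p}\right]^{\frac{1}{p-4}}$ of $f$ exactly when $\rho<\rho^\ast$ is precisely the paper's condition $R_0<1$ specialized to $R=1$. The local-minimizer half (minimization on the gradient ball, compact embedding $H_0^1(\Omega)\hookrightarrow L^q(\Omega)$, interior infimum once $\rho\le\rho_0$ and $|\alpha|<\alpha_\rho^\ast$ are small) is sound and coincides with Proposition \ref{L4.2} re-run with the roles of $R$ and $\rho$ exchanged.

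The other two parts have genuine gaps, and both trace to the same missing ingredient: the Pohozaev identity (\ref{E24})--(\ref{E25}) on star-shaped domains. For the mountain-pass solution, the Palais--Smale argument is \emph{not} routine in the mass-supercritical constrained setting: on a bounded domain there is no dilation invariance and no Ambrosetti--Rabinowitz-type device on $\mathcal{S}_{\Omega,\rho}$, so boundedness of PS sequences is the central obstruction, not an afterthought; moreover your plan to control the Lagrange multiplier is circular, since the multiplier is estimated in terms of $\Vert\nabla v_n\Vert_2$ (so one needs boundedness of the sequence first), and the a priori bound $\Vert\nabla u\Vert_2\le x_\ast$ holds only in the local-minimum region, which every mountain-pass path must leave. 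This is exactly why the paper runs the monotonicity trick: the perturbed family $\widetilde{J}_{R,s}$, $s\in[1/2,1]$, the theorem of Borthwick--Chang--Jeanjean--Soave (Theorem \ref{L4.5}) yielding \emph{bounded} PS sequences for almost every $s$ (Proposition \ref{L4.6}), and then the limit $s\to 1$ via the uniform bound of Lemma \ref{L4.7}, which itself requires $\int_{\partial\Omega_R}|\nabla u|^2\,x\cdot n\,d\sigma\ge 0$, i.e.\ star-shapedness. For the ground-state property, your Nehari-plus-Gagliardo--Nirenberg dichotomy does not close: on the outer region $\Vert\nabla u\Vert_2>x_\ast$ the lower bound $f(\Vert\nabla u\Vert_2)$ is very negative (indeed $\widetilde{J}_1$ is unbounded below there), so a critical point with energy at most $C_{1,\rho}$ but huge gradient norm cannot be excluded by those two relations, and nothing forces its energy to exceed the mountain-pass level. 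The paper's Lemma \ref{L4.3} excludes it using (\ref{E25}): star-shapedness gives $\Vert u\Vert_{p,\Omega}^p\le\frac{p}{p-2}\left(\Vert\nabla u\Vert_{2,\Omega}^2-\frac{\alpha\rho^2}{4}\right)$ for every solution, hence $\widetilde{J}_1(u)\ge\frac{p-4}{2(p-2)}\Vert\nabla u\Vert_{2,\Omega}^2+\frac{\alpha\rho^2}{4(p-2)}+\frac{\alpha}{4}\rho^2\log 2$, which converts the energy bound $\widetilde{J}_1(u)\le C_{1,\rho}\le\widetilde{J}_1(\psi)=O(\rho)$ into the gradient bound $\Vert\nabla u\Vert_{2,\Omega}^2=O(\rho)\ll x_\ast^2\sim\rho^{-\frac{2}{p-4}}$ for $\rho\le\rho_1$ small. (Your appeal to a ``strict-convexity uniqueness'' step is also unsubstantiated and unnecessary: ground state means least energy among solutions, not uniqueness.) Finally, note that both repairs use star-shapedness of $\Omega$, exactly as in Theorem \ref{T1.3} $(ii)$--$(iii)$; the fact that your proposal never invokes this hypothesis is itself a symptom of what is missing.
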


\begin{theorem}
\label{T1.5} Let $\left( \lambda _{R}^{i},u_{R}^{i}\right) \in \mathbb{R}%
\times H_{0}^{1}(\Omega _{R})$ ($i=0,1$) be solutions to Problem (\ref{E5})
given by Theorem \ref{T1.3}. Then%
\begin{equation*}
\left\Vert \nabla u_{R}^{0}\right\Vert _{2,\Omega _{R}}\rightarrow 0,\lambda
_{R}^{1}\rightarrow \widetilde{\lambda }^{1}\text{ in }\mathbb{R}\text{ and }\ u_{R}^{1}\rightarrow \widetilde{u}^{1}\text{ in }H^{1}(
\mathbb{R}^{2})\text{ as }R\rightarrow \infty ,
\end{equation*}%
where $(\widetilde{\lambda }^{1},\widetilde{u}^{1})$ is the unique solution
to the following problem
\begin{equation*}
\left\{
\begin{array}{ll}
-\Delta u+\lambda u=|u|^{p-2}u, & \ \text{in}\ \mathbb{R}^{2}, \\
\int_{\mathbb{R}^{2}}|u|^{2}dx=\rho , &
\end{array}%
\right.
\end{equation*}%
with $\widetilde{u}^{1}>0$.
\end{theorem}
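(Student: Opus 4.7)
\emph{Proof plan for Theorem \ref{T1.5}.} The strategy treats the two solution branches separately and identifies the limit of the mountain pass branch via uniqueness of the positive ground state of the scalar NLS in $\mathbb{R}^{2}$.

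\emph{Part 1: vanishing of $\|\nabla u_{R}^{0}\|_{2,\Omega_{R}}$.} From the construction in Theorem \ref{T1.3}$(i)$, the minimizer $u_{R}^{0}$ is trapped inside a Gagliardo--Nirenberg barrier $\{u\in\mathcal{S}_{\Omega_{R},\rho} : \|\nabla u\|_{2,\Omega_{R}}\le r_{R}\}$, where the inner radius $r_{R}$ may be tracked through the mass--preserving dilation $v(x):=R^{-1}u(x/R)$, which sends $\mathcal{S}_{\Omega,\rho}$ into $\mathcal{S}_{\Omega_{R},\rho}$ and contracts $\|\nabla v\|_{2}=R^{-1}\|\nabla u\|_{2}$. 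A direct test-function argument then shows that $r_{R}$ can be chosen to tend to $0$ while still enclosing $u_{R}^{0}$, giving $\|\nabla u_{R}^{0}\|_{2,\Omega_{R}}\le r_{R}\to 0$.

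\emph{Part 2: convergence of $(\lambda_{R}^{1},u_{R}^{1})$.} I would first show that the mountain pass value $\widetilde{J}_{R}(u_{R}^{1})$ lies in a fixed positive interval independent of $R$: since $|\alpha|<\alpha_{R,\rho}^{\ast}\to 0$ as $R\to\infty$, the mountain pass of $\widetilde{J}_{R}$ is a small perturbation of the finite, strictly positive mountain pass of the pure NLS on $\mathcal{S}_{\rho}$. Combined with the mass constraint, Gagliardo--Nirenberg yields a uniform $H^{1}(\Omega_{R})$-bound on $u_{R}^{1}$, and testing the Euler--Lagrange equation against $u_{R}^{1}$ produces a uniform bound on $|\lambda_{R}^{1}|$. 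Extending each $u_{R}^{1}$ by zero, one extracts a subsequence with $u_{R}^{1}\rightharpoonup\tilde u^{1}$ in $H^{1}(\mathbb{R}^{2})$ and $\lambda_{R}^{1}\to\tilde\lambda^{1}$.

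\emph{Vanishing of the logarithmic term and identification of the limit.} The key step is to establish $\alpha(\log|\cdot|\ast|u_{R}^{1}|^{2})u_{R}^{1}\to 0$ in $H^{-1}_{\mathrm{loc}}(\mathbb{R}^{2})$ as $R\to\infty$, using the Cingolani--Weth splitting of the logarithmic kernel into its $\log^{+}$ and $\log^{-}$ parts (each controlled on any fixed ball in terms of $\|u_{R}^{1}\|_{H^{1}}\le C$) combined with $|\alpha|\le\alpha_{R,\rho}^{\ast}\to 0$. Passing to the distributional limit in the equation then yields $-\Delta\tilde u^{1}+\tilde\lambda^{1}\tilde u^{1}=|\tilde u^{1}|^{p-2}\tilde u^{1}$ in $\mathbb{R}^{2}$. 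Non-triviality of $\tilde u^{1}$ is forced by the positive lower bound on the mountain pass level; a concentration--compactness analysis (after a possible translation within $\Omega_{R}$) excludes dichotomy and gives mass preservation $\|\tilde u^{1}\|_{2}^{2}=\rho$, hence strong $L^{2}$ and, via norm convergence, strong $H^{1}$ convergence. Positivity $\tilde u^{1}>0$ follows from the freedom $u_{R}^{1}\mapsto|u_{R}^{1}|$ in the mountain pass construction together with the strong maximum principle; Kwong's classical uniqueness of positive mass--constrained solutions of the scalar NLS in $\mathbb{R}^{2}$ then pins down $(\tilde\lambda^{1},\tilde u^{1})$, upgrading subsequential convergence to convergence of the full sequence.

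\emph{Main obstacle.} The hardest part is the loss of compactness on the expanding domain $\Omega_{R}$: one has to rule out vanishing, dichotomy, and escape of mass to infinity for $u_{R}^{1}$. This is delivered by the uniform positive lower bound on the mountain pass value together with the vanishing of the nonlocal correction (through $|\alpha|\le\alpha_{R,\rho}^{\ast}\to 0$), which together force the correct limit profile to emerge.
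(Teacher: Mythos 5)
Your proposal has the right skeleton (treat the two branches separately, kill the logarithmic term, identify the limit by uniqueness of the positive NLS ground state), but two steps that you treat as routine are precisely where the paper has to work hardest, and as written they fail.

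First, the uniform bound on $u_{R}^{1}$. You claim that a bounded mountain-pass level plus the mass constraint plus Gagliardo--Nirenberg gives a uniform $H^{1}(\Omega _{R})$-bound. This is false in the regime $p>4$: by (\ref{E6}) one only has $\Vert u\Vert _{p}^{p}\leq \mathcal{C}_{p}\rho \Vert \nabla u\Vert _{2}^{p-2}$ with $p-2>2$, so bounded energy and fixed mass do not control $\Vert \nabla u\Vert _{2}$ (this is exactly the mass-supercritical obstruction; compare Theorem \ref{T1.1}, where $J$ is unbounded below even on the Pohozaev manifold). The paper obtains the bound in Lemma \ref{L4.7} only because the solutions satisfy the Pohozaev identity (\ref{E24}), whose boundary term has a sign on star-shaped domains (\ref{E25}); star-shapedness is essential here and never enters your argument. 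A related soft spot: you repeatedly invoke ``$|\alpha |\leq \alpha _{R,\rho }^{\ast }\rightarrow 0$'' to make the nonlocal term negligible, but the kernel grows like $\log R$ on $\Omega _{R}$, so $\alpha \rightarrow 0$ alone proves nothing; what is needed, and what the paper establishes in (\ref{E26}), is $\alpha \log (1+R)\rightarrow 0$.

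Second, the compactness step. Your appeal to ``a concentration--compactness analysis excludes dichotomy and gives mass preservation'' is an assertion, not an argument, and on the expanding domains $B_{R}(0)$ it is exactly the hard point: a positive lower bound on the mountain-pass level does not prevent the mass from splitting into several bumps or drifting toward the boundary. The paper's mechanism is quantitative and has no counterpart in your outline: it first proves the matching upper bound $\overline{C}_{R,\rho _{R}}\leq m_{\rho }+o_{R}(1)$ (Lemma \ref{L4.12}), whose proof requires the exponential decay of the limiting ground state (Lemma \ref{L4.11}) and a spliced comparison path; then, writing $v_{n}^{1}=u_{R_{n}}^{1}-v_{0}$ and combining Brezis--Lieb with the Pohozaev identities of $u_{R_{n}}^{1}$ and of the limit equation, any non-vanishing remainder costs at least $\frac{p-4}{2(p-2)}\left( \frac{p}{(p-2)\rho \mathcal{C}_{p}}\right) ^{\frac{2}{p-4}}$ in energy, while $J_{0}(v_{0})\geq m_{\Vert v_{0}\Vert _{2}^{2}}\geq m_{\rho }$ by the scaling law $m_{s\rho }=s^{-\frac{2}{p-4}}m_{\rho }$ of Lemma \ref{L4.9}; the sum then exceeds $m_{\rho }$, contradicting the upper bound. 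Without Lemma \ref{L4.12} (or an equivalent strict-subadditivity estimate) your scheme cannot close. By contrast, your Part 1 is workable in outline: the dilated test function (the paper's $\psi _{R}$) gives $C_{R,\rho }\rightarrow 0$ as in (\ref{E27}), and the Gagliardo--Nirenberg lower bound on the trapping ball, corrected by $\alpha \log (1+R)\rightarrow 0$, forces $\Vert \nabla u_{R}^{0}\Vert _{2,\Omega _{R}}\rightarrow 0$ --- though note the trapping radius $x_{\ast }$ is fixed, not shrinking; what shrinks is the energy level, and the paper gets the same conclusion directly from (\ref{E25})--(\ref{E27}).
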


\begin{remark}
\label{R1-1} $(i)$ By Theorem \ref{T1.3}, Problem (\ref{E5}) admits two
normalized solutions with positive energy on a large bounded domain $\Omega
_{R}$. However, Theorem \ref{T1.5} reveals that extending the domain $\Omega
_{R}$ to the entire space $\mathbb{R}^{2}$ is not a flexible approach to
obtain solutions of Problem (\ref{E4}), which differs from the study in Schr%
\"{o}dinger equations \cite{BQZ}. The primary reason is that the logarithmic
kernel is sign-changing.\newline
$(ii)$ It is noted that if we adopt the proof of Theorem \ref{T1.3} $(i)$ to
establish the existence of local minimizers for Problem (\ref{E4}), it
becomes impossible to appropriately estimate the nonlocal term, thereby
preventing us from finding a suitable ball radius.
\end{remark}

The rest of this paper is organized as follows. Theorem \ref{T1.1} is proven
in Section 2. Theorems \ref{T1.3} and \ref{T1.5} are established in Section
3.

\section{The property of the energy functional $J$ on $\mathcal{P}$}

As presented in \cite{CW2016}, we define the symmetric bilinear forms
\begin{eqnarray*}
&&(u,v)\mapsto \chi _{1}(u,v)=\int_{\mathbb{R}^{2}}\int_{\mathbb{R}^{2}}\log
\left( 1+|x-y|\right) u(x)v(y)dxdy, \\
&&(u,v)\mapsto \chi _{2}(u,v)=\int_{\mathbb{R}^{2}}\int_{\mathbb{R}^{2}}\log
\left( 1+\frac{1}{|x-y|}\right) u(x)v(y)dxdy, \\
&&(u,v)\mapsto \chi _{0}(u,v)=\chi _{1}(u,v)-\chi _{2}(u,v)=\int_{\mathbb{R}%
^{2}}\int_{\mathbb{R}^{2}}\log \left( |x-y|\right) u(x)v(y)dxdy,
\end{eqnarray*}%
and functions on $X$ as follows:
\begin{eqnarray*}
&&\chi _{1}(u^{2},v^{2})=\int_{\mathbb{R}^{2}}\int_{\mathbb{R}^{2}}\log
\left( 1+|x-y|\right) |u(x)|^{2}|v(y)|^{2}dxdy \\
&&\chi _{2}(u^{2},v^{2})=\int_{\mathbb{R}^{2}}\int_{\mathbb{R}^{2}}\log
\left( 1+\frac{1}{|x-y|}\right) |u(x)|^{2}|v(y)|^{2}dxdy.
\end{eqnarray*}%
Note that $\chi _{0}(u^{2},v^{2})=\chi _{1}(u^{2},v^{2})-\chi
_{2}(u^{2},v^{2})$ and it follows from the Hardy-Littlewood Sobolev
inequality \cite{L01} that there exists a constant $\mathcal{C}>0$ such that
\begin{equation}
|\chi _{2}(u^{2},u^{2})|\leq \mathcal{C}\left\Vert u\right\Vert _{8/3}^{4}
\label{E7}
\end{equation}%
for all $u\in L^{8/3}(\mathbb{R}^{2}).$

For any $u\in \mathcal{S}_{\rho }$ and $t>0$, we define $h_{u}(t):\mathbb{R}%
^{+}\rightarrow \mathbb{R}$ by%
\begin{eqnarray*}
h_{u}(t)=J(u_{t}) &=&\frac{1}{2}t^{2}\int_{\mathbb{R}^{2}}|\nabla u|^{2}dx+%
\frac{\alpha }{4}\int_{\mathbb{R}^{2}}\log \left( |x-y|\right)
|u(x)|^{2}|u(y)|^{2}dxdy \\
&&-\frac{\alpha \rho ^{2}}{4}\log t-\frac{\beta }{p}t^{p-2}\int_{\mathbb{R}%
^{2}}|u|^{p}dx.
\end{eqnarray*}%
Then we have the following lemma.

\begin{lemma}
\label{L3.1} Let $\alpha <0,\beta >0,p>4,$ and $\rho >0$. For any $u\in
\mathcal{S}_{\rho }$, there exists a unique $t_{u}>0$ such that $%
u_{t_{u}}\in \mathcal{P}$, and $t_{u}$ is a strict global maximum point of $%
h_{u}$. Furthermore, $h_{u}^{\prime }\left( t_{u}\right) =\frac{P\left(
u_{t_{u}}\right) }{t_{u}}=0$, $h_{u}^{\prime }(t)>0$ for $0<t<t_{u},$ and $%
h_{u}^{\prime }(t)<0$ for $t>t_{u}$, where
\begin{equation*}
P(u):=\Vert \nabla u\Vert _{2}^{2}-\frac{\beta (p-2)}{p}\Vert u\Vert
_{p}^{p}-\frac{\alpha \rho ^{2}}{4}.
\end{equation*}
\end{lemma}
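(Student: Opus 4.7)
The plan is to analyse $h_u$ as an explicit scalar function of $t>0$. After establishing the scaling identity $t\,h_u'(t)=P(u_t)$, the condition $u_{t_u}\in\mathcal{P}$ becomes a critical point equation which I reduce to the balance of a decreasing and an increasing function of $t$, yielding a unique solution.

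First I would record how $J$ transforms under the scaling $u\mapsto u_t(x):=tu(tx)$. A direct change of variable gives $\|u_t\|_2=\|u\|_2$ (so $u_t\in\mathcal{S}_\rho$), $\|\nabla u_t\|_2^2=t^2\|\nabla u\|_2^2$, $\|u_t\|_p^p=t^{p-2}\|u\|_p^p$, and
$$\chi_0(u_t^2,u_t^2)=\chi_0(u^2,u^2)-\rho^2\log t.$$
Substituting into $J(u_t)$ recovers the displayed formula for $h_u(t)$ and, after differentiating and multiplying by $t$, gives
$$t\,h_u'(t)=t^2\|\nabla u\|_2^2-\frac{\beta(p-2)}{p}t^{p-2}\|u\|_p^p-\frac{\alpha\rho^2}{4}=P(u_t).$$
Hence $t_u>0$ satisfies $u_{t_u}\in\mathcal{P}$ if and only if $h_u'(t_u)=0$.

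Next, to pin down the critical points, I would rewrite $h_u'(t)=0$, after multiplying by $t$ and isolating the nonlinear term, as
$$\|\nabla u\|_2^2+\frac{(-\alpha)\rho^2}{4t^2}\;=\;\frac{\beta(p-2)}{p}\,t^{p-4}\|u\|_p^p.$$
Since $\alpha<0$, the left-hand side is strictly decreasing in $t$, running continuously from $+\infty$ (as $t\to 0^+$) down to $\|\nabla u\|_2^2$ (as $t\to\infty$); since $\beta>0$ and $p>4$, the right-hand side is strictly increasing from $0$ to $+\infty$. Thus the two curves cross at exactly one point $t_u\in(0,\infty)$, with LHS exceeding RHS for $t<t_u$ and lying below RHS for $t>t_u$. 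Translating back via $h_u'(t)=t\,[\text{LHS}-\text{RHS}]$, this gives $h_u'(t)>0$ on $(0,t_u)$ and $h_u'(t)<0$ on $(t_u,\infty)$, so $t_u$ is the unique critical point of $h_u$ and a strict global maximum.

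The lemma is essentially a one-variable calculus exercise, so no single step is a serious obstacle. The minor conceptual point is to notice that, although $h_u$ itself is far from monotone and has the competing quadratic, logarithmic and $L^p$ pieces, the derivative equation can be split cleanly into a monotone-decreasing piece (carrying both $\|\nabla u\|_2^2$ and the $(-\alpha)/t^2$ term coming from the log convolution) set against a monotone-increasing piece (the $t^{p-4}\|u\|_p^p$ term, using $p>4$); this is the reason uniqueness of $t_u$ and the max property come out without further technicalities.
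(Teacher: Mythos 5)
Your proposal is correct and follows essentially the same route as the paper: both compute $h_u'(t)$ explicitly, use the identity $t\,h_u'(t)=P(u_t)$, and conclude that $h_u'$ has a unique zero $t_u$ with $h_u'>0$ on $(0,t_u)$ and $h_u'<0$ on $(t_u,\infty)$. Your rewriting of $h_u'(t)=0$ as the crossing of a strictly decreasing curve $\Vert\nabla u\Vert_2^2+\frac{(-\alpha)\rho^2}{4t^2}$ with a strictly increasing curve $\frac{\beta(p-2)}{p}t^{p-4}\Vert u\Vert_p^p$ in fact supplies, cleanly, the uniqueness and sign-change details that the paper merely asserts after displaying (\ref{E8}).
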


\begin{proof}
A direct calculation shows that
\begin{eqnarray}
h_{u}^{\prime }(t) &=&t\int_{\mathbb{R}^{2}}|\nabla u|^{2}dx-\frac{\alpha
\rho ^{2}}{4t}-\frac{\beta (p-2)}{p}t^{p-3}\int_{\mathbb{R}^{2}}|u|^{p}dx
\label{E8} \\
&=&\frac{1}{t}\left( t^{2}\int_{\mathbb{R}^{2}}|\nabla u|^{2}dx-\frac{\alpha
\rho ^{2}}{4}-\frac{\beta (p-2)}{p}t^{p-2}\int_{\mathbb{R}%
^{2}}|u|^{p}dx\right) .  \notag
\end{eqnarray}%
Obviously, $h_{u}^{\prime }(t)=\frac{P\left( u_{t}\right) }{t}$. By (\ref{E8}%
), there exists $t_{u}>0$ such that $h_{u}^{\prime }(t)$ has a unique zero
point $t_{u}$. Moreover, $h_{u}^{\prime }(t)>0$ for $0<t<t_{u},$ and $%
h_{u}^{\prime }(t)<0$ for $t>t_{u}$. The proof is completed.
\end{proof}

Let $\Omega $ be a bounded domain in $\mathbb{R}^{2}$ satisfying $%
\max_{x,y\in \Omega }|x-y|=1$ and $\mathcal{S}_{\Omega ,\rho }=\mathcal{S}%
_{\rho }|_{\Omega }.$ Then we prove the following result.

\begin{theorem}
\label{L3.2} Let $\alpha <0,\beta >0,p>4$ and $\rho >0$. It holds that $%
M_{\rho }=-\infty $ and $\overline{M}_{\rho }=+\infty .$
\end{theorem}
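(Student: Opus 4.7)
The plan is to construct two trial sequences in $\mathcal{S}_\rho$ and push them onto $\mathcal{P}$ via Lemma~\ref{L3.1}. Recall that Lemma~\ref{L3.1} provides, for each $u\in\mathcal{S}_\rho$, a unique $t_u>0$ with $u_{t_u}\in\mathcal{P}$ and the identity $J(u_{t_u})=h_u(t_u)=\max_{t>0}h_u(t)\geq h_u(1)=J(u)$. Thus to get $\overline{M}_\rho=+\infty$ it suffices to find $u_n\in\mathcal{S}_\rho$ with $J(u_n)\to+\infty$; for $M_\rho=-\infty$ I would instead compute $h_{w_n}(t_{w_n})$ directly along a tailored sequence.

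For $M_\rho=-\infty$, fix $\phi\in C_c^\infty(\mathbb{R}^2)$ with $\|\phi\|_2^2=\rho/2$ and $\operatorname{supp}\phi\subset B(0,1)$, and set $w_n(x)=\phi(x-y_n)+\phi(x+y_n)$ with $|y_n|\to\infty$ so that the two bumps have disjoint supports. Disjointness gives $\|w_n\|_2^2=\rho$, $\|\nabla w_n\|_2^2=2\|\nabla\phi\|_2^2$, and $\|w_n\|_p^p=2\|\phi\|_p^p$, all independent of $n$; hence the defining equation of $t_u$ in Lemma~\ref{L3.1} forces $t_{w_n}$ to equal a single constant $t_0>0$. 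The cross term in $\chi_0(w_n^2,w_n^2)$, after translating each factor, equals $2\int\!\int\log|x-y+2y_n|\,\phi^2(x)\phi^2(y)\,dx\,dy=\tfrac{\rho^2}{2}\log(2|y_n|)+O(1)$ by expanding $\log|x-y+2y_n|$ around $2y_n$ on the bounded supports of $\phi$. Thus $\chi_0(w_n^2,w_n^2)\to+\infty$; since $\alpha<0$ and all other contributions to $h_{w_n}(t_0)$ are $O(1)$, I conclude $J((w_n)_{t_0})\to-\infty$.

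For $\overline{M}_\rho=+\infty$, I use fast oscillations to inflate the kinetic energy while keeping $\|u\|_p$ and $\chi_0(u^2,u^2)$ bounded. Fix $\chi\in C_c^\infty(\mathbb{R}^2)$, set $v_n(x)=A\sin(n\pi x_1)\chi(x)$ with $A=\sqrt{2\rho/\|\chi\|_2^2}$, and renormalize $u_n:=v_n\sqrt{\rho/\|v_n\|_2^2}\in\mathcal{S}_\rho$ (the corrective factor tends to $1$ by the averaging of $\sin^2$). Direct calculation gives $\|\nabla u_n\|_2^2\sim\tfrac{A^2\pi^2 n^2}{2}\|\chi\|_2^2\to\infty$, while $\|u_n\|_p^p$ stays bounded and $\chi_0(u_n^2,u_n^2)$ stays bounded: writing $v_n^2=\tfrac{A^2\chi^2}{2}-\tfrac{A^2\chi^2}{2}\cos(2n\pi x_1)$ reduces $\chi_0(v_n^2,v_n^2)$ to $\chi_0\bigl(\tfrac{A^2\chi^2}{2},\tfrac{A^2\chi^2}{2}\bigr)$ plus oscillatory integrals of the form $\int\!\int\log|x-y|\chi^2(x)\chi^2(y)\cos(2n\pi(x_1\pm y_1))\,dx\,dy$, which vanish as $n\to\infty$ by Riemann--Lebesgue since the kernel $\log|x-y|\chi^2(x)\chi^2(y)$ lies in $L^1(\mathbb{R}^2\times\mathbb{R}^2)$ by the 2D local integrability of $\log|x-y|$ and the compact support of $\chi$. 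Consequently $J(u_n)\to+\infty$, and the inequality $J((u_n)_{t_{u_n}})\geq J(u_n)$ yields $\overline{M}_\rho=+\infty$. The main technical point is precisely this Riemann--Lebesgue step with the mildly singular logarithmic kernel; on the $M_\rho$ side, the only delicate step is the asymptotic for the cross term, which is routine once one works on the bounded supports of $\phi$.
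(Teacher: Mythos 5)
Your proof is correct, and the two halves compare differently with the paper. For $M_{\rho }=-\infty $ your argument is essentially the paper's: it also takes two bumps of mass $\rho /2$ (rescaled Dirichlet eigenfunctions rather than your cut-off $\phi $) translated a distance $n^{2}$ apart, notes that the projection parameter onto $\mathcal{P}$ is independent of $n$ because $P$ involves only $\Vert \nabla u\Vert _{2}$, $\Vert u\Vert _{p}$ and $\rho $, and lets the logarithmic cross term of order $\tfrac{\rho ^{2}}{2}\log (\text{separation})$ blow up, which with $\alpha <0$ forces $J\rightarrow -\infty $ along $\mathcal{P}$. For $\overline{M}_{\rho }=+\infty $, however, your route is genuinely different. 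The paper builds an $n$-bump profile $W_{n}$ whose kinetic and $L^{p}$ terms both scale like $n^{\frac{p-2}{p-4}}$, so $J(W_{n})$ itself need not blow up; it must therefore project at an explicit parameter $t_{1,n}\geq n^{-1/(p-4)}$ and verify that the projected kinetic term (of order $n$) dominates the logarithmic contributions (of order $\log n$). You instead exploit the structural consequence of Lemma \ref{L3.1} that projection onto $\mathcal{P}$ can only raise the energy, $J(u_{t_{u}})=\max_{t>0}h_{u}(t)\geq h_{u}(1)=J(u)$, which reduces the claim to exhibiting any sequence in $\mathcal{S}_{\rho }$ with $J\rightarrow +\infty $; your oscillating functions $A\sin (n\pi x_{1})\chi $ achieve this because the kinetic term grows like $n^{2}$ while $\Vert u_{n}\Vert _{p}^{p}$ is bounded pointwise and $\chi _{0}(u_{n}^{2},u_{n}^{2})$ stays bounded by Riemann--Lebesgue, the kernel $\log |x-y|\,\chi ^{2}(x)\chi ^{2}(y)$ being in $L^{1}$ by compact support and the local integrability of the logarithm in dimension two. (One small point: besides the terms with $\cos (2n\pi (x_{1}\pm y_{1}))$ you also need the single-frequency cross terms $\int \!\int \log |x-y|\,\chi ^{2}(x)\chi ^{2}(y)\cos (2n\pi y_{1})\,dx\,dy\rightarrow 0$, which follow by the same argument.) Your reduction makes the sup part shorter and more elementary, with no multi-bump bookkeeping and no choice of projection time; what the paper's construction buys is an explicit multibump description of where on $\mathcal{P}$ the energy becomes large, though nothing in the statement of Theorem \ref{L3.2} requires that.
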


\begin{proof}
Let $\psi \in \mathcal{S}_{\Omega ,\rho }$ be the positive normalized
eigenfunction $-\Delta $ with Dirichlet boundary condition in $\Omega $
associated to $\lambda _{1}(\Omega )$. Then
\begin{equation}
\Vert \nabla \psi \Vert _{2}^{2}=\lambda _{1}\left( \Omega \right) \rho ,\
\Vert \psi \Vert _{2}^{2}=\rho \ \text{and }\Vert \psi \Vert _{p}^{p}\leq
\mathcal{C}_{p}\left[ \lambda _{1}\left( \Omega \right) \rho \right]
^{(p-2)/2}\rho .  \label{E9}
\end{equation}%
Set $V_{n}:=\sum_{i=1}^{2}2^{\frac{1}{p-4}}\psi \left( 2^{\frac{p-2}{2(p-4)}%
}\left( x+in^{2}e\right) \right) $ for $n\in
\mathbb{N}
$. Then it follows from (\ref{E9}) that
\begin{eqnarray}
&&\int_{\mathbb{R}^{2}}|V_{n}|^{2}dx=\rho ,\ \int_{\mathbb{R}^{2}}|\nabla
V_{n}|^{2}dx=2^{\frac{p-2}{p-4}}\int_{\mathbb{R}^{2}}|\nabla \psi |^{2}dx=2^{%
\frac{p-2}{p-4}}\lambda _{1}\left( \Omega \right) \rho ,  \notag \\
&&\int_{\mathbb{R}^{2}}|V_{n}|^{p}dx=2^{\frac{p-2}{p-4}}\int_{\mathbb{R}%
^{2}}|\psi |^{p}dx\leq 2^{\frac{p-2}{p-4}}\mathcal{C}_{p}\left[ \lambda
_{1}\left( \Omega \right) \rho \right] ^{(p-2)/2}\rho ,  \label{E10}
\end{eqnarray}%
and
\begin{eqnarray}
&&\int_{\mathbb{R}^{2}}\int_{\mathbb{R}^{2}}\log \left( |x-y|\right)
|V_{n}(x)|^{2}|V_{n}(y)|^{2}dxdy  \notag \\
&=&2^{-2}\int_{\mathbb{R}^{2}}\int_{\mathbb{R}^{2}}\log \left( |x-y|\right)
|\sum_{i=1}^{2}\psi \left( x+in^{2}e\right) |^{2}|\sum_{j=1}^{2}\psi \left(
y+jn^{2}e\right) |^{2}dxdy  \notag \\
&&-\frac{p-2}{2(p-4)}\rho ^{2}\log 2  \notag \\
&=&2^{-1}\int_{\mathbb{R}^{2}}\int_{\mathbb{R}^{2}}\log \left( |x-y|\right)
|\psi (x)|^{2}|\psi (y)|^{2}dxdy-\frac{p-2}{2(p-4)}\rho ^{2}\log 2  \notag \\
&&+2^{-2}\sum_{i\neq j}^{2}\int_{\mathbb{R}^{2}}\int_{\mathbb{R}^{2}}\log
\left( |x-y|\right) |\psi (x+in^{2}e)|^{2}|\psi (y+jn^{2}e)|^{2}dxdy  \notag
\\
&=&2^{-1}\int_{\mathbb{R}^{2}}\int_{\mathbb{R}^{2}}\log \left( |x-y|\right)
|\psi (x)|^{2}|\psi (y)|^{2}dxdy-\frac{p-2}{2(p-4)}\rho ^{2}\log 2  \notag \\
&&+2^{-2}\sum_{i\neq j}^{2}\int_{\mathbb{R}^{2}}\int_{\mathbb{R}^{2}}\log
\left( |x-y+(j-i)n^{2}e|\right) |\psi (x)|^{2}|\psi (y)|^{2}dxdy  \notag \\
&\geq &2^{-1}\int_{\mathbb{R}^{2}}\int_{\mathbb{R}^{2}}\log \left(
|x-y|\right) |\psi (x)|^{2}|\psi (y)|^{2}dxdy+\frac{\log \left(
n^{2}-1\right) \rho ^{2}}{2}  \notag \\
&&-\frac{p-2}{2(p-4)}\rho ^{2}\log 2  \notag \\
&\geq &-2^{-1}\mathcal{C}\mathcal{C}_{8/3}^{3/2}(\lambda _{1}(\Omega
))^{1/2}\rho ^{2}+\frac{\log \left( n^{2}-1\right) \rho ^{2}}{2}-\frac{p-2}{%
2(p-4)}\rho ^{2}\log 2.  \label{E11}
\end{eqnarray}%
Using (\ref{E10}), we have
\begin{equation*}
P\left( (V_{n})_{t}\right) =2^{\frac{p-2}{p-4}}\left( t^{2}\int_{\mathbb{R}%
^{2}}|\nabla \psi |^{2}dx-\frac{p-2}{p}t^{p-2}\int_{\mathbb{R}^{2}}|\psi
|^{p}dx-2^{-\frac{p-2}{p-4}}\frac{\alpha \rho ^{2}}{4}\right) \text{ for }%
t>0.
\end{equation*}%
By Lemma \ref{L3.1}, there exists $t_{0}>0$ independent of $n$ such that $%
P\left( (V_{n})_{t_{0}}\right) =0,$ which shows that $(V_{n})_{t_{0}}\in
\mathcal{P}$. Thus it follows from (\ref{E10})--(\ref{E11}) that
\begin{eqnarray*}
&&J\left( (V_{n})_{t_{0}}\right) \\
&=&\frac{p-4}{2(p-2)}\int_{\mathbb{R}^{2}}|\nabla (V_{n})_{t_{0}}|^{2}dx+%
\frac{\alpha }{4}\int_{\mathbb{R}^{2}}\int_{\mathbb{R}^{2}}\log \left(
|x-y|\right) |(V_{n}(x))_{t_{0}}|^{2}|(V_{n}(y))_{t_{0}}|^{2}dxdy \\
&&+\frac{\alpha \rho ^{2}}{4(p-2)} \\
&=&\frac{(p-4)t_{0}^{2}}{2(p-2)}\int_{\mathbb{R}^{2}}|\nabla V_{n}|^{2}dx+%
\frac{\alpha }{4}\int_{\mathbb{R}^{2}}\int_{\mathbb{R}^{2}}\log \left(
|x-y|\right) |V_{n}(x)|^{2}|V_{n}(y)|^{2}dxdy \\
&&-\frac{\alpha \rho ^{2}}{4}\log t_{0}+\frac{\alpha \rho ^{2}}{4(p-2)} \\
&\leq &\frac{2^{\frac{2}{p-4}}(p-4)t_{0}^{2}}{p-2}\int_{\mathbb{R}%
^{2}}|\nabla \psi |^{2}dx+\frac{\alpha }{8}\int_{\mathbb{R}^{2}}\int_{%
\mathbb{R}^{2}}\log \left( |x-y|\right) |\psi (x)|^{2}|\psi (y)|^{2}dxdy \\
&&-\frac{\alpha }{4}\rho ^{2}\log t_{0}+\frac{\alpha \rho ^{2}}{4(p-2)}+%
\frac{\alpha \log \left( n^{2}-1\right) \rho ^{2}}{8}-\frac{\alpha (p-2)}{%
8(p-4)}\rho ^{2}\log 2 \\
&\leq &\frac{2^{\frac{2}{p-4}}(p-4)}{p-2}t_{0}^{2}\lambda _{1}(\Omega )\rho -%
\frac{\alpha }{8}\mathcal{C}\mathcal{C}_{8/3}^{3/2}(\lambda _{1}(\Omega
))^{1/2}\rho ^{2} \\
&&-\frac{\alpha }{4}\rho ^{2}\log t_{0}+\frac{\alpha \rho ^{2}}{4(p-2)}+%
\frac{\alpha \log \left( n^{2}-1\right) \rho ^{2}}{8}-\frac{\alpha (p-2)}{%
8(p-4)}\rho ^{2}\log 2 \\
&\rightarrow &-\infty \ \text{as}\ n\rightarrow \infty ,
\end{eqnarray*}%
leading to $M_{\rho }=-\infty $.

Next we claim that $\overline{M}_{\rho }=+\infty $. Set $W_{n}:=%
\sum_{i=1}^{n}n^{\frac{1}{p-4}}\psi \left( n^{\frac{p-2}{2(p-4)}}\left(
x+in^{2}e\right) \right) $ for $n\in
\mathbb{N}
$. Then for $t>0,$ by (\ref{E9}) one has
\begin{eqnarray}
&&\int_{\mathbb{R}^{2}}|\left( W_{n}\right) _{t}|^{2}dx=\int_{\mathbb{R}%
^{2}}|\psi |^{2}dx=\rho ,  \notag \\
&&\int_{\mathbb{R}^{2}}|\nabla \left( W_{n}\right) _{t}|^{2}dx=n^{\frac{p-2}{%
p-4}}t^{2}\int_{\mathbb{R}^{2}}|\nabla \psi |^{2}dx=n^{\frac{p-2}{p-4}%
}t^{2}\lambda _{1}(\Omega )\rho ,  \notag \\
&&\int_{\mathbb{R}^{2}}|\left( W_{n}\right) _{t}|^{p}dx=n^{\frac{p-2}{p-4}%
}t^{p-2}\int_{\mathbb{R}^{2}}|\psi |^{p}dx\leq n^{\frac{p-2}{p-4}}t^{2}%
\mathcal{C}_{p}\left[ \lambda _{1}\left( \Omega \right) \rho \right]
^{(p-2)/2}\rho  \label{E12}
\end{eqnarray}%
and
\begin{eqnarray}
&&\int_{\mathbb{R}^{2}}\int_{\mathbb{R}^{2}}\log (|x-y|)|\left(
W_{n}(x)\right) _{t}|^{2}|\left( W_{n}(y)\right) _{t}|^{2}dxdy  \notag \\
&=&\int_{\mathbb{R}^{2}}\int_{\mathbb{R}^{2}}\log
(|x-y|)|W_{n}(x)|^{2}|W_{n}(y)|^{2}dxdy-\left( \int_{\mathbb{R}%
^{2}}|W_{n}|^{2}dx\right) ^{2}\log t  \notag \\
&=&n^{-2}\int_{\mathbb{R}^{2}}\int_{\mathbb{R}^{2}}\log
(|x-y|)|\sum_{i=1}^{n}\psi \left( x+in^{2}e\right) |^{2}|\sum_{j=1}^{n}\psi
\left( y+jn^{2}e\right) |^{2}dxdy  \notag \\
&&-\frac{p-2}{2(p-4)}\rho ^{2}\log n-\rho ^{2}\log t  \notag \\
&=&n^{-1}\int_{\mathbb{R}^{2}}\int_{\mathbb{R}^{2}}\log (|x-y|)|\psi
(x)|^{2}|\psi (y)|^{2}dxdy  \notag \\
&&+n^{-2}\sum_{i\neq j}^{n}\int_{\mathbb{R}^{2}}\int_{\mathbb{R}^{2}}\log
(|x-y+(j-i)n^{2}e|)|\psi (x)|^{2}|\psi (y)|^{2}dxdy  \notag \\
&&-\frac{p-2}{2(p-4)}\rho ^{2}\log n-\rho ^{2}\log t  \notag \\
&\leq &n^{-1}\int_{\mathbb{R}^{2}}\int_{\mathbb{R}^{2}}\log (|x-y|)|\psi
(x)|^{2}|\psi (y)|^{2}dxdy  \notag \\
&&+\rho ^{2}\log (n^{3}+1)\frac{n^{2}-n}{n^{2}}-\frac{p-2}{2(p-4)}\rho
^{2}\log n-\rho ^{2}\log t  \notag \\
&\leq &n^{-1}\rho ^{2}\log 3+\rho ^{2}\log (n^{3}+1)-\frac{p-2}{2(p-4)}\rho
^{2}\log n-\rho ^{2}\log t.  \label{E13}
\end{eqnarray}%
It follows from (\ref{E12}) that
\begin{eqnarray*}
P\left( \left( W_{n}\right) _{n^{-1/(p-4)}}\right) &=&n\int_{\mathbb{R}%
^{2}}|\nabla \psi |^{2}dx-\frac{p-2}{p}\int_{\mathbb{R}^{2}}|\psi |^{p}dx+%
\frac{\alpha \rho ^{2}}{4} \\
&\geq &n\lambda _{1}(\Omega )\rho -\frac{p-2}{p}\mathcal{C}_{p}\left[
\lambda _{1}\left( \Omega \right) \rho \right] ^{(p-2)/2}\rho +\frac{\alpha
\rho ^{2}}{4} \\
&\geq &0,
\end{eqnarray*}%
for $n>0$ large enough. Thus, combining with Lemma \ref{L3.1}, there exists $%
t_{1,n}\geq n^{-1/(p-4)}$ such that $P\left( \left( W_{n}\right)
_{t_{1,n}}\right) =0$. Using this, together with (\ref{E12})--(\ref{E13}),
yields
\begin{eqnarray*}
&&J\left( \left( W_{n}\right) _{t_{1,n}}\right) \\
&=&\frac{p-4}{2(p-2)}\int_{\mathbb{R}^{2}}|\nabla (W_{n})_{t_{1,n}}|^{2}dx+%
\frac{\alpha }{4}\int_{\mathbb{R}^{2}}\int_{\mathbb{R}^{2}}\log \left(
|x-y|\right) |(W_{n}(x))_{t_{1,n}}|^{2}|(W_{n}(y))_{t_{1,n}}|^{2}dxdy \\
&&+\frac{\alpha \rho ^{2}}{4(p-2)} \\
&\geq &\frac{p-4}{2(p-2)}n^{\frac{p-2}{p-4}}t_{1,n}^{2}\int_{\mathbb{R}%
^{2}}|\nabla \psi |^{2}dx+\frac{\alpha \rho ^{2}}{4(p-2)} \\
&&+\frac{\alpha }{4}n^{-1}\int_{\mathbb{R}^{2}}\int_{\mathbb{R}^{2}}\log
(|x-y|)|\psi (x)|^{2}|\psi (y)|^{2}dxdy \\
&&+\frac{\alpha }{4}\rho ^{2}\log (n^{3}+1)\frac{(n^{2}-n)}{n^{2}}-\frac{%
\alpha }{4}\left( \frac{p-2}{2(p-4)}\rho ^{2}\log n+\rho ^{2}\log
t_{1,n}\right) \\
&\geq &\frac{p-4}{2(p-2)}\lambda _{1}(\Omega )\rho n+\frac{\alpha \rho ^{2}}{%
4(p-2)}+\frac{\alpha }{4}\rho ^{2}\log (n^{3}+1)+\frac{\alpha }{4}n^{-1}\rho
^{2}\log 3 \\
&\rightarrow &+\infty \ \text{as}\ n\rightarrow \infty ,
\end{eqnarray*}%
which shows that $\overline{M}_{\rho }=+\infty $. The proof is complete.
\end{proof}

\textbf{Proof of Theorem \ref{T1.1}:} It is a direct
conclusion of Theorem \ref{L3.2}.

\section{The existence and asymptotic behavior of solutions}

\subsection{The local minimizer}

By the Gagliardo-Nirenberg and the H\"{o}lder equalities, we have
\begin{equation*}
\widetilde{J}_{R}(u)\geq \frac{1}{2}\Vert \nabla u\Vert _{2}^{2}-\frac{1}{p}%
\mathcal{C}_{p}\Vert \nabla u\Vert _{2}^{p-2}\rho +\frac{\alpha \rho ^{2}}{4}%
\log (1+R).
\end{equation*}%
Define a function $f:\mathbb{R}^{+}\rightarrow \mathbb{R}$ given by%
\begin{equation}
f(x)=\frac{1}{2}x^{2}-\frac{1}{p}\mathcal{C}_{p}x^{p-2}\rho +\frac{\alpha
\rho ^{2}}{4}\log (1+R)\text{ for }x>0.  \label{E14}
\end{equation}%
Set
\begin{equation*}
R_{0}:=(\lambda _{1}(\Omega )\rho )^{1/2}\left( \frac{(p-2)\rho \mathcal{C}%
_{p}}{p}\right) ^{\frac{1}{p-4}}
\end{equation*}%
and
\begin{equation*}
\alpha _{R,\rho }^{0}:=\frac{2R^{-2}\lambda _{1}(\Omega )-\frac{4}{p}%
\mathcal{C}_{p}\left( R^{-2}\lambda _{1}(\Omega )\rho \right) ^{(p-2)/2}}{%
\rho \log (1+R)}.
\end{equation*}%
Then we have the following lemma.

\begin{lemma}
\label{L4.1}Let $\rho >0$, $R>R_{0}$ and $0<|\alpha |<\alpha _{R,\rho }^{0}$%
. Then the function $f$ has a global strict maximum point $x_{\ast
}>R^{-1}\left( \lambda _{1}(\Omega )\rho \right) ^{1/2}$ satisfying $f\left(
x_{\ast }\right) >f\left( R^{-1}\left( \lambda _{1}(\Omega )\rho \right)
^{1/2}\right) >0$. Moreover, $f$ is increasing on $0<x<x_{\ast }$ and is
decreasing on $x>x_{\ast }$.
\end{lemma}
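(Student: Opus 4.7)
The plan is to treat this as a one-variable calculus problem on the explicit function $f$. Since $p>4$, differentiating $f$ yields
\[
f'(x)=x-\frac{p-2}{p}\mathcal{C}_{p}\rho\,x^{p-3}=x\left(1-\frac{p-2}{p}\mathcal{C}_{p}\rho\,x^{p-4}\right),
\]
which has a unique positive zero $x_{\ast}=\bigl(\tfrac{p}{(p-2)\mathcal{C}_{p}\rho}\bigr)^{1/(p-4)}$. Since $f'(x)>0$ for $0<x<x_{\ast}$ and $f'(x)<0$ for $x>x_{\ast}$, $f$ is strictly increasing on $(0,x_{\ast})$ and strictly decreasing on $(x_{\ast},\infty)$, so $x_{\ast}$ is the unique global strict maximum point. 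This already delivers the monotonicity and maximality claims of the lemma with no further work.

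Next I would verify the location condition $x_{\ast}>y_{0}:=R^{-1}(\lambda_{1}(\Omega)\rho)^{1/2}$. Raising both sides to the power $p-4>0$, this is equivalent to
\[
\frac{p}{(p-2)\mathcal{C}_{p}\rho}>R^{-(p-4)}(\lambda_{1}(\Omega)\rho)^{(p-4)/2},
\]
which rearranges to $R>(\lambda_{1}(\Omega)\rho)^{1/2}\bigl(\tfrac{(p-2)\rho\mathcal{C}_{p}}{p}\bigr)^{1/(p-4)}=R_{0}$. Thus the hypothesis $R>R_{0}$ is precisely what is needed. Since $f$ is increasing on $(0,x_{\ast})$ and $y_{0}<x_{\ast}$, the strict inequality $f(x_{\ast})>f(y_{0})$ follows immediately from monotonicity.

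It remains to establish $f(y_{0})>0$. Substituting $y_{0}^{2}=R^{-2}\lambda_{1}(\Omega)\rho$ and $y_{0}^{p-2}=R^{-(p-2)}(\lambda_{1}(\Omega)\rho)^{(p-2)/2}$ into the definition of $f$, the inequality $f(y_{0})>0$ becomes
\[
\frac{\rho}{2}R^{-2}\lambda_{1}(\Omega)-\frac{\rho^{2}}{p}\mathcal{C}_{p}\bigl(R^{-2}\lambda_{1}(\Omega)\rho\bigr)^{(p-2)/2}\cdot\frac{1}{\rho}>-\frac{\alpha\rho^{2}}{4}\log(1+R)=\frac{|\alpha|\rho^{2}}{4}\log(1+R),
\]
and a direct rearrangement shows this is equivalent to $|\alpha|<\alpha_{R,\rho}^{0}$, which is the assumed hypothesis. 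A small side check is that $\alpha_{R,\rho}^{0}>0$ under $R>R_{0}$, i.e.\ that the numerator in the definition of $\alpha_{R,\rho}^{0}$ is positive; this reduces to $R^{p-4}>\tfrac{2\mathcal{C}_{p}\rho(\lambda_{1}(\Omega)\rho)^{(p-4)/2}}{p}$, which is implied by $R>R_{0}$ since $(p-2)/p>2/p$.

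I do not expect a real obstacle in this lemma: the only substantive work is the bookkeeping that confirms the hypotheses $R>R_{0}$ and $|\alpha|<\alpha_{R,\rho}^{0}$ are equivalent, respectively, to $x_{\ast}>y_{0}$ and $f(y_{0})>0$. The cleanest order is therefore to (i) identify $x_{\ast}$ and its monotonicity structure from $f'$, (ii) translate $R>R_{0}$ into $x_{\ast}>y_{0}$, and (iii) translate $|\alpha|<\alpha_{R,\rho}^{0}$ into $f(y_{0})>0$, at which point the chain $f(x_{\ast})>f(y_{0})>0$ is immediate.
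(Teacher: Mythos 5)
Your proposal is correct and follows essentially the same route as the paper: compute $f'$, identify $x_{\ast}=\bigl(\tfrac{p}{(p-2)\mathcal{C}_{p}\rho}\bigr)^{1/(p-4)}$ as the unique critical point giving the increasing/decreasing structure, use $R>R_{0}$ to place $x_{\ast}$ above $R^{-1}(\lambda_{1}(\Omega)\rho)^{1/2}$, and use $|\alpha|<\alpha_{R,\rho}^{0}$ to get $f\bigl(R^{-1}(\lambda_{1}(\Omega)\rho)^{1/2}\bigr)>0$. If anything, your writeup is more complete than the paper's, since you spell out the algebraic equivalences and verify that $\alpha_{R,\rho}^{0}>0$ when $R>R_{0}$, a point the paper leaves implicit.
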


\begin{proof}
By calculating, we have
\begin{equation*}
f^{\prime }(x)=x-\frac{p-2}{p}\mathcal{C}_{p}x^{p-3}\rho ,
\end{equation*}%
which implies that $x_{\ast }:=\left[ \frac{p}{\rho \mathcal{C}_{p}(p-2)}%
\right] ^{\frac{1}{p-4}}$ is the global strict maximum point of $f$, i.e.,
\begin{equation}
f(x_{\ast })=\max_{x>0}f(x)=\frac{p-4}{2(p-2)}\left( \frac{p}{(p-2)\rho
\mathcal{C}_{p}}\right) ^{\frac{2}{p-4}}+\frac{\alpha }{4}\rho ^{2}\log
(1+R).  \label{E15}
\end{equation}%
Moreover, $f$ is increasing on $0<x<x_{\ast }$ and is decreasing on $%
x>x_{\ast }$. Since $R>R_{0}$, we have $x_{\ast }>R^{-1}\left( \lambda
_{1}(\Omega )\rho \right) ^{1/2}$. Moreover, for $R>R_{0}$ and $0<|\alpha
|<\alpha _{R,\rho }^{0}$, we obtain
\begin{equation*}
f\left( R^{-1}\left( \lambda _{1}(\Omega )\rho \right) ^{1/2}\right) =\frac{1%
}{2}R^{-2}\lambda _{1}(\Omega )\rho -\frac{1}{p}\mathcal{C}_{p}\left(
R^{-2}\lambda _{1}(\Omega )\rho \right) ^{(p-2)/2}\rho +\frac{\alpha }{4}%
\rho ^{2}\log (1+R)>0.
\end{equation*}%
Thus, $f\left( x_{\ast }\right) >f\left( R^{-1}\left( \lambda _{1}(\Omega
)\rho \right) ^{1/2}\right) >0$. The proof is complete.
\end{proof}

Define
\begin{equation*}
Q_{\rho ,R}=\left\{ u\in \mathcal{S}_{\Omega _{R},\alpha }:\Vert \nabla
u\Vert _{2,\Omega _{R}}\leq x_{\ast }\right\} .
\end{equation*}

\begin{proposition}
\label{L4.2} For any $\rho >0$, there exist $R_{\rho }\geq R_{0}$ and $%
\alpha _{R,\rho }^{\ast }\leq \alpha _{R,\rho }^{0}$ such that for any $%
R>R_{\rho }\ $and $0<|\alpha |<\alpha _{R,\rho }^{\ast }$, there hold $%
Q_{\rho ,R}\neq \emptyset $ and
\begin{equation*}
C_{R,\rho }:=\inf_{u\in Q_{\rho ,R}}\widetilde{J}_{R}(u)>0
\end{equation*}%
is achieved at $u_{R}^{0}\in Q_{\rho ,R}\setminus \partial Q_{\rho ,R}$.
Moreover, $u_{R}^{0}$ is a solution of problem (\ref{E5}) for some Lagrange
multiplier $\lambda _{R,\rho }\in \mathbb{R}$.
\end{proposition}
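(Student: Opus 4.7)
The plan is to run the direct method of the calculus of variations on the constraint set $Q_{\rho,R}\subset H_{0}^{1}(\Omega_{R})$, using the coercivity estimate $\widetilde{J}_{R}(u)\ge f(\|\nabla u\|_{2,\Omega_{R}})$ displayed before (\ref{E14}) together with Lemma \ref{L4.1}, and then to rule out boundary minimizers by a cheap test-function comparison before reading off the Lagrange multiplier.

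I start by exhibiting an explicit element of $Q_{\rho,R}$: the rescaled principal eigenfunction $\psi_{R}(x):=R^{-1}\psi(x/R)$ lies in $H_{0}^{1}(\Omega_{R})$ with $\|\psi_{R}\|_{2,\Omega_{R}}^{2}=\rho$ and $\|\nabla\psi_{R}\|_{2,\Omega_{R}}=R^{-1}(\lambda_{1}(\Omega)\rho)^{1/2}$; since $R>R_{0}$, Lemma \ref{L4.1} gives $R^{-1}(\lambda_{1}(\Omega)\rho)^{1/2}<x_{\ast}$, so $\psi_{R}\in Q_{\rho,R}$. Positivity of $C_{R,\rho}$ is then immediate: for any $u\in Q_{\rho,R}$, chaining the inequality before (\ref{E14}), the Poincar\'{e} bound $\|\nabla u\|_{2,\Omega_{R}}^{2}\ge R^{-2}\lambda_{1}(\Omega)\rho$, and the monotonicity of $f$ on $[0,x_{\ast}]$ yields $\widetilde{J}_{R}(u)\ge f(R^{-1}(\lambda_{1}(\Omega)\rho)^{1/2})>0$ whenever $R>R_{0}$ and $0<|\alpha|<\alpha_{R,\rho}^{0}$.

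For attainment, I take a minimizing sequence $\{u_{n}\}\subset Q_{\rho,R}$; the bound $\|\nabla u_{n}\|_{2,\Omega_{R}}\le x_{\ast}$ keeps $\{u_{n}\}$ bounded in $H_{0}^{1}(\Omega_{R})$, so up to a subsequence $u_{n}\rightharpoonup u_{R}^{0}$ weakly in $H_{0}^{1}(\Omega_{R})$. On the bounded planar domain $\Omega_{R}$, Rellich--Kondrachov gives strong $L^{q}$ convergence for every $q\in[1,\infty)$; this preserves the $L^{2}$-constraint, makes the $L^{p}$ term continuous, and--using $\chi_{0}=\chi_{1}-\chi_{2}$ with $\log(1+|x-y|)$ bounded on $\Omega_{R}\times\Omega_{R}$ (so that $\chi_{1}$ is continuous by dominated convergence) and $\chi_{2}$ controlled by (\ref{E7}) plus strong $L^{8/3}$ convergence--makes the nonlocal term continuous as well. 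Combined with weak lower semicontinuity of $\|\nabla\cdot\|_{2}^{2}$, this produces $u_{R}^{0}\in Q_{\rho,R}$ with $\widetilde{J}_{R}(u_{R}^{0})=C_{R,\rho}$.

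To rule out $u_{R}^{0}\in\partial Q_{\rho,R}$ and then derive the equation, I use $\psi_{R}$ as a test function. A direct scaling identity gives
\[
\widetilde{J}_{R}(\psi_{R})=\tfrac{1}{2}R^{-2}\lambda_{1}(\Omega)\rho-\tfrac{1}{p}R^{-(p-2)}\|\psi\|_{p}^{p}+\tfrac{\alpha}{4}\rho^{2}\log R+\tfrac{\alpha}{4}\chi_{0}(\psi^{2},\psi^{2}),
\]
while (\ref{E15}) yields $f(x_{\ast})=\frac{p-4}{2(p-2)}\bigl(\frac{p}{(p-2)\rho\mathcal{C}_{p}}\bigr)^{2/(p-4)}+\frac{\alpha}{4}\rho^{2}\log(1+R)$. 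The admissibility bound $|\alpha|<\alpha_{R,\rho}^{0}$, which behaves like $O(R^{-2}/\log R)$ as $R\to\infty$, forces $|\alpha|\log R\to 0$; hence $\widetilde{J}_{R}(\psi_{R})\to 0$ while $f(x_{\ast})$ tends to a fixed positive constant. Enlarging $R_{\rho}\ge R_{0}$ and shrinking $\alpha_{R,\rho}^{\ast}\le\alpha_{R,\rho}^{0}$ if necessary, one secures $C_{R,\rho}\le\widetilde{J}_{R}(\psi_{R})<f(x_{\ast})$. If one had $\|\nabla u_{R}^{0}\|_{2,\Omega_{R}}=x_{\ast}$, the lower bound would force $\widetilde{J}_{R}(u_{R}^{0})\ge f(x_{\ast})>C_{R,\rho}$, a contradiction; so $u_{R}^{0}\in Q_{\rho,R}\setminus\partial Q_{\rho,R}$. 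The gradient-norm constraint being inactive at $u_{R}^{0}$, the Lagrange multiplier rule applied to $\widetilde{J}_{R}$ on the $L^{2}$-sphere $\mathcal{S}_{\Omega_{R},\rho}$ produces $\lambda_{R,\rho}\in\mathbb{R}$ such that $u_{R}^{0}$ solves (\ref{E5}). The main technical point is the third paragraph: aligning the two thresholds $R_{\rho}$ and $\alpha_{R,\rho}^{\ast}$ so that the logarithmic divergence $\frac{\alpha}{4}\rho^{2}\log R$ in $\widetilde{J}_{R}(\psi_{R})$ is absorbed by the $R$-dependent smallness built into $\alpha_{R,\rho}^{0}$ is delicate but manageable, since the two competing logarithmic quantities $\log R$ and $\log(1+R)$ cancel asymptotically.
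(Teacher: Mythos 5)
Your proposal is correct and follows essentially the same route as the paper's proof: positivity of $C_{R,\rho}$ via the auxiliary function $f$ combined with the Poincar\'{e} inequality, attainment by compactness on the bounded domain, exclusion of boundary minimizers by comparing $\widetilde{J}_{R}(\psi_{R})$ against the boundary lower bound $f(x_{\ast})$ from (\ref{E22}), and the Lagrange multiplier rule at the interior minimizer. The only differences are technical rather than conceptual: you obtain attainment by the direct method (weak lower semicontinuity of the gradient term plus Rellich--Kondrachov and continuity of the nonlocal term, the latter needing the bilinear form of the Hardy--Littlewood--Sobolev bound rather than the diagonal estimate (\ref{E7}) as literally stated), whereas the paper uses Ekeland's principle and a Brezis--Lieb-type splitting to get strong $H_{0}^{1}$ convergence of the minimizing sequence; and you tune $R_{\rho}$ and $\alpha_{R,\rho}^{\ast}$ asymptotically, using $\alpha_{R,\rho}^{0}=O(R^{-2}/\log R)$, where the paper writes down the explicit threshold $\alpha_{R,\rho}^{1}$ --- both variants are sound.
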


\begin{proof}
Let $\psi \in \mathcal{S}_{\Omega ,\rho }$ be the positive normalized
eigenfunction $-\Delta $ with Dirichlet boundary condition in $\Omega $
associated to $\lambda _{1}(\Omega )$. It follows from the Poincar\'{e}
inequality that
\begin{equation}
\int_{\Omega _{R}}|\nabla u|^{2}dx\geq R^{-2}\lambda _{1}(\Omega )\rho \
\text{for any}\ u\in \mathcal{S}_{\Omega _{R},\alpha }.  \label{E16}
\end{equation}%
By virtue of (\ref{E16}) and the fact that $x_{\ast }$ is independent of $R$%
, we obtain that $Q_{\rho ,R}\neq \emptyset $ if and only if $R\geq R_{0}$.

Next, we claim that $C_{R,\rho }>0$ is achieved. For any $u\in Q_{\rho ,R},$
it follows from (\ref{E16}) that
\begin{equation*}
x_{\ast }^{2}>\int_{\Omega _{R}}|\nabla u|^{2}dx\geq R^{-2}\lambda
_{1}(\Omega )\rho .
\end{equation*}%
So by (\ref{E14}) and Lemma \ref{L4.1}, we get $C_{R,\rho }>0$. By the
Ekeland variational principle, there exists a sequence $\{u_{n}\}\subset
Q_{\rho ,R}$ such that
\begin{equation*}
\widetilde{J}_{R}(u_{n})\rightarrow C_{R,\rho }\ \text{and}\ \widetilde{J}%
^{\prime }(u_{n})|_{T_{u_{n}}Q_{\rho ,R}}\rightarrow 0\ \text{as}\
n\rightarrow \infty .
\end{equation*}%
Since $\Vert \nabla u_{n}\Vert _{2,\Omega _{R}}<x_{\ast }$, there exists $%
u_{R}^{0}\in H_{0}^{1}\left( \Omega _{R}\right) $ such that up to a
subsequence,
\begin{equation*}
u_{n}\rightharpoonup u_{R}^{0}\ \text{in}\ H_{0}^{1}\left( \Omega
_{R}\right) \ \text{and }u_{n}\rightarrow u_{R}^{0}\ \text{in}\ L^{q}\left(
\Omega _{R}\right) \ \text{for any}\ q\geq 1.
\end{equation*}%
Moreover, using the Fatou's lemma, one has $\Vert \nabla u_{R}^{0}\Vert
_{2,\Omega _{R}}\leq \liminf_{n\rightarrow \infty }\Vert \nabla u_{n}\Vert
_{2,\Omega _{R}}\leq x_{\ast }$, which implies that $u_{R}^{0}\in Q_{\rho
,R} $ and so $\widetilde{J}_{R}\left( u_{R}^{0}\right) \geq C_{R,\rho }$.
Denote $w_{n}=u_{n}-u_{R}^{0}$, then we calculate that $\Vert w_{n}\Vert
_{p,\Omega _{R}}^{p}=o_{n}(1)$ and
\begin{eqnarray*}
&&\left\vert \int_{\Omega _{R}}\int_{\Omega _{R}}\log \left( |x-y|\right)
|w_{n}(x)|^{2}|w_{n}(y)|^{2}dxdy\right\vert \\
&\leq &\log \left( 1+R\right) \int_{\Omega _{R}}\int_{\Omega
_{R}}|w_{n}(x)|^{2}|w_{n}(y)|^{2}dxdy+\mathcal{C}\left( \int_{\Omega
_{R}}|w_{n}|^{8/3}dx\right) ^{3/2} \\
&=&o_{n}(1).
\end{eqnarray*}%
This shows that
\begin{equation*}
\widetilde{J}_{R}(u_{n})=\widetilde{J}_{R}\left( u_{R}^{0}\right) +\frac{1}{2%
}\Vert \nabla w_{n}\Vert _{2,\Omega _{R}}^{2}+o_{n}(1)\geq C_{R,\rho
}+o_{n}(1),
\end{equation*}%
which implies that $u_{n}\rightarrow u_{R}^{0}$ in $H_{0}^{1}\left( \Omega
_{R}\right) $.

Finally, we claim that $u_{R}^{0}\in Q_{\rho ,R}\setminus \partial Q_{\rho
,R}$. For each $x\in \Omega $, set $\psi _{R}(x):=R^{-1}\psi \left(
R^{-1}x\right) $. Obviously, $\psi _{R}\in Q_{\rho ,R}$. If
\begin{equation}
\widetilde{J}_{R}(u)|_{\partial Q_{\rho ,R}}>\widetilde{J}_{R}\left( \psi
_{R}\right) ,  \label{E17}
\end{equation}%
then%
\begin{equation*}
C_{R,\rho }<\inf_{u\in \partial Q_{\rho ,R}}\widetilde{J}_{R}(u)
\end{equation*}%
and $u_{R}^{0}\in Q_{\rho ,R}\setminus \partial Q_{\rho ,R}$. So we only
need to prove that (\ref{E17}) holds. Note that
\begin{equation*}
\int_{\Omega }|\nabla \psi |^{2}dx=\lambda _{1}(\Omega )\rho \ \text{and}\
\int_{\Omega }|\psi |^{p}dx\geq \rho ^{\frac{p}{2}}|\Omega |^{-\frac{p-2}{2}%
}.
\end{equation*}%
Then
\begin{equation}
\int_{\Omega _{R}}|\nabla \psi _{R}|^{2}dx=R^{-2}\int_{\Omega }|\nabla \psi
|^{2}dx=R^{-2}\lambda _{1}(\Omega )\rho  \label{E18}
\end{equation}%
and
\begin{equation}
\int_{\Omega _{R}}|\psi _{R}|^{p}dx=R^{-(p-2)}\int_{\Omega }|\psi
|^{p}dx\geq R^{-(p-2)}\rho ^{p/2}|\Omega |^{-(p-2)/2}.  \label{E19}
\end{equation}%
By (\ref{E7}) and (\ref{E18}), we have
\begin{eqnarray}
&&\int_{\Omega _{R}}\int_{\Omega _{R}}\log (|x-y|)|\psi _{R}(x)|^{2}|\psi
_{R}(y)|^{2}dxdy  \notag \\
&=&\int_{\Omega _{R}}\int_{\Omega _{R}}\log (1+|x-y|)|\psi _{R}(x)|^{2}|\psi
_{R}(y)|^{2}dxdy  \notag \\
&&-\int_{\Omega _{R}}\int_{\Omega _{R}}\log \left( 1+\frac{1}{|x-y|}\right)
|\psi _{R}(x)|^{2}|\psi _{R}(y)|^{2}dxdy  \notag \\
&\geq &-\mathcal{C}\left( \int_{\Omega _{R}}|\psi _{R}|^{8/3}dx\right) ^{3/2}
\notag \\
&\geq &-\mathcal{CC}_{8/3}^{3/2}\rho ^{3/2}\left( \int_{\Omega _{R}}|\nabla
\psi _{R}|^{2}dx\right) ^{1/2}  \notag \\
&=&-\mathcal{CC}_{8/3}^{3/2}R^{-1}\lambda _{1}(\Omega )^{1/2}\rho ^{2}.
\label{E20}
\end{eqnarray}%
It follows from (\ref{E18})--(\ref{E20}) that
\begin{eqnarray}
C_{R,\rho } &\leq &\widetilde{J}_{R}\left( \psi _{R}\right)  \notag \\
&=&\frac{1}{2}\int_{\Omega _{R}}|\nabla \psi _{R}|^{2}dx+\frac{\alpha }{4}%
\int_{\Omega _{R}}\int_{\Omega _{R}}\log (|x-y|)|\psi _{R}(x)|^{2}|\psi
_{R}(y)|^{2}dxdy  \notag \\
&&-\frac{1}{p}\int_{\Omega }|\psi _{R}|^{p}dx  \notag \\
&\leq &\frac{1}{2}R^{-2}\lambda _{1}(\Omega )\rho -\frac{1}{p}R^{-(p-2)}\rho
^{p/2}|\Omega |^{-(p-2)/2}-\frac{\alpha }{4}\mathcal{CC}_{8/3}^{3/2}R^{-1}%
\lambda _{1}(\Omega )^{1/2}\rho ^{2}.  \label{E21}
\end{eqnarray}%
By (\ref{E14})--(\ref{E15}), one has
\begin{equation}
\widetilde{J}_{R}(u)|_{\partial Q_{\rho ,R}}\geq f(x_{\ast })=\frac{p-4}{%
2(p-2)}\left( \frac{p}{(p-2)\rho \mathcal{C}_{p}}\right) ^{\frac{2}{p-4}}+%
\frac{\alpha }{4}\rho ^{2}\log (1+R)>0.  \label{E22}
\end{equation}%
Combining with (\ref{E21}) and (\ref{E22}), we conclude that if the
following inequality
\begin{eqnarray}
&&\frac{p-4}{2(p-2)}\left( \frac{p}{(p-2)\rho \mathcal{C}_{p}}\right) ^{%
\frac{2}{p-4}}+\frac{\alpha }{4}\rho ^{2}\log (1+R)  \notag \\
&>&\frac{1}{2}R^{-2}\lambda _{1}(\Omega )\rho -\frac{1}{p}R^{-(p-2)}\rho
^{p/2}|\Omega |^{-(p-2)/2}-\frac{\alpha }{4}\mathcal{CC}_{8/3}^{3/2}R^{-1}%
\lambda _{1}(\Omega )^{1/2}\rho ^{2}  \label{E23}
\end{eqnarray}%
holds, then (\ref{E17}) is true. In fact, to prove (\ref{E23}), we only need%
\begin{equation*}
0<|\alpha |<\frac{\frac{2(p-4)}{p-2}\left( \frac{p}{(p-2)\rho \mathcal{C}_{p}%
}\right) ^{\frac{2}{p-4}}-2R^{-2}\lambda _{1}(\Omega )\rho +\frac{4}{p}%
R^{-(p-2)}\rho ^{p/2}|\Omega |^{-(p-2)/2}}{\rho ^{2}\log (1+R)+\mathcal{CC}%
_{8/3}^{3/2}R^{-1}\lambda _{1}(\Omega )^{1/2}\rho ^{2}}=:\alpha _{R,\rho
}^{1}
\end{equation*}%
and%
\begin{equation*}
\frac{p-4}{2(p-2)}\left( \frac{p}{(p-2)\rho \mathcal{C}_{p}}\right) ^{\frac{2%
}{p-4}}-\frac{1}{2}R^{-2}\lambda _{1}(\Omega )\rho +\frac{1}{p}%
R^{-(p-2)}\rho ^{p/2}|\Omega |^{-(p-2)/2}>0
\end{equation*}%
by controlling $R>0$ large enough. Therefore, there exists $R_{\rho }\geq
R_{0}$ independent on $\alpha $ such that for any $R>R_{\rho }$ and $%
0<|\alpha |<\alpha _{R,\rho }^{\ast }:=\min \{\alpha _{R,\rho }^{0},\alpha
_{R,\rho }^{1}\}$, we arrive at the conclusion. The proof is complete.
\end{proof}

Similar to the proof of \cite[Lemma 2.7]{CJ2019}, any weak solution $u\in
H_{0}^{1}\left( \Omega _{R}\right) $ of Problem (\ref{E5}) satisfies the
Pohozaev identity as follows:
\begin{equation}
P_{\Omega _{R}}(u):=\Vert \nabla u\Vert _{2,\Omega _{R}}^{2}-\frac{p-2}{p}%
\Vert u\Vert _{p,\Omega _{R}}^{p}-\frac{\alpha \rho ^{2}}{4}-\frac{1}{2}%
\int_{\partial \Omega _{R}}|\nabla u|^{2}x\cdot nd\sigma =0.  \label{E24}
\end{equation}%
If $\Omega $ is a star-shaped bounded domain, then
\begin{equation}
\Vert \nabla u\Vert _{2,\Omega _{R}}^{2}-\frac{p-2}{p}\Vert u\Vert
_{p,\Omega _{R}}^{p}-\frac{\alpha \rho ^{2}}{4}=\frac{1}{2}\int_{\partial
\Omega _{R}}|\nabla u|^{2}x\cdot nd\sigma \geq 0,  \label{E25}
\end{equation}%
and%
\begin{equation*}
\alpha _{R,\rho }^{\ast }\log (1+R)\leq \alpha _{R,\rho }^{0}\log
(1+R)=2R^{-2}\lambda _{1}(\Omega )\rho ^{-1}-\frac{4}{p}\mathcal{C}_{p}\rho
\left( R^{-2}\lambda _{1}(\Omega )\rho \right) ^{(p-2)/2}\rho
^{-1}\rightarrow 0^{+}\text{ as }R\rightarrow \infty ,
\end{equation*}%
since $\alpha _{R,\rho }^{0}\rightarrow 0$ as $R\rightarrow \infty .$ This
shows that%
\begin{equation}
\lim_{R\rightarrow \infty }\alpha \log (1+R)=0\text{ for }0<|\alpha |<\alpha
_{R,\rho }^{\ast }.  \label{E26}
\end{equation}%
It follows from (\ref{E21}) and (\ref{E26}) that
\begin{eqnarray}
0<C_{R,\rho }\leq \widetilde{J}_{R}(\psi _{R}) &\leq &\frac{1}{2}%
R^{-2}\lambda _{1}(\Omega )\rho -\frac{1}{p}R^{-(p-2)}\rho ^{p/2}|\Omega
|^{-(p-2)/2}-\frac{\alpha }{4}\mathcal{CC}_{8/3}^{3/2}R^{-1}\lambda
_{1}(\Omega )^{1/2}\rho ^{2}  \notag \\
&\rightarrow &0^{+}\ \text{as}\ R\rightarrow \infty .  \label{E27}
\end{eqnarray}

Define
\begin{equation*}
m_{\rho ,\Omega _{R}}=\inf_{u\in \mathcal{P}_{\Omega _{R}}}\widetilde{J}%
_{R}(u).
\end{equation*}%
where%
\begin{equation*}
\mathcal{P}_{\Omega _{R}}:=\left\{ u\in \mathcal{S}_{\rho ,\Omega
_{R}}:P_{\Omega _{R}}(u)=0\right\} .
\end{equation*}%
Obviously, $m_{\rho ,\Omega _{R}}\leq C_{R,\rho }$.

\begin{lemma}
\label{L4.3} Let $\rho >0$ be fixed. There exists $R_{\ast }\geq R_{\rho }$
such that for any $R\geq R_{\ast }$ and $0<|\alpha |<\alpha _{R,\rho }^{\ast
}$, it holds that $m_{\rho ,\Omega _{R}}=C_{R,\rho }$.
\end{lemma}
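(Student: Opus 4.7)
The plan is to prove equality by showing both inequalities. For the easy direction $m_{\rho,\Omega_R}\leq C_{R,\rho}$, I would note that the local minimizer $u_R^0\in H_0^1(\Omega_R)$ produced by Proposition~\ref{L4.2} is a genuine weak solution of Problem~(\ref{E5}), so it satisfies the Pohozaev identity (\ref{E24}); hence $u_R^0\in\mathcal{P}_{\Omega_R}$ and $m_{\rho,\Omega_R}\leq \widetilde{J}_R(u_R^0)=C_{R,\rho}$.

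For the reverse inequality $m_{\rho,\Omega_R}\geq C_{R,\rho}$, I would fix an arbitrary $u\in\mathcal{P}_{\Omega_R}$ and split on whether $u\in Q_{\rho,R}$. If $\|\nabla u\|_{2,\Omega_R}\leq x_\ast$ then $u\in Q_{\rho,R}$ and $\widetilde{J}_R(u)\geq C_{R,\rho}$ by definition. Otherwise $\|\nabla u\|_{2,\Omega_R}>x_\ast$, and the star-shaped hypothesis together with (\ref{E25}) gives the key bound
\begin{equation*}
\frac{1}{p}\|u\|_{p,\Omega_R}^{p}\leq \frac{1}{p-2}\|\nabla u\|_{2,\Omega_R}^{2}-\frac{\alpha\rho^{2}}{4(p-2)}.
\end{equation*}
Substituting this into the definition of $\widetilde{J}_R(u)$ yields
\begin{equation*}
\widetilde{J}_R(u)\geq \frac{p-4}{2(p-2)}\|\nabla u\|_{2,\Omega_R}^{2}+\frac{\alpha}{4}\chi_{0}(u^{2},u^{2})+\frac{\alpha\rho^{2}}{4(p-2)}.
\end{equation*}
To handle the logarithmic term, I would write $\chi_0=\chi_1-\chi_2$, drop the non-negative contribution $-\frac{\alpha}{4}\chi_2\geq 0$ (since $\alpha<0$), and use that the diameter of $\Omega_R$ is $R$ so $\chi_1(u^2,u^2)\leq \log(1+R)\rho^2$; since $\alpha<0$, this gives $\frac{\alpha}{4}\chi_0(u^2,u^2)\geq \frac{\alpha}{4}\log(1+R)\rho^2$.

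Combining everything, for $u\in\mathcal{P}_{\Omega_R}\setminus Q_{\rho,R}$ I obtain
\begin{equation*}
\widetilde{J}_R(u)\geq \frac{p-4}{2(p-2)}x_\ast^{2}+\frac{\alpha}{4}\log(1+R)\rho^{2}+\frac{\alpha\rho^{2}}{4(p-2)}.
\end{equation*}
The last step is to invoke the decay information on the parameters: by (\ref{E26}) one has $\alpha\log(1+R)\to 0$ as $R\to\infty$ uniformly in $|\alpha|<\alpha_{R,\rho}^\ast$, and since $\alpha_{R,\rho}^\ast\to 0$ the remainder term $\frac{\alpha\rho^2}{4(p-2)}$ also vanishes in the limit, so the right-hand side converges to the strictly positive constant $\frac{p-4}{2(p-2)}x_\ast^{2}$. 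On the other hand, (\ref{E27}) shows $C_{R,\rho}\to 0^{+}$. Consequently, there exists $R_\ast\geq R_\rho$ such that for all $R\geq R_\ast$ and all admissible $\alpha$, the displayed lower bound strictly exceeds $C_{R,\rho}$, yielding $\widetilde{J}_R(u)>C_{R,\rho}$ in Case~2 and completing the reverse inequality.

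The main technical obstacle is really a bookkeeping one: verifying that the smallness of $C_{R,\rho}$ coming from the test function $\psi_R$ (of order $R^{-2}$) genuinely overwhelms the residual contribution $\frac{\alpha}{4}\log(1+R)\rho^2+\frac{\alpha\rho^2}{4(p-2)}$, whose rate of decay is tied to $\alpha_{R,\rho}^\ast$. Both quantities tend to zero, so it is crucial that the dominant term $\frac{p-4}{2(p-2)}x_\ast^{2}$ is bounded below by an $R$-independent positive constant, which is indeed the case since $x_\ast$ depends only on $p,\rho,\mathcal{C}_p$. Once this quantitative comparison is made uniformly for $R\geq R_\ast$, the equality $m_{\rho,\Omega_R}=C_{R,\rho}$ follows.
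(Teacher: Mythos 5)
Your proof is correct, and its computational core coincides with the paper's: both arguments rest on combining the star-shaped Pohozaev inequality (\ref{E25}) with the kernel bound $\frac{\alpha }{4}\chi _{0}(u^{2},u^{2})\geq \frac{\alpha }{4}\log (1+R)\rho ^{2}$ to obtain
\begin{equation*}
\widetilde{J}_{R}(u)\geq \frac{p-4}{2(p-2)}\Vert \nabla u\Vert _{2,\Omega
_{R}}^{2}+\frac{\alpha }{4}\log (1+R)\rho ^{2}+\frac{\alpha \rho ^{2}}{4(p-2)
},
\end{equation*}
and then playing the $R$-independent constant $\frac{p-4}{2(p-2)}x_{\ast }^{2}>0$ against $\alpha \log (1+R)$, $\alpha $ and $C_{R,\rho }$, all of which vanish as $R\rightarrow \infty $ uniformly for $0<|\alpha |<\alpha _{R,\rho }^{\ast }$ by (\ref{E26})--(\ref{E27}). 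The difference lies in how the estimate is deployed, and your deployment is logically tighter. The paper opens with ``let $u$ be the solution of Problem (\ref{E5}) such that $\widetilde{J}_{R}(u)=m_{\rho ,\Omega _{R}}$'', i.e.\ it assumes, without justification, that the infimum over $\mathcal{P}_{\Omega _{R}}$ is attained by a solution; it then reads the same estimate as an upper bound $\Vert \nabla u\Vert _{2,\Omega _{R}}^{2}\leq \frac{2(p-2)}{p-4}\left( C_{R,\rho }-\frac{\alpha \rho ^{2}}{4(p-2)}-\frac{\alpha }{4}\log (1+R)\rho ^{2}\right) \rightarrow 0$, concluding that this minimizer lies inside $Q_{\rho ,R}$ for large $R$. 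You instead run a dichotomy over an \emph{arbitrary} $u\in \mathcal{P}_{\Omega _{R}}$ --- either $u\in Q_{\rho ,R}$, where $\widetilde{J}_{R}(u)\geq C_{R,\rho }$ by definition, or $\Vert \nabla u\Vert _{2,\Omega _{R}}>x_{\ast }$, where your lower bound exceeds $C_{R,\rho }$ once $R\geq R_{\ast }$ --- and so obtain $m_{\rho ,\Omega _{R}}\geq C_{R,\rho }$ with no attainment hypothesis whatsoever; this repairs a genuine gap in the paper's own argument. One point you should make explicit: for a general $u\in \mathcal{P}_{\Omega _{R}}$ that is not known to be a solution, the inequality you invoke does not come from (\ref{E25}) as stated (which concerns weak solutions), but from the definition $P_{\Omega _{R}}(u)=0$ together with $x\cdot n\geq 0$ on $\partial \Omega _{R}$ for a star-shaped domain. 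Your easy direction (the local minimizer $u_{R}^{0}$ solves (\ref{E5}), hence satisfies (\ref{E24}) and lies in $\mathcal{P}_{\Omega _{R}}$) is exactly the paper's remark that $m_{\rho ,\Omega _{R}}\leq C_{R,\rho }$.
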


\begin{proof}
Let $u$ be the solution of Problem (\ref{E5}) such that $\widetilde{J}%
_{R}(u)=m_{\rho ,\Omega _{R}}$. Using (\ref{E25})--(\ref{E27}), we have
\begin{eqnarray*}
\int_{\Omega _{R}}|\nabla u|^{2}dx &\leq &\frac{2(p-2)}{p-4}\left(
\widetilde{J}_{R}(u)-\frac{\alpha \rho ^{2}}{4(p-2)}-\frac{\alpha }{4}\log
(1+R)\rho ^{2}\right) \\
&\leq &\frac{2(p-2)}{p-4}\left( C_{R,\rho }-\frac{\alpha \rho ^{2}}{4(p-2)}-%
\frac{\alpha }{4}\log (1+R)\rho ^{2}\right) \\
&\rightarrow &0\ \text{as}\ R\rightarrow \infty ,
\end{eqnarray*}%
which implies that there exists $R_{\ast }\geq R_{\rho }$ such that $%
\int_{\Omega _{R}}|\nabla u|^{2}dx<x_{\ast }^{2}$ for any $R>R_{\ast }$. The
proof is complete.
\end{proof}

\textbf{Proof of Theorem \ref{T1.3} $(i)-(ii)$:}
$(i)$ By Proposition \ref{L4.2}, we get that Theorem \ref{T1.3} $(i)$ holds.\newline
$(ii)$ By Lemma \ref{L4.3}, we arrive the conclusion of Theorem \ref{T1.3} $(ii)$.

\subsection{The mountain-pass solution}

To obtain a mountain-pass solution, we introduce a modified problem related
to Problem (\ref{E5}):
\begin{equation}
\left\{
\begin{array}{ll}
-\Delta u+\lambda u+\alpha \left( \log |\cdot |\ast |u|^{2}\right)
u=s|u|^{p-2}u, & \ \text{in}\ \Omega _{R} \\
\int_{\Omega _{R}}|u|^{2}dx=\rho >0, &
\end{array}%
\right.  \label{E28}
\end{equation}%
where $s\in \lbrack 1/2,1]$. Solutions of Problem (\ref{E28}) correspond to
critical points of the energy function $\widetilde{J}_{R,s}:\mathcal{S}%
_{\Omega _{R},\rho }\rightarrow \mathbb{R}$ defined by
\begin{equation*}
\widetilde{J}_{R,s}(u)=\frac{1}{2}\int_{\Omega _{R}}|\nabla u|^{2}dx+\frac{%
\alpha }{4}\int_{\Omega _{R}}\int_{\Omega _{R}}\log \left( |x-y|\right)
|u(x)|^{2}|u(y)|^{2}dxdy-\frac{s}{p}\int_{\Omega _{R}}|u|^{p}dx
\end{equation*}%
on the constraint $\mathcal{S}_{\Omega _{R},\rho }$. Similar to the argument
of Theorem \ref{T1.3} $(i),$ for any $\rho >0$, $R>R_{\rho }$ and $0<|\alpha
|<\alpha _{R,\rho }^{\ast }$, Problem (\ref{E28}) has a local minimizer $%
v_{R,s}\in Q_{\rho ,R}$ satisfying%
\begin{equation*}
\widetilde{J}_{R,s}(v_{R,s})=\inf_{u\in Q_{\rho ,R}}\widetilde{J}%
_{R,s}(u)=:C_{R,s,\rho }.
\end{equation*}%
Next, we consider the geometry structure of the energy functional $%
\widetilde{J}_{R,s}$.

\begin{lemma}
\label{L4.4} For any $R>R_{\rho }$ and $0<|\alpha |<\alpha _{R,\rho }^{\ast
} $, it holds that
\begin{equation*}
0<f\left( x_{\ast }\right) \leq \overline{C}_{R,s,\rho
}:=\inf\limits_{\gamma \in \Gamma _{R,\rho }}\sup\limits_{t\in (0,\infty )}%
\widetilde{J}_{R,s}\left( \gamma (t)\right) \leq \max_{t>1}\widetilde{J}%
_{R,s}\left( (v_{R,s})_{t}\right) ,
\end{equation*}%
where%
\begin{eqnarray*}
\Gamma _{R,\rho } &:=&\left\{ \gamma \in C\left( [0,\infty
),H_{0}^{1}(\Omega _{R})\right) :\gamma (0)=v_{R,s},\exists t_{\gamma }>0%
\text{ }s.t.\ \Vert \nabla \gamma (t)\Vert _{2}>x_{\ast}\right. \\
&&\left. \text{and}\ \widetilde{J}_{R,s}\left( \gamma (t)\right) <0\ \text{%
for any}\ t>t_{\gamma }\right\} .
\end{eqnarray*}
\end{lemma}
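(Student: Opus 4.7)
The plan is to prove the two outer inequalities separately, using (a) a uniform lower bound $\widetilde{J}_{R,s}(u) \geq f(\|\nabla u\|_{2,\Omega_R})$ on the constraint to produce a barrier at level $f(x_*)$, and (b) an explicit test path in $\Gamma_{R,\rho}$ built from the scaled family $(v_{R,s})_{1+t}$. The strict positivity $f(x_*) > 0$ is immediate from Lemma \ref{L4.1}, so only the outer inequalities require work.

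For the lower bound $f(x_*) \leq \overline{C}_{R,s,\rho}$, I would first verify the pointwise inequality $\widetilde{J}_{R,s}(u) \geq f(\|\nabla u\|_{2,\Omega_R})$ on $\mathcal{S}_{\Omega_R,\rho}$: since $\mathrm{diam}(\Omega_R) = R$, one has $\chi_1(u^2,u^2) \leq \rho^2\log(1+R)$, and since $\chi_2(u^2,u^2) \geq 0$, also $\chi_0(u^2,u^2) \leq \rho^2\log(1+R)$; combined with $\alpha < 0$, the Gagliardo-Nirenberg inequality (\ref{E6}), and $s \leq 1$, this reproduces $f$ as defined in (\ref{E14}). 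Now fix any $\gamma \in \Gamma_{R,\rho}$. Since $\gamma(0) = v_{R,s} \in Q_{\rho,R}$ satisfies $\|\nabla\gamma(0)\|_{2,\Omega_R} \leq x_*$ while $\|\nabla\gamma(t)\|_{2,\Omega_R} > x_*$ for $t > t_\gamma$, continuity of $t \mapsto \|\nabla\gamma(t)\|_{2,\Omega_R}$ produces some $t_0 \in (0, t_\gamma]$ with $\|\nabla\gamma(t_0)\|_{2,\Omega_R} = x_*$, whence the pointwise bound gives $\widetilde{J}_{R,s}(\gamma(t_0)) \geq f(x_*)$. Taking the supremum in $t$ and then the infimum in $\gamma$ closes this direction.

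For the upper bound I would take $\gamma_0(t) := (v_{R,s})_{1+t}$, where $u_\tau(x) = \tau u(\tau x)$. Star-shapedness of $\Omega$ about the origin gives $\Omega_R/\tau \subset \Omega_R$ for every $\tau \geq 1$, so each $\gamma_0(t)$ lies in $H_0^1(\Omega_R)$ and $t \mapsto \gamma_0(t)$ is norm-continuous, with $\gamma_0(0) = v_{R,s}$. The standard scaling identities yield mass invariance, $\|\nabla\gamma_0(t)\|_{2,\Omega_R}^2 = (1+t)^2\|\nabla v_{R,s}\|_{2,\Omega_R}^2$, $\|\gamma_0(t)\|_{p,\Omega_R}^p = (1+t)^{p-2}\|v_{R,s}\|_{p,\Omega_R}^p$, and $\chi_0(\gamma_0(t)^2, \gamma_0(t)^2) = \chi_0(v_{R,s}^2, v_{R,s}^2) - \rho^2\log(1+t)$. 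Because $p > 4$, the term $-\tfrac{s(1+t)^{p-2}}{p}\|v_{R,s}\|_{p,\Omega_R}^p$ dominates as $t \to \infty$, so $\widetilde{J}_{R,s}(\gamma_0(t)) \to -\infty$ and $\|\nabla\gamma_0(t)\|_{2,\Omega_R} \to \infty$, confirming $\gamma_0 \in \Gamma_{R,\rho}$. Setting $\tau = 1+t$ then gives $\overline{C}_{R,s,\rho} \leq \sup_{t>0}\widetilde{J}_{R,s}(\gamma_0(t)) = \sup_{\tau>1}\widetilde{J}_{R,s}((v_{R,s})_\tau)$, the supremum coinciding with a maximum by continuity on $(1,\infty)$ and divergence to $-\infty$.

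The only delicate point is guaranteeing that the test path stays inside $H_0^1(\Omega_R)$; this is exactly where star-shapedness of $\Omega$ enters, and without it one would need a different construction (e.g., a cutoff near $\partial\Omega_R$) to build an admissible path. All remaining estimates are direct adaptations of the scaling computations of Section 2, combined with the diameter bound $\mathrm{diam}(\Omega_R) = R$ already exploited in Proposition \ref{L4.2}.
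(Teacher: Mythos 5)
Your proof is correct and follows essentially the same route as the paper: the lower bound via the barrier $\widetilde{J}_{R,s}(u)\geq f(\Vert\nabla u\Vert_{2,\Omega_R})$ on $\mathcal{S}_{\Omega_R,\rho}$ combined with the crossing argument at $\Vert\nabla\gamma(t_0)\Vert_{2,\Omega_R}=x_\ast$, and the upper bound via the admissible dilation path $\gamma(t)=(v_{R,s})_{1+t}$. You are in fact more explicit than the paper on two points it leaves implicit --- the derivation of the pointwise bound (diameter estimate for $\chi_1$, positivity of $\chi_2$, $s\leq 1$) and the role of star-shapedness in keeping the dilated functions inside $H_0^1(\Omega_R)$, the latter being supplied by the hypotheses of Theorem \ref{T1.3} $(iii)$ where the lemma is used.
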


\begin{proof}
Denote $(v_{R,s})_{t}=tv_{R,s}(tx)$ for any $t\geq 1$. Then $%
(v_{R,s})_{t}\in \mathcal{S}_{\Omega _{R},\rho }$. A direct calculation
shows that
\begin{eqnarray*}
\widetilde{J}_{R,s}\left( (v_{R,s})_{t}\right) &=&\frac{t^{2}}{2}%
\int_{\Omega _{R}}|\nabla v_{R,s}|^{2}dx+\frac{\alpha }{4}\int_{\Omega
_{R}}\int_{\Omega _{R}}|v_{R,s}(x)|^{2}|u_{R}(y)|^{2}\log \left(
|x-y|\right) dxdy \\
&&-\frac{\alpha \rho ^{2}}{4}\log t-\frac{t^{p-2}s}{p}\int_{\Omega
_{R}}|v_{R,s}|^{p}dx \\
&\rightarrow &-\infty \ \text{as}\ t\rightarrow \infty ,
\end{eqnarray*}%
which indicates that there exists $t_{1}>1$ such that $\widetilde{J}%
_{R}\left( (v_{R,s})_{t_{1}}\right) =\max_{t\geq 1}\widetilde{J}_{R}\left(
(v_{R,s})_{t}\right) \geq \widetilde{J}_{R}\left( v_{R,s}\right) .$
Moreover, there exists $t_{\ast }>t_{1}$ such that $\widetilde{J}_{R}\left(
(v_{R,s})_{t+1}\right) <0$ and $\Vert \nabla (v_{R,s})_{t+1}\Vert
_{2}>x_{\ast }$ for any $t>t_{\ast }$. Set $\gamma (t):=(v_{R,s})_{t+1}$ for
$t\geq 0$. Obviously, $\gamma (t)\in \Gamma _{R,\rho }$. Note that for any $%
\gamma \in \Gamma _{R,\rho }$ and $t>t_{\ast }$, there holds $\Vert \nabla
\gamma (0)\Vert _{2}<x_{\ast }<\Vert \nabla \gamma (t)\Vert _{2}$, which
shows that
\begin{equation*}
\max_{t>1}\widetilde{J}_{R,s}\left( (v_{R,s})_{t}\right) \geq
\inf\limits_{\gamma \in \Gamma _{R,\rho }}\sup_{t\in \lbrack 0,\infty )}%
\widetilde{J}_{R,s}\left( \gamma (t)\right) \geq \max_{x>0}f(x)=f\left(
x_{\ast }\right) >0.
\end{equation*}%
The proof is complete.
\end{proof}

\begin{theorem}
\label{L4.5} (\cite[Theorem 1.5]{BCJS}) Let $(E,\langle \cdot ,\cdot \rangle
)$ and $(H,\langle \cdot ,\cdot \rangle )$ be two infinite-dimensional
Hilbert spaces and assume that there are continuous injections
\begin{equation*}
E\hookrightarrow H\hookrightarrow E^{\prime }.
\end{equation*}%
Let
\begin{equation*}
\Vert u\Vert _{2}^{2}=\langle u,u\rangle ,\ |u|^{2}=(u,u)\ \text{for}\ u\in E
\end{equation*}%
and
\begin{equation*}
S_{\mu }=\{u\in E:|u|^{2}=\mu \},\ T_{\mu }S_{\mu }=\{v\in E:(u,v)=0\}\
\text{for}\ \mu \in (0,+\infty ).
\end{equation*}%
Let $I\subset (0,+\infty )$ be an interval and consider a family of $C^{2}$
functional $\Phi _{a}:E\rightarrow \mathbb{R}$ of the form
\begin{equation*}
\Phi _{a}=A(u)-aB(u)\ \text{for}\ \rho \in I,
\end{equation*}%
with $B(u)\geq 0$ for every $u\in E$, and
\begin{equation*}
A(u)\rightarrow \infty \ \text{or}\ B(u)\rightarrow +\infty \ \text{as}\
u\in E\ \text{and}\ \Vert u\Vert \rightarrow +\infty .
\end{equation*}%
Moreover, $\Phi _{a}^{\prime }$ and $\Phi _{a}^{\prime \prime }$ are $\tau $%
-H\"{o}lder continuous with $\tau \in (0,1]$, on bounded sets in the
following sense: for every $R>0$ there exists $M=M(R)>0$ such that
\begin{equation*}
\Vert \Phi _{a}^{\prime }(u)-\Phi _{a}^{\prime }(v)\Vert \leq M\Vert
u-v\Vert ^{\tau }\ \text{and}\ \Vert \Phi _{a}^{\prime \prime }(u)-\Phi
_{a}^{\prime \prime }(v)\Vert \leq M\Vert u-v\Vert ^{\tau }
\end{equation*}%
for every $u,v\in B(0,R)$. Finally, suppose that there exist $w_{1},w_{2}\in
S_{\mu }$ independent of $\rho $ such that
\begin{equation*}
c_{a}:=\inf\limits_{\gamma \in \Gamma }\max\limits_{t\in \lbrack 0,1]}\Phi
_{a}(\gamma (t))>\max \{\Phi _{a}(w_{1}),\Phi _{a}(w_{2})\}\ \text{for all}\
a\in I,
\end{equation*}%
where
\begin{equation*}
\Gamma :=\{\gamma \in C([0,1],S_{\mu }):\gamma (0)=w_{1},\ \gamma
(1)=w_{2}\}.
\end{equation*}%
Then for almost every $\rho \in I,$ there exists a sequence $%
\{u_{n}\}\subset S_{\mu }$ such that\newline
$(i)$ $\Phi _{a}(u_{n})\rightarrow c_{a};$ $(ii)$ $\Phi _{a}^{\prime
}|_{S_{\mu }}(u_{n})\rightarrow 0;$ $(iii)$ $\{u_{n}\}$ is bounded in $E.$
\end{theorem}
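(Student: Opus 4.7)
The statement is the parameterized minimax theorem of Borthwick-Chang-Jeanjean-Soave, so the plan is to follow Jeanjean's classical monotonicity trick, adapted to the constraint manifold $S_\mu$ and strengthened to produce sequences that are bounded in $E$.

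The first step is to observe that, since $B(u)\geq 0$ and $\Phi_a(u)=A(u)-aB(u)$, for any path $\gamma\in\Gamma$ the map $a\mapsto\sup_{t\in[0,1]}\Phi_a(\gamma(t))$ is non-increasing on $I$; taking the infimum over $\gamma$ shows that $a\mapsto c_a$ is non-increasing on $I$. By Lebesgue's differentiation theorem for monotone functions, $c_a$ is differentiable at almost every $a_0\in I$, with finite derivative $c'_{a_0}$. I would fix such an $a_0$ and work at this single parameter.

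The second step is the extraction of an almost-optimal sequence of paths. I would choose $a_n\nearrow a_0$ and, by definition of $c_{a_n}$, pick $\gamma_n\in\Gamma$ with $\sup_{t}\Phi_{a_n}(\gamma_n(t))\leq c_{a_n}+(a_0-a_n)$. For any $t$ such that $\Phi_{a_0}(\gamma_n(t))$ is close to $c_{a_0}$, the identity $\Phi_{a_0}(u)-\Phi_{a_n}(u)=(a_n-a_0)B(u)$ together with the finite difference estimate
\begin{equation*}
B(u)\leq \frac{c_{a_n}-c_{a_0}}{a_0-a_n}+o(1)
\end{equation*}
yields a uniform bound on $B(u)$ along the relevant portion of $\gamma_n$. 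Combined with the coercivity assumption ($A(u)\to\infty$ or $B(u)\to\infty$ as $\|u\|\to\infty$) and the decomposition $A(u)=\Phi_{a_0}(u)+a_0 B(u)$, this yields a uniform bound on $\|u\|$ for all such $u$. From this one constructs $\{u_n\}\subset S_\mu$ satisfying $(i)$ and $(iii)$ directly.

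The third step, and the main obstacle, is to upgrade this to a constrained Palais--Smale condition $(ii)$. On the free space one would apply a standard deformation/pseudogradient argument around the minimax level; on the manifold $S_\mu$ one must instead work with the tangent projection $\Phi_a'|_{S_\mu}$ and build a deformation that preserves $S_\mu$. This is where the $C^2$ hypothesis together with the $\tau$-Hölder continuity of $\Phi_a'$ and $\Phi_a''$ on bounded sets becomes indispensable: it provides locally Lipschitz pseudogradient vector fields on $S_\mu$ and quantitative control of the second fundamental form along the retraction onto $S_\mu$, enabling the deformation to lower the energy uniformly on the bounded region identified in the previous step without leaving $S_\mu$. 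Assuming, toward a contradiction, that no bounded $(PS)_{c_{a_0}}$ sequence exists, the deformation would produce a path $\widetilde\gamma$ with $\sup_t \Phi_{a_0}(\widetilde\gamma(t))<c_{a_0}$, contradicting the definition of $c_{a_0}$. The delicate point throughout is to carry out the monotonicity trick entirely inside the bounded set obtained from Step 2, so that the deformation flow remains in the range where the Hölder bounds give effective estimates.
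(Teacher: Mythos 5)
The paper never proves this statement: Theorem \ref{L4.5} is imported verbatim as \cite[Theorem 1.5]{BCJS} and used as a black box, so there is no internal proof to compare against, and the relevant benchmark is the original argument of Borthwick--Chang--Jeanjean--Soave. Measured against that, your sketch has the right skeleton for the conclusions actually quoted here: monotonicity of $a\mapsto c_a$ from $B\geq 0$, a.e.\ differentiability, almost-optimal paths at parameters $a_n\nearrow a_0$, the finite-difference bound on $B$ (and hence on $A=\Phi_{a_0}+a_0B$) along the high-energy portions of those paths, coercivity yielding a uniform norm bound, and finally a localized, constraint-preserving deformation argument giving a contradiction if no bounded Palais--Smale sequence at level $c_{a_0}$ exists. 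This is precisely Jeanjean's 1997 monotonicity trick in its $L^2$-constrained form, which is indeed the backbone of the proof in \cite{BCJS}. Two corrections are in order, though. First, your finite-difference estimate should carry an $O(1)$ constant rather than an $o(1)$ term: for $t$ with $\Phi_{a_0}(\gamma_n(t))\geq c_{a_0}-(a_0-a_n)$ one gets $B(\gamma_n(t))\leq \frac{c_{a_n}-c_{a_0}}{a_0-a_n}+2$, the $2$ coming from the two tolerances; differentiability of $c$ at $a_0$ still makes this a uniform bound, so the conclusion stands. Second, you misattribute the role of the second-derivative hypotheses: constructing a locally Lipschitz pseudogradient field on $S_\mu$ and a deformation that preserves the constraint and the endpoints requires only $C^1$ regularity, so the bare statements $(i)$--$(iii)$ do not need the $\tau$-H\"{o}lder control of $\Phi_a''$ at all. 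Those assumptions are what allow \cite{BCJS} to prove the genuinely new, stronger version of this theorem, namely Palais--Smale sequences carrying approximate Morse-index information via a deformation that tracks second-order behavior; that refinement is exactly the part of their result which the present paper's statement omits. In short, your plan is sound for what is asserted here, but what it reconstructs is Jeanjean's older theorem rather than the refinement that makes \cite{BCJS} new.
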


\begin{proposition}
\label{L4.6} For any $R>R_{\rho }$ and $0<|\alpha |<\alpha _{R,\rho }^{\ast
} $, Problem (\ref{E28}) has a mountain-pass type solution for almost every $%
s\in \lbrack 1/2,1]$.
\end{proposition}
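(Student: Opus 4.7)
The plan is to apply the parameterized monotonicity principle Theorem \ref{L4.5} to the family $\widetilde{J}_{R,s} = A - sB$ on $E = H_0^1(\Omega_R)$, $H = L^2(\Omega_R)$, $\mu = \rho$, and $I = [1/2,1]$, where
\[
A(u) := \tfrac{1}{2}\|\nabla u\|_{2,\Omega_R}^2 + \tfrac{\alpha}{4}\chi_0(u^2,u^2), \qquad B(u) := \tfrac{1}{p}\|u\|_{p,\Omega_R}^p \geq 0.
\]
To verify the required coercivity, I decompose $\chi_0 = \chi_1 - \chi_2$: the bound $0 \leq \chi_1(u^2,u^2) \leq \log(1+R)\rho^2$ on $\Omega_R \times \Omega_R$ together with (\ref{E7}) and Gagliardo--Nirenberg (\ref{E6}) yields $|\chi_2(u^2,u^2)| \leq \mathcal{C}\mathcal{C}_{8/3}^{3/2}\rho^{3/2}\|\nabla u\|_{2,\Omega_R}$, so that $A(u) \geq \tfrac{1}{2}\|\nabla u\|_{2,\Omega_R}^2 - \tfrac{|\alpha|}{4}\mathcal{C}\mathcal{C}_{8/3}^{3/2}\rho^{3/2}\|\nabla u\|_{2,\Omega_R} + \tfrac{\alpha}{4}\log(1+R)\rho^2 \to +\infty$ as $\|u\|_E \to \infty$ on $S_{\Omega_R,\rho}$. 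The $C^2$ regularity and local Hölder continuity of $\widetilde{J}_{R,s}'$ and $\widetilde{J}_{R,s}''$ are standard, since $p>4$ and (\ref{E7}) controls the log-convolution.

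Next I exhibit $s$-independent mountain-pass endpoints. Set $w_1 := \psi_R$, which by (\ref{E18}) satisfies $\|\nabla w_1\|_{2,\Omega_R} < x_*$ for $R > R_\rho$, and $w_2 := (\psi_R)_{T^*}$ with $T^* > 1$ large and fixed. A scaling computation as in Theorem \ref{L3.2} gives
\[
\widetilde{J}_{R,s}((\psi_R)_T) = \tfrac{T^2}{2}\|\nabla \psi_R\|_{2,\Omega_R}^2 + \tfrac{\alpha}{4}\chi_0(\psi_R^2,\psi_R^2) - \tfrac{\alpha \rho^2}{4}\log T - \tfrac{sT^{p-2}}{p}\|\psi_R\|_{p,\Omega_R}^p,
\]
which tends to $-\infty$ as $T \to \infty$ uniformly in $s \in [1/2,1]$ because $p>4$. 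Fixing $T^*$ with $\widetilde{J}_{R,s}(w_2) < 0$ and $\|\nabla w_2\|_{2,\Omega_R} > x_*$ for all $s \in I$, and setting
\[
c_s := \inf_{\gamma \in \Gamma}\max_{t \in [0,1]}\widetilde{J}_{R,s}(\gamma(t)), \quad \Gamma := \{\gamma \in C([0,1], S_{\Omega_R,\rho}) : \gamma(0)=w_1,\ \gamma(1)=w_2\},
\]
every admissible path must cross $\{\|\nabla u\|_{2,\Omega_R} = x_*\}$, on which Lemma \ref{L4.1} and the identity $\widetilde{J}_{R,s}(u) - \widetilde{J}_R(u) = \tfrac{1-s}{p}\|u\|_{p,\Omega_R}^p \geq 0$ give $\widetilde{J}_{R,s}(u) \geq f(x_*) > 0$. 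Meanwhile (\ref{E27}) and $\|\psi_R\|_{p,\Omega_R}^p \leq R^{-(p-2)}\mathcal{C}_p(\lambda_1(\Omega)\rho)^{(p-2)/2}\rho$ imply $\widetilde{J}_{R,s}(w_1) \to 0^+$ uniformly in $s$ as $R \to \infty$, while $f(x_*)$ stays uniformly positive thanks to (\ref{E26}); thus, enlarging $R_\rho$ if needed, $c_s > \max\{\widetilde{J}_{R,s}(w_1), \widetilde{J}_{R,s}(w_2)\}$ for every $s \in I$.

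Theorem \ref{L4.5} then supplies, for a.e. $s \in [1/2,1]$, a bounded Palais--Smale sequence $\{u_n\} \subset S_{\Omega_R,\rho}$ for $\widetilde{J}_{R,s}|_{S_{\Omega_R,\rho}}$ at level $c_s$. The final and main obstacle is the compactness step: converting this PS sequence into an actual critical point on the constraint. I extract Lagrange multipliers $\lambda_n$ from $\widetilde{J}_{R,s}'(u_n) + \lambda_n u_n \to 0$ in $H^{-1}(\Omega_R)$; testing against $u_n$ shows $\{\lambda_n\}$ is bounded, so $\lambda_n \to \lambda \in \mathbb{R}$ along a subsequence. Boundedness in $H_0^1(\Omega_R)$ together with the Rellich--Kondrachov theorem gives $u_n \rightharpoonup u$ in $H_0^1(\Omega_R)$ and $u_n \to u$ strongly in every $L^q(\Omega_R)$, $q \in [1,\infty)$. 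The splitting $\chi_0 = \chi_1 - \chi_2$ combined with the bounded kernel of $\chi_1$ on $\Omega_R \times \Omega_R$ and the Hardy--Littlewood--Sobolev bound (\ref{E7}) upgrades these to the convergences $\chi_0(u_n^2,u_n^2) \to \chi_0(u^2,u^2)$ and $(\log|\cdot|\ast u_n^2)u_n \to (\log|\cdot|\ast u^2)u$ in $H^{-1}(\Omega_R)$. Testing the PS relation against $u_n - u$ then annihilates every term save $\int_{\Omega_R}\nabla u_n \cdot \nabla(u_n-u)\,dx$, which forces $u_n \to u$ strongly in $H_0^1(\Omega_R)$. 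Hence $u \in S_{\Omega_R,\rho}$ is a critical point of $\widetilde{J}_{R,s}$ at level $c_s > 0$, providing the desired mountain-pass solution of Problem (\ref{E28}) with Lagrange multiplier $\lambda$.
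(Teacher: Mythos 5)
Your proposal is correct and follows the same overall strategy as the paper: invoke the Borthwick--Chang--Jeanjean--Soave result (Theorem \ref{L4.5}) to produce a bounded Palais--Smale sequence for $\widetilde{J}_{R,s}$ for almost every $s$, then recover compactness on the bounded domain via bounded Lagrange multipliers, Rellich--Kondrachov, the bounded kernel $\log(1+|x-y|)\leq\log(1+2R)$ together with (\ref{E7}), and strong $H_{0}^{1}$-convergence. Where you genuinely differ is in the min-max geometry. The paper feeds Theorem \ref{L4.5} with the class $\Gamma_{R,\rho}$ of Lemma \ref{L4.4}, whose paths are anchored at the local minimizer $v_{R,s}$ of the modified functional, so it first has to rerun the argument of Proposition \ref{L4.2} for every $s$; note also that $v_{R,s}$ depends on $s$, which sits somewhat awkwardly with the requirement in Theorem \ref{L4.5} that the endpoints be independent of the parameter. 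You instead bypass Lemma \ref{L4.4} and the $s$-dependent local minimizers entirely by exhibiting explicit, $s$-independent endpoints $w_{1}=\psi_{R}$ and $w_{2}=(\psi_{R})_{T^{*}}$, with the separation $c_{s}\geq f(x_{*})>\max\{\widetilde{J}_{R,s}(w_{1}),\widetilde{J}_{R,s}(w_{2})\}$ coming from the crossing of the sphere $\|\nabla u\|_{2,\Omega_R}=x_{*}$ and the monotonicity $\widetilde{J}_{R,s}\geq\widetilde{J}_{R}$ for $s\leq1$; this matches the hypotheses of Theorem \ref{L4.5} literally, and you also verify the structural assumptions (coercivity of $A$, $B\geq0$, H\"older continuity) that the paper leaves implicit. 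Two prices are paid for this cleaner route. First, since $\widetilde{J}_{R,s}(\psi_{R})$ exceeds $\widetilde{J}_{R}(\psi_{R})$ when $s<1$, the endpoint estimate is not a direct consequence of (\ref{E17}) and requires enlarging $R_{\rho}$ (or shrinking $\alpha_{R,\rho}^{\ast}$), which you correctly acknowledge via (\ref{E26})--(\ref{E27}); this is harmless for Theorem \ref{T1.3} since $R_\rho$ is only asserted to exist. Second, your min-max level $c_{s}$ is not manifestly the same as the level $\overline{C}_{R,s,\rho}$ of Lemma \ref{L4.4}, and it is the latter whose upper and lower bounds ((\ref{E37}) and Lemma \ref{L4.12}) drive the asymptotic analysis in the proof of Theorem \ref{T1.5}; for the statement of Proposition \ref{L4.6} itself this is irrelevant, but to splice your argument into the rest of the paper one would need to identify, or at least comparably estimate, the two levels. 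Your compactness step, testing the Palais--Smale relation against $u_{n}-u$, is equivalent in substance to the paper's Brezis--Lieb-type splitting $w_{n}=v_{n}-v_{R,s}$.
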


\begin{proof}
According to Lemma \ref{L4.4} and Theorem \ref{L4.5}, we conclude that for
almost every $s\in \lbrack 1/2,1]$, there exists a bounded Palais-Smale
sequence $\{v_{n}\}\subset \mathcal{S}_{\Omega _{R},\rho }$ such that
\begin{equation}
\widetilde{J}_{R,s}(v_{n})\rightarrow \overline{C}_{R,s,\rho }\ \text{and}\
\widetilde{J}_{R,s}^{\prime }(v_{n})|_{_{\mathcal{S}_{\Omega _{R},\rho
}}}\rightarrow 0\ \text{as}\ n\rightarrow \infty .  \label{E29}
\end{equation}%
By (\ref{E29}), there exists a sequence $\{\lambda _{R,n}\}\subset \mathbb{R}
$ such that
\begin{equation}
\widetilde{J}_{R,s}^{\prime }(v_{n})+\lambda _{R,n}v_{n}=o_{n}(1)\text{ in }%
H_{0}^{-1}\left( \Omega _{R}\right) .  \label{E30}
\end{equation}%
From (\ref{E29})--(\ref{E30}), it follows that%
\begin{eqnarray*}
&&|\lambda _{R,n}|\rho \\
&=&\left\vert -\int_{\Omega _{R}}|\nabla v_{n}|^{2}dx-\alpha \int_{\Omega
_{R}}\int_{\Omega _{R}}\log \left( |x-y|\right)
|v_{n}(x)|^{2}|v_{n}(y)|^{2}dxdy+s\int_{\Omega _{R}}|v_{n}|^{p}dx\right\vert
\\
&\leq &\Vert \nabla v_{n}\Vert _{2,\Omega_{R}}^{2}-\alpha \rho ^{2}\log
(1+R)-\alpha \mathcal{CC}_{8/3}^{3/2}\rho ^{3/2}\Vert \nabla v_{n}\Vert
_{2,\Omega_{R}}+\mathcal{C}_{p}\rho \Vert \nabla v_{n}\Vert
_{2,\Omega_{R}}^{p-2},
\end{eqnarray*}%
which implies that $\{\lambda _{R,n}\}$ is bounded. Since $\{\lambda
_{R,n}\},\{v_{n}\}$ are bounded, there exists $(\lambda _{R,s},v_{R,s})\in
\mathbb{R}\times H_{0}^{1}\left( \Omega _{R}\right) $ such that up to a
subsequence,
\begin{equation*}
\lambda _{R,n}\rightarrow \lambda _{R,s}\text{ in }%
\mathbb{R}
,\text{ }v_{n}\rightharpoonup v_{R,s}\ \text{in}\ H_{0}^{1}\left( \Omega
_{R}\right) \ \text{and }v_{n}\rightarrow v_{R,s}\ \text{in}\ L^{q}\left(
\Omega _{R}\right) \ \text{for any}\ q\geq 1,
\end{equation*}%
and
\begin{equation}
\int_{\Omega _{R}}|\nabla v_{R,s}|^{2}dx+\alpha \int_{\Omega
_{R}}\int_{\Omega _{R}}\log \left( |x-y|\right)
|v_{R,s}(x)|^{2}|v_{R,s}(y)|^{2}dxdy-s\int_{\Omega
_{R}}|v_{R,s}|^{p}dx+\lambda _{R,s}\rho =0.  \label{E31}
\end{equation}%
Define $w_{n}=v_{n}-v_{R,s}$, then similar to the argument of Proposition %
\ref{L4.2}, we have%
\begin{equation*}
\Vert w_{n}\Vert _{p,\Omega _{R}}^{p}=o_{n}(1)\ \text{and}\ \int_{\Omega
_{R}}\int_{\Omega _{R}}\log \left( |x-y|\right)
|w_{n}(x)|^{2}|w_{n}(y)|^{2}dxdy=o_{n}(1).
\end{equation*}%
Using this and (\ref{E31}), we deduce that
\begin{eqnarray*}
o_{n}(1) &=&\int_{\Omega _{R}}|\nabla v_{n}|^{2}dx+\alpha \int_{\Omega
_{R}}\int_{\Omega _{R}}\log \left( |x-y|\right)
|v_{n}(x)|^{2}|v_{n}(y)|^{2}dxdy \\
&&-s\int_{\Omega _{R}}|v_{n}|^{p}dx+\lambda _{R,n}\int_{\Omega
_{R}}|v_{n}|^{2}dx \\
&=&\int_{\Omega _{R}}|\nabla v_{R,s}|^{2}dx+\int_{\Omega _{R}}|\nabla
w_{n}|^{2}dx+\alpha \int_{\Omega _{R}}\int_{\mathbb{R}^{2}}\log \left(
|x-y|\right) |v_{R,s}(x)|^{2}|v_{R,s}(y)|^{2}dxdy \\
&&-s\int_{\mathbb{R}^{2}}|v_{R,s}|^{p}dx+\lambda _{R,s}\rho +o_{n}(1) \\
&=&\int_{\Omega _{R}}|\nabla w_{n}|^{2}dx+o_{n}(1),
\end{eqnarray*}%
which indicates that $v_{n}\rightarrow v_{R,s}$ in $H_{0}^{1}\left( \Omega
_{R}\right) $. The proof is complete.
\end{proof}

\begin{lemma}
\label{L4.7} Assume that $\Omega $ is a star-shaped bounded domain. For any $%
\rho >0$ fixed, the set of solutions of Problem (\ref{E28}) is bounded
uniformly on $s$ and $R$.
\end{lemma}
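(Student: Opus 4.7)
Since $\|u\|_{2,\Omega_R}^{2}=\rho$ for every solution $u$ of (\ref{E28}), it suffices to bound $\|\nabla u\|_{2,\Omega_R}$ uniformly in $R>R_\rho$ and $s\in[1/2,1]$ for the two families produced in Section~3.2. Following \cite[Lemma 2.7]{CJ2019}, I would first derive the Pohozaev identity for (\ref{E28}),
\[
\|\nabla u\|_{2,\Omega_R}^{2}-\tfrac{s(p-2)}{p}\|u\|_{p,\Omega_R}^{p}-\tfrac{\alpha\rho^{2}}{4}=\tfrac{1}{2}\int_{\partial\Omega_R}|\nabla u|^{2}\,x\cdot n\,d\sigma\ge 0,
\]
where the non-negativity is forced by the star-shapedness of $\Omega$. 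Combining with the crude bound $\chi_0(u^{2},u^{2})\le\chi_1(u^{2},u^{2})\le\log(1+R)\rho^{2}$ in the expression for $\widetilde{J}_{R,s}(u)$ then gives
\[
\|\nabla u\|_{2,\Omega_R}^{2}\le\tfrac{2(p-2)}{p-4}\widetilde{J}_{R,s}(u)+K,
\]
with $K$ depending only on $p,\rho$, using (\ref{E26}) to control $|\alpha|\log(1+R)$ uniformly. The question is thereby reduced to a uniform bound on $\widetilde{J}_{R,s}(u)$.

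For the local minimizer $v_{R,s}\in Q_{\rho,R}$ this is immediate, since $\|\nabla v_{R,s}\|_{2,\Omega_R}\le x_\ast$ by construction and $\widetilde{J}_{R,s}(v_{R,s})=C_{R,s,\rho}$ is bounded as in (\ref{E27}). For the mountain-pass solution the task is to bound $\overline{C}_{R,s,\rho}$ uniformly, and here the canonical test path $(v_{R,s})_{t+1}$ of Lemma~\ref{L4.4} is unsuitable because $\|v_{R,s}\|_{p,\Omega_R}^{p}$ can degenerate like $R^{-(p-2)}$, which blows up the fiber maximum. Instead I would use a two-piece admissible path anchored to $\psi_R(x)=R^{-1}\psi(R^{-1}x)$,
\[
\gamma(t)=\begin{cases}\sqrt{\rho}\,\dfrac{(1-t)v_{R,s}+t\psi_R}{\|(1-t)v_{R,s}+t\psi_R\|_{2,\Omega_R}},& t\in[0,1],\\[2mm]
(\psi_R)_t:=t\psi_R(tx),& t\ge 1,\end{cases}
\]
after replacing $v_{R,s}$ by $|v_{R,s}|$ (legitimate since $\widetilde{J}_{R,s}(|v_{R,s}|)\le\widetilde{J}_{R,s}(v_{R,s})$). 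Since both functions are non-negative with $L^{2}$-norm $\sqrt{\rho}$, the cross-term is non-negative and $\|(1-t)v_{R,s}+t\psi_R\|_{2,\Omega_R}\ge\sqrt{\rho/2}$, so the normalized segment stays in $\mathcal{S}_{\Omega_R,\rho}$ with uniformly bounded $H^{1}$-norm, and consequently $\widetilde{J}_{R,s}$ is uniformly bounded along it.

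On the scaling piece, the identity $\chi_0(\psi_R^{2},\psi_R^{2})=\chi_0(\psi^{2},\psi^{2})+\rho^{2}\log R$ together with the substitution $\tau=t/R$ yields the crucial $R$-independent formula
\[
\widetilde{J}_{R,s}((\psi_R)_t)=\tfrac{\tau^{2}}{2}\lambda_1(\Omega)\rho+\tfrac{\alpha}{4}\chi_0(\psi^{2},\psi^{2})-\tfrac{\alpha\rho^{2}}{4}\log\tau-\tfrac{s\tau^{p-2}}{p}\|\psi\|_{p}^{p},
\]
in which the $\log R$ contributions from the nonlocal term and from the dilation cancel exactly. The right-hand side tends to $-\infty$ both as $\tau\to 0^{+}$ (via $-\alpha\log\tau$, since $\alpha<0$) and as $\tau\to\infty$, so its supremum is finite and bounded uniformly as $(s,\alpha)$ vary in their compact range. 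For $t$ sufficiently large one also gets $\|\nabla(\psi_R)_t\|_{2,\Omega_R}>x_\ast$ and $\widetilde{J}_{R,s}((\psi_R)_t)<0$, so $\gamma\in\Gamma_{R,\rho}$ and thus $\overline{C}_{R,s,\rho}\le\sup_t\widetilde{J}_{R,s}(\gamma(t))\le K'$ uniformly; plugging into the Pohozaev bound above closes the argument. The main obstacle is precisely this $\log R$ cancellation on the scaling piece, which is what forces one to anchor the path to the eigenfunction $\psi$ (whose scaling symmetry absorbs the problematic logarithm) rather than to $v_{R,s}$ itself.
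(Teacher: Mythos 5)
Your proposal is correct, and its first half is exactly the paper's argument: the Pohozaev identity for (\ref{E28}) with the boundary term made nonnegative by star-shapedness, the crude estimate $\tfrac{\alpha}{4}\chi_{0}(u^{2},u^{2})\ge\tfrac{\alpha}{4}\rho^{2}\log(1+R)$, and (\ref{E26}) reduce the lemma to a bound on the energy levels, uniform in $R$ and $s$. Where you genuinely depart from the paper is in that second step. The paper disposes of it by citing Lemma \ref{L4.4}, i.e.\ the bound $\overline{C}_{R,s,\rho}\le\max_{t>1}\widetilde{J}_{R,s}\left((v_{R,s})_{t}\right)$, without ever verifying that this maximum is uniform in $R$; you instead build a two-piece admissible path anchored at $\psi_{R}$ and use the exact cancellation of $\rho^{2}\log R$ between $\chi_{0}(\psi_{R}^{2},\psi_{R}^{2})$ and the dilation term, which makes the resulting bound on $\overline{C}_{R,s,\rho}$ manifestly independent of $R$, $s\in[1/2,1]$ and $|\alpha|<\alpha_{R,\rho}^{\ast}$; your treatment is thus more self-contained precisely at the point the paper glosses over. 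One caveat: your stated reason for rejecting the paper's path is not correct. The fiber maximum is of order $\left(\|\nabla v_{R,s}\|_{2}^{2(p-2)}\,\|v_{R,s}\|_{p}^{-2p}\right)^{1/(p-4)}$ up to small logarithmic corrections, and since $C_{R,s,\rho}\le\widetilde{J}_{R,s}(\psi_{R})=O(R^{-2})$ forces $\|\nabla v_{R,s}\|_{2}^{2}=O(R^{-2})$ (by monotonicity of $f$ on $(0,x_{\ast})$), while H\"{o}lder gives $\|v_{R,s}\|_{p}^{p}\ge\rho^{p/2}|\Omega_{R}|^{-(p-2)/2}$, the two degeneracies cancel and the paper's path also yields an $O(1)$ bound; so both routes work, and what yours buys is that it does not require these extra estimates, which the paper indeed never supplies. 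A cosmetic point: since $\Gamma_{R,\rho}$ requires $\gamma(0)=v_{R,s}$, you should fix the local minimizer to be nonnegative once and for all (legitimate, as $\widetilde{J}_{R,s}(|v|)=\widetilde{J}_{R,s}(v)$, so $|v_{R,s}|$ is again a minimizer) rather than replace it inside the path.
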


\begin{proof}
Let $u$ be a solution of Problem (\ref{E28}). Then for $0<|\alpha |<\alpha
_{R,\rho }^{0}$ and $R>R_{\rho }$, by (\ref{E25}) one has
\begin{eqnarray*}
\widetilde{J}_{R,s}(u) &=&\frac{p-4}{2(p-2)}\int_{\Omega _{R}}|\nabla
u|^{2}dx+\frac{\alpha \rho ^{2}}{4(p-2)}+\frac{1}{2(p-2)}\int_{\partial
\Omega _{R}}|\nabla u|^{2}x\cdot nd\sigma \\
&&+\frac{\alpha }{4}\int_{\Omega _{R}}\int_{\Omega _{R}}\log
(|x-y|)|u(x)|^{2}|u(y)|^{2}dxdy \\
&\geq &\frac{p-4}{2(p-2)}\int_{\Omega _{R}}|\nabla u|^{2}dx+\frac{\alpha
\rho ^{2}}{4(p-2)}+\frac{\alpha }{4}\int_{\Omega _{R}}\int_{\Omega _{R}}\log
(1+|x-y|)|u(x)|^{2}|u(y)|^{2}dxdy \\
&\geq &\frac{p-4}{2(p-2)}\int_{\Omega _{R}}|\nabla u|^{2}dx+\frac{\alpha
\rho ^{2}}{4(p-2)}+\frac{\alpha }{4}\log (1+R)\rho ^{2} \\
&\geq &\frac{p-4}{2(p-2)}\int_{\Omega _{R}}|\nabla u|^{2}dx+\frac{\alpha
\rho ^{2}}{4(p-2)}-\frac{1}{2}\left( \frac{p}{(p-2)\rho \mathcal{C}_{p}}%
\right) ^{\frac{2}{p-4}},
\end{eqnarray*}%
which shows that
\begin{equation*}
\int_{\Omega _{R}}|\nabla u|^{2}dx\leq \frac{2(p-2)}{p-4}\left[ \widetilde{J}%
_{R,s}(u)-\frac{\alpha \rho ^{2}}{4(p-2)}+\frac{1}{2}\left( \frac{p}{%
(p-2)\rho \mathcal{C}_{p}}\right) ^{\frac{2}{p-4}}\right] .
\end{equation*}%
According to this and the fact that $\widetilde{J}_{R,s}(u)$ is bounded by
Lemma \ref{L4.4}, we complete the proof.
\end{proof}

\begin{theorem}
\label{L4.8} For any $R>R_{\rho }$ and $0<|\alpha |<\alpha _{R,\rho }^{\ast
} $, Problem (\ref{E5}) admits a mountain-pass solution $u_{R}^{1}\in
H_{0}^{1}\left( \Omega _{R}\right) $ for some Lagrange multiplier $\lambda
_{R}^{1}\in \mathbb{R}$.
\end{theorem}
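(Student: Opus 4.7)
The plan is to apply a monotonicity/truncation trick along the parameter $s$: Proposition \ref{L4.6} already produces a mountain-pass critical point $v_{R,s}\in H_0^1(\Omega_R)$ of $\widetilde{J}_{R,s}|_{\mathcal{S}_{\Omega_R,\rho}}$, together with a Lagrange multiplier $\lambda_{R,s}\in\mathbb{R}$, for almost every $s\in[1/2,1]$. One then picks a sequence $s_n\to 1^{-}$ of such admissible values and seeks the desired solution of Problem (\ref{E5}) (which corresponds to $s=1$) as a strong $H_0^1$ limit of $v_{R,s_n}$.

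The first step is to obtain uniform bounds. By Lemma \ref{L4.7} (where the star-shapedness of $\Omega$ is used via the boundary term in the Pohozaev identity (\ref{E24})--(\ref{E25})) the family $\{v_{R,s_n}\}$ is bounded in $H_0^1(\Omega_R)$, uniformly in $n$. Testing the equation in (\ref{E28}) against $v_{R,s_n}$, isolating $\lambda_{R,s_n}\rho$, and bounding the four terms on the right by the $H_0^1$-norm, the logarithmic estimate $|\chi_0(u^2,u^2)|\le\log(1+R)\|u\|_{2,\Omega_R}^{4}+\mathcal{C}\,\mathcal{C}_{8/3}^{3/2}\rho^{3/2}\|\nabla u\|_{2,\Omega_R}$ used already in Proposition \ref{L4.6}, and the Gagliardo--Nirenberg inequality (\ref{E6}), we get that $\{\lambda_{R,s_n}\}$ is also bounded in $\mathbb{R}$. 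So, up to a subsequence, $\lambda_{R,s_n}\to\lambda_R^{1}$ and $v_{R,s_n}\rightharpoonup u_R^{1}$ in $H_0^{1}(\Omega_R)$, with $v_{R,s_n}\to u_R^{1}$ strongly in every $L^{q}(\Omega_R)$, $q\ge 1$, because $\Omega_R$ is bounded.

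Next I upgrade weak to strong convergence in $H_0^1(\Omega_R)$. Passing to the limit in the weak formulation of (\ref{E28}) gives that $(\lambda_R^1,u_R^1)$ solves Problem (\ref{E5}). Setting $w_n:=v_{R,s_n}-u_R^1$, the same two ingredients used in Propositions \ref{L4.2} and \ref{L4.6} — namely $\|w_n\|_{p,\Omega_R}^{p}=o_n(1)$ from compact Sobolev embedding, and
\[
\Bigl|\int_{\Omega_R}\!\!\int_{\Omega_R}\log(|x-y|)\,|w_n(x)|^{2}|w_n(y)|^{2}\,dx\,dy\Bigr|\le \log(1+R)\|w_n\|_{2,\Omega_R}^{4}+\mathcal{C}\|w_n\|_{8/3,\Omega_R}^{4}=o_n(1)
\]
from (\ref{E7}) — reduce the identity obtained by testing $(\widetilde J_{R,s_n}'+\lambda_{R,s_n})\,v_{R,s_n}=o_n(1)$ against $v_{R,s_n}$ to $\|\nabla w_n\|_{2,\Omega_R}^{2}=o_n(1)$. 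Hence $v_{R,s_n}\to u_R^{1}$ strongly in $H_0^1(\Omega_R)$, $\int_{\Omega_R}|u_R^1|^2\,dx=\rho$, and $u_R^1\in H_0^1(\Omega_R)$ solves (\ref{E5}) with multiplier $\lambda_R^1$.

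Finally I need to argue that $u_R^1$ is genuinely a mountain-pass solution distinct from the local minimizer $u_R^0$ of Theorem \ref{T1.3}$(i)$. By Lemma \ref{L4.4}, $\overline C_{R,s_n,\rho}\ge f(x_\ast)>0$, where $f(x_\ast)$ is independent of $s_n$. Since $\widetilde J_{R,s_n}(v_{R,s_n})=\overline C_{R,s_n,\rho}$ and $\widetilde J_{R,s_n}\to\widetilde J_R$ uniformly on bounded subsets as $s_n\to 1^{-}$, the strong convergence gives $\widetilde J_R(u_R^1)\ge f(x_\ast)$. On the other hand, by Proposition \ref{L4.2}, $\widetilde J_R(u_R^0)=C_{R,\rho}$ is attained inside $Q_{\rho,R}$, and by (\ref{E17})--(\ref{E22}) in its proof one has $C_{R,\rho}<f(x_\ast)\le\widetilde J_R(u_R^1)$. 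Thus $u_R^1\ne u_R^0$, and $u_R^1$ is the claimed mountain-pass solution. The main technical obstacle in the argument above is the joint passage to the limit in $s_n$ and in the weak convergence of $v_{R,s_n}$: without the star-shapedness of $\Omega$, the Pohozaev-type bound of Lemma \ref{L4.7} fails and one loses the uniform $H_0^1$ control on the approximating mountain-pass critical points.
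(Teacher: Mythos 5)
Your proposal is correct and follows essentially the same route as the paper: obtain mountain-pass critical points $(\lambda_{R,s},v_{R,s})$ of $\widetilde{J}_{R,s}$ for a.e.\ $s\in[1/2,1]$ from Proposition \ref{L4.6}, use the uniform bound of Lemma \ref{L4.7} (where star-shapedness enters through the Pohozaev boundary term), and pass to the limit $s\to 1$ by repeating the compactness argument of Proposition \ref{L4.6}. The paper compresses the multiplier bound, the strong-convergence upgrade, and the energy-level comparison into the phrase ``similar to the argument of Proposition \ref{L4.6}'', whereas you write these steps out explicitly — in particular your verification that $\widetilde{J}_R(u_R^1)\ge f(x_\ast)>C_{R,\rho}=\widetilde{J}_R(u_R^0)$, hence $u_R^1\neq u_R^0$, is a detail the paper needs for Theorem \ref{T1.3} $(iii)$ but leaves implicit.
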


\begin{proof}
By Proposition \ref{L4.6}, there exists a solution $(\lambda _{R,s},v_{R,s})$
to Problem (\ref{E28}) for almost every $s\in \left[ \frac{1}{2},1\right] $.
Furthermore, $v_{R,s}$ is bounded uniformly on $s$ by Lemma \ref{L4.7}.
Similar to the argument of Proposition \ref{L4.6}, there exist $u_{R}^{1}\in
\mathcal{S}_{R,\rho }$ and $\lambda _{R}^{1}\in \mathbb{R}$ such that up to
a subsequence,
\begin{equation*}
\lambda _{R,s}\rightarrow \lambda _{R}^{1}\text{ in }%
\mathbb{R}
\text{ and }v_{R,s}\rightarrow u_{R}^{1}\ \text{in}\ H_{0}^{1}(\Omega _{R})\
\text{as}\ s\rightarrow 1.
\end{equation*}%
So $(\lambda _{R}^{1},u_{R}^{1})$ is a solution of problem (\ref{E5}). The
proof is complete.
\end{proof}

\textbf{Proof of Theorem \ref{T1.3} $(iii)$:} By Theorem
\ref{L4.8}, we directly conclude that Theorem \ref{T1.3} $(iii)$ holds.

\subsection{The asymptotic behavior of solutions}

For convenience, let $\Omega _{R}=B_{R}(0)$. We study the asymptotic
behavior as $R\rightarrow \infty .$ Consider the following problem
\begin{equation}
\left\{
\begin{array}{ll}
-\Delta u+\lambda u=|u|^{p-2}u, & \ \text{in}\ \mathbb{R}^{2} \\
\int_{\mathbb{R}^{2}}|u|^{2}dx=\rho >0. &
\end{array}%
\right.  \label{E32}
\end{equation}%
Define
\begin{equation*}
m_{\rho }=\inf_{u\in \mathcal{P}_{\rho ,0}}J_{0}(u),
\end{equation*}%
where $J_{0}(u):=J(u)|_{\alpha =0}$ and $\mathcal{P}_{\rho ,0}:=\mathcal{P}%
_{\rho }|_{\alpha =0}$.

\begin{lemma}
\label{L4.9} (\cite[Lemma 2.1]{TZ2022}) Let $s,\rho>0$. We have

\begin{itemize}
\item[$(i)$] $v\in \mathcal{P}_{\rho ,0}$ if and only if $u(\cdot )=s^{-%
\frac{1}{p-4}}v\left( s^{-\frac{p-2}{2(p-4)}}\cdot \right) \in \mathcal{P}%
_{s\rho ,0}$ and $\mathcal{P}_{\rho ,0}\neq \emptyset $. Moreover, it holds
that $m_{s\rho }=s^{-\frac{2}{p-4}}m_{\rho }.$

\item[$(ii)$] If $p>4$, then $m_{\rho }>0$ is attained.
\end{itemize}
\end{lemma}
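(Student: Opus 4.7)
The plan is to deduce part (i) from a direct scaling computation and to obtain part (ii) via Schwarz symmetrization together with the compact radial embedding, using the scaling identity from (i) as the key device that prevents mass loss.

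For part (i), I would write the scaling as $u(x) = s^{a} v(s^{b} x)$ with $a = -1/(p-4)$ and $b = -(p-2)/(2(p-4))$. A change of variables yields
\[
\|u\|_2^2 = s^{2a-2b}\|v\|_2^2, \qquad \|\nabla u\|_2^2 = s^{2a}\|\nabla v\|_2^2, \qquad \|u\|_p^p = s^{pa - 2b}\|v\|_p^p,
\]
and the exponents satisfy $2a-2b = 1$ and $2a = pa-2b = -2/(p-4)$. Consequently $P(u) = s^{-2/(p-4)} P(v)$ and $J_0(u) = s^{-2/(p-4)} J_0(v)$, which proves both $v\in\mathcal{P}_{\rho,0} \Leftrightarrow u\in\mathcal{P}_{s\rho,0}$ and $m_{s\rho} = s^{-2/(p-4)} m_\rho$ after taking infima. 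For $\mathcal{P}_{\rho,0}\neq\emptyset$, I fix any $v\in\mathcal{S}_\rho$ and use the $L^2$-preserving fiber $v_t(x)=tv(tx)$; since $P(v_t)=t^2[\|\nabla v\|_2^2 - \frac{p-2}{p}t^{p-4}\|v\|_p^p]$ has for $p>4$ a unique positive root $t_0$, one has $v_{t_0}\in\mathcal{P}_{\rho,0}$.

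For (ii), I would first establish $m_\rho>0$. Combining $P(u)=0$ with the Gagliardo--Nirenberg inequality (\ref{E6}) gives $\|\nabla u\|_2^2 = \frac{p-2}{p}\|u\|_p^p \leq \frac{(p-2)\mathcal{C}_p\rho}{p}\|\nabla u\|_2^{p-2}$ on $\mathcal{P}_{\rho,0}$, whence $\|\nabla u\|_2^{p-4}\geq \frac{p}{(p-2)\mathcal{C}_p\rho}$; since $P(u)=0$ also yields the Nehari-type identity $J_0(u) = \frac{p-4}{2(p-2)}\|\nabla u\|_2^2$, a positive lower bound for $m_\rho$ follows. For attainment, I take a minimizing sequence $\{u_n\}\subset \mathcal{P}_{\rho,0}$, replace each $u_n$ by its Schwarz symmetrization $u_n^{*}$, then reproject via the fiber map to find $t_n\in(0,1]$ with $(u_n^{*})_{t_n}\in\mathcal{P}_{\rho,0}$. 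Rearrangement preserves $L^2$ and $L^p$ norms while not increasing $\|\nabla\cdot\|_2$, and since $J_0$ is maximized along the fiber at the Pohozaev point (Lemma~\ref{L3.1} applied with $\alpha=0$), this projection does not raise $J_0$, so the reprojected sequence is still minimizing, radial and radially decreasing. By the positivity step it is bounded in $H^1_{\rm rad}(\mathbb{R}^2)$, and by Strauss' compact radial embedding into $L^p(\mathbb{R}^2)$ a subsequence satisfies $u_n\rightharpoonup u$ in $H^1(\mathbb{R}^2)$ and $u_n\to u$ in $L^p(\mathbb{R}^2)$, with $u\not\equiv 0$ by the uniform lower bound on $\|u_n\|_p$.

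It remains to rule out mass loss. Setting $\rho':=\|u\|_2^2\in(0,\rho]$, passing to the liminf in $P(u_n)=0$ gives $P(u)\leq 0$, so the fiber map provides $\tau\in(0,1]$ with $u_\tau\in\mathcal{P}_{\rho',0}$ and
\[
m_{\rho'} \leq J_0(u_\tau) = \tfrac{p-4}{2(p-2)}\tau^2\|\nabla u\|_2^2 \leq \tfrac{p-4}{2(p-2)}\liminf\|\nabla u_n\|_2^2 = \liminf J_0(u_n) = m_\rho.
\]
The scaling identity from (i) gives $m_{\rho'}=(\rho'/\rho)^{-2/(p-4)}m_\rho$, which exceeds $m_\rho$ whenever $\rho'<\rho$ since $p>4$; this forces $\rho'=\rho$, so $u\in\mathcal{S}_\rho$, the weak $H^1$ and strong $L^p$ convergence upgrade to strong $L^2$ convergence, and $u$ attains $m_\rho$. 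The main obstacle is precisely the lack of compactness in $L^2(\mathbb{R}^2)$ (mass can a priori escape to infinity even after radial symmetrization); the closing device is the strict monotonicity of $\rho\mapsto m_\rho$ delivered by the scaling identity of (i), which excludes such leakage.
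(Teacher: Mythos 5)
Your proposal is correct, but it cannot be "the same approach as the paper" for a simple reason: the paper offers no proof of this lemma at all --- it is imported verbatim as \cite[Lemma 2.1]{TZ2022}, and nothing in Section 4 re-derives it. So what you have written is a genuine self-contained replacement for an outsourced result. Your route is the standard mass-supercritical one and all the key steps check out: the exponents in (i) do satisfy $2a-2b=1$ and $2a=pa-2b=-\tfrac{2}{p-4}$, so $P$ and $J_0$ are both homogeneous of degree $-\tfrac{2}{p-4}$ under the scaling and the identity $m_{s\rho}=s^{-2/(p-4)}m_\rho$ follows from the induced bijection $\mathcal{P}_{\rho,0}\to\mathcal{P}_{s\rho,0}$; the lower bound in (ii) via Gagliardo--Nirenberg and $J_0=\tfrac{p-4}{2(p-2)}\Vert\nabla\cdot\Vert_2^2$ on $\mathcal{P}_{\rho,0}$ is exactly the computation the paper itself performs later in Lemma \ref{L4.10}; and your compactness scheme (Schwarz symmetrization, reprojection with $t_n\le 1$ so the level does not increase, Strauss's compact embedding $H^1_{\mathrm{rad}}(\mathbb{R}^2)\hookrightarrow L^p(\mathbb{R}^2)$, then exclusion of mass leakage by the strict monotonicity $m_{\rho'}=(\rho'/\rho)^{-2/(p-4)}m_\rho>m_\rho$ for $\rho'<\rho$, which also forces $\tau=1$ and hence $P(u)=0$) closes the argument correctly. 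Two cosmetic points: Schwarz rearrangement should be applied to $|u_n|$ rather than $u_n$ (harmless, since $\Vert\nabla|u|\Vert_2\le\Vert\nabla u\Vert_2$ and the $L^2$, $L^p$ norms are unchanged), and Lemma \ref{L3.1}, which you invoke for the fiber map, is stated in the paper for $\alpha<0$, so you should note that the $\alpha=0$ computation is identical (indeed simpler, as the logarithmic term drops out). It is also worth observing that, within this paper's own toolkit, attainment in (ii) could alternatively be extracted from the explicit description of the normalized solution in Lemma \ref{L4.11} (quoting \cite{ZZ2022}), i.e.\ the rescaled unique positive radial ground state $W$; your argument has the advantage of requiring neither uniqueness of $W$ nor sharp Gagliardo--Nirenberg constants.
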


\begin{lemma}
\label{L4.10} Let $(\overline{\lambda }_{\rho },\overline{u}_{\rho })$ be
the solution of Problem (\ref{E32}). Then we have $\Vert \nabla \overline{u}%
_{\rho }\Vert _{2}\geq x_{\ast}:=\left[ \frac{p}{\rho \mathcal{C}_{p}(p-2)}%
\right] ^{\frac{2}{p-4}}$ and $m_{\rho }\geq \frac{p-4}{2(p-2)}\left( \frac{p%
}{(p-2)\rho \mathcal{C}_{p}}\right) ^{\frac{2}{p-4}}$.
\end{lemma}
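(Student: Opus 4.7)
The plan is to reduce both bounds to the Pohozaev identity for (\ref{E32}) together with the Gagliardo--Nirenberg inequality (\ref{E6}), exactly as in the standard treatment of the mass-supercritical NLS on $\mathbb{R}^{2}$.

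First, I would derive the Pohozaev identity for Problem (\ref{E32}). Testing the equation $-\Delta u+\lambda u=|u|^{p-2}u$ against $u$ gives the Nehari identity $\|\nabla u\|_{2}^{2}+\lambda\rho=\|u\|_{p}^{p}$, while testing against $x\cdot\nabla u$ and integrating by parts gives the classical Pohozaev identity in $\mathbb{R}^{2}$, namely $\lambda\rho=\tfrac{2}{p}\|u\|_{p}^{p}$. Subtracting these two relations yields
\begin{equation*}
\|\nabla\overline{u}_{\rho}\|_{2}^{2}=\tfrac{p-2}{p}\|\overline{u}_{\rho}\|_{p}^{p},
\end{equation*}
so that $\overline{u}_{\rho}\in\mathcal{P}_{\rho,0}$ (this is exactly the $\alpha=0$ specialization of $P(u)=0$ from Lemma \ref{L3.1}).

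Next, I would plug the Gagliardo--Nirenberg inequality $\|\overline{u}_{\rho}\|_{p}^{p}\le\mathcal{C}_{p}\rho\,\|\nabla\overline{u}_{\rho}\|_{2}^{p-2}$ into the above Pohozaev identity to obtain
\begin{equation*}
\|\nabla\overline{u}_{\rho}\|_{2}^{2}\le\tfrac{p-2}{p}\,\mathcal{C}_{p}\rho\,\|\nabla\overline{u}_{\rho}\|_{2}^{p-2}.
\end{equation*}
Since $p>4$ and $\|\nabla\overline{u}_{\rho}\|_{2}>0$, dividing and rearranging yields the claimed quantitative lower bound $\|\nabla\overline{u}_{\rho}\|_{2}\ge\bigl[\tfrac{p}{(p-2)\mathcal{C}_{p}\rho}\bigr]^{\frac{1}{p-4}}$, i.e., $\|\nabla\overline{u}_{\rho}\|_{2}^{2}\ge x_{\ast}$ in the notation of the statement.

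Finally, to bound $m_{\rho}$ from below, I would use the Pohozaev identity to eliminate the $L^{p}$-norm in $J_{0}$: for every $u\in\mathcal{P}_{\rho,0}$,
\begin{equation*}
J_{0}(u)=\tfrac{1}{2}\|\nabla u\|_{2}^{2}-\tfrac{1}{p}\|u\|_{p}^{p}=\tfrac{1}{2}\|\nabla u\|_{2}^{2}-\tfrac{1}{p-2}\|\nabla u\|_{2}^{2}=\tfrac{p-4}{2(p-2)}\|\nabla u\|_{2}^{2}.
\end{equation*}
By Lemma \ref{L4.9}(ii), $m_{\rho}$ is attained at some $\overline{u}_{\rho}\in\mathcal{P}_{\rho,0}$, and combining this identity with the lower bound on $\|\nabla\overline{u}_{\rho}\|_{2}^{2}$ proved in the previous step immediately gives $m_{\rho}\ge\tfrac{p-4}{2(p-2)}\bigl[\tfrac{p}{(p-2)\mathcal{C}_{p}\rho}\bigr]^{\frac{2}{p-4}}$. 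There is no real obstacle here; the only point that needs care is the verification of the Pohozaev identity in the two-dimensional setting (where the gradient term drops out), but this is standard and recorded for instance in \cite{CJ2019}.
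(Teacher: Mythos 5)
Your proposal is correct and takes essentially the same approach as the paper: the Pohozaev constraint $\Vert \nabla \overline{u}_{\rho }\Vert _{2}^{2}=\frac{p-2}{p}\Vert \overline{u}_{\rho }\Vert _{p}^{p}$ combined with the Gagliardo--Nirenberg inequality (\ref{E6}) yields the gradient lower bound, and the resulting identity $J_{0}(u)=\frac{p-4}{2(p-2)}\Vert \nabla u\Vert _{2}^{2}$ on $\mathcal{P}_{\rho ,0}$ gives the bound on $m_{\rho }$ (the paper simply quotes these identities rather than deriving them from the Nehari and classical Pohozaev relations as you do). You also correctly interpret the statement's $\Vert \nabla \overline{u}_{\rho }\Vert _{2}\geq x_{\ast }$ as a bound on the squared norm, consistent with the exponent $\frac{2}{p-4}$ and with what the paper's own proof actually establishes.
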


\begin{proof}
Since $(\overline{\lambda }_{\rho },\overline{u}_{\rho })$ is the solution
of Problem (\ref{E32}), it is easy to obtain that
\begin{equation*}
m_{\rho }=\frac{1}{2}\int_{\mathbb{R}^{2}}|\nabla \overline{u}_{\rho
}|^{2}dx-\frac{1}{p}\int_{\mathbb{R}^{2}}|\overline{u}_{\rho }|^{p}dx
\end{equation*}%
and%
\begin{equation*}
P(u)|_{\alpha =0}=\int_{\mathbb{R}^{2}}|\nabla \overline{u}_{\rho }|^{2}dx-%
\frac{p-2}{p}\int_{\mathbb{R}^{2}}|\overline{u}_{\rho }|^{p}dx=0.
\end{equation*}%
These imply that $\int_{\mathbb{R}^{2}}|\nabla \overline{u}_{\rho
}|^{2}dx\geq \left[ \frac{p}{\rho \mathcal{C}_{p}(p-2)}\right] ^{\frac{2}{p-4%
}}$ and
\begin{equation*}
m_{\rho }=\frac{p-4}{2(p-2)}\int_{\mathbb{R}^{2}}|\nabla \overline{u}_{\rho
}|^{2}dx\geq \frac{p-4}{2(p-2)}\left( \frac{p}{(p-2)\rho \mathcal{C}_{p}}%
\right) ^{\frac{2}{p-4}}.
\end{equation*}%
The proof is complete.
\end{proof}

\begin{lemma}
\label{L4.11} Let $(\overline{\lambda }_{\rho },\overline{u}_{\rho })$ be
the solution of problem (\ref{E32}). Then there exists $R_{0}>0$ such that
\begin{equation*}
\overline{u}_{\rho }\leq C_{1}\exp ^{-C_{2}|x|}\ \text{for any}\ |x|\geq
R_{0},
\end{equation*}%
where $C_{1},C_{2}>0$.
\end{lemma}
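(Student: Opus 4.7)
The plan is to prove exponential decay via a standard barrier comparison argument on an exterior domain, after first securing positivity of the Lagrange multiplier and decay of $\overline{u}_\rho$ at infinity. I first show $\overline{\lambda}_\rho > 0$. Testing the equation in (\ref{E32}) against $\overline{u}_\rho$ gives
\begin{equation*}
\|\nabla \overline{u}_\rho\|_2^2 + \overline{\lambda}_\rho \|\overline{u}_\rho\|_2^2 = \|\overline{u}_\rho\|_p^p,
\end{equation*}
and combining this with the Pohozaev identity $\|\nabla \overline{u}_\rho\|_2^2 = \frac{p-2}{p}\|\overline{u}_\rho\|_p^p$ (cf.\ the proof of Lemma~\ref{L4.10}) yields $\overline{\lambda}_\rho\rho = \frac{2}{p}\|\overline{u}_\rho\|_p^p > 0$. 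Next, since $\overline{u}_\rho \in H^1(\mathbb{R}^2)$, planar Sobolev embedding gives $\overline{u}_\rho \in L^q(\mathbb{R}^2)$ for every $q \in [2,\infty)$, so $|\overline{u}_\rho|^{p-2}\overline{u}_\rho \in L^q(\mathbb{R}^2)$ for all such $q$. A standard elliptic $W^{2,q}$-bootstrap and Morrey embedding then place $\overline{u}_\rho$ in $C^1(\mathbb{R}^2) \cap L^\infty(\mathbb{R}^2)$, and uniform continuity together with $\overline{u}_\rho \in H^1(\mathbb{R}^2)$ forces $\overline{u}_\rho(x) \to 0$ as $|x|\to\infty$.

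To close the argument, fix any $\mu \in (0, \overline{\lambda}_\rho)$ and choose $R_0 > 0$ so large that $|\overline{u}_\rho(x)|^{p-2} \le \overline{\lambda}_\rho - \mu$ for all $|x| \geq R_0$. Then in the exterior region $\{|x| \geq R_0\}$ the equation yields $-\Delta \overline{u}_\rho + \mu \overline{u}_\rho \leq 0$ in the weak sense. Pick $C_2 \in (0, \sqrt{\mu})$ and set $w(x) := C_1 e^{-C_2 |x|}$. A direct computation in $\mathbb{R}^2\setminus\{0\}$ (using $\Delta |x| = 1/|x|$ in dimension two) gives
\begin{equation*}
-\Delta w + \mu w = \left(\mu - C_2^2 + \tfrac{C_2}{|x|}\right) w > 0 \quad \text{for } |x| \geq R_0.
\end{equation*}
Choosing $C_1$ large enough that $w \geq \overline{u}_\rho$ on $\{|x| = R_0\}$, and noting that both $w$ and $\overline{u}_\rho$ vanish at infinity, the weak maximum principle applied to $w - \overline{u}_\rho$ on the exterior domain $\{|x| > R_0\}$ yields $\overline{u}_\rho(x) \leq C_1 e^{-C_2|x|}$ for all $|x| \geq R_0$, as desired.

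The only delicate point is the choice of constants: one must pick $C_2 < \sqrt{\mu}$ so that the principal part $\mu - C_2^2$ strictly dominates in the barrier inequality, while the two-dimensional correction $C_2/|x|$ arising from the radial Laplacian happens to have the favorable sign and can simply be discarded. Everything else is textbook elliptic machinery, and the proof relies only on the equation, the Pohozaev identity already exploited in Lemma~\ref{L4.10}, and the membership $\overline{u}_\rho \in H^1(\mathbb{R}^2)$.
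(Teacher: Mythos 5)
Your overall strategy coincides with the paper's proof: secure $\overline{\lambda }_{\rho }>0$, obtain decay of $\overline{u}_{\rho }$ at infinity through elliptic estimates, and then compare $\overline{u}_{\rho }$ with an exponential supersolution of $-\Delta +\mu $ on the exterior domain via the maximum principle. The paper takes $\mu =\overline{\lambda }_{\rho }/2$, uses the barrier $\eta (x)=\ell ^{2}e^{\ell R_{0}}e^{-\ell |x|}$ with $\ell =\sqrt{\overline{\lambda }_{\rho }/2}$, and invokes Theorems 8.17 and 8.1 of Gilbarg--Trudinger, while you take $C_{2}<\sqrt{\mu }$; these are cosmetic differences. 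Your derivation of $\overline{\lambda }_{\rho }>0$ from the Nehari identity combined with the Pohozaev identity is a legitimate, more self-contained alternative to the paper's route, which instead quotes the explicit formulas $\overline{\lambda }_{\rho }=\rho ^{-\frac{p-2}{p-4}}\Vert W\Vert _{2}^{-\frac{2(p-2)}{p-4}}$ and $\overline{u}_{\rho }=\overline{\lambda }_{\rho }^{\frac{1}{p-2}}W(\overline{\lambda }_{\rho }^{1/2}x)$ from \cite{ZZ2022}.

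There is, however, one genuine gap: you never establish (nor even state) that $\overline{u}_{\rho }\geq 0$, and your key differential inequality requires it. From the equation,
\begin{equation*}
-\Delta \overline{u}_{\rho }+\mu \overline{u}_{\rho }=\left( \mu -\overline{\lambda }_{\rho }+|\overline{u}_{\rho }|^{p-2}\right) \overline{u}_{\rho },
\end{equation*}
and on $\{|x|\geq R_{0}\}$ the factor in parentheses is $\leq 0$ by your choice of $R_{0}$, so the right-hand side is $\leq 0$ only where $\overline{u}_{\rho }\geq 0$; for a sign-changing solution the asserted inequality $-\Delta \overline{u}_{\rho }+\mu \overline{u}_{\rho }\leq 0$ fails on the set where $\overline{u}_{\rho }<0$, and the comparison argument collapses. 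This is not a vacuous worry, because Problem (\ref{E32}) by itself does not single out a positive solution, and ``the solution'' is only well defined (up to translation) within the class of positive solutions. The paper settles this point first: via \cite[Lemma 2.1]{ZZ2022} it identifies $\overline{u}_{\rho }$ as a rescaling of the unique positive solution $W$ of $-\Delta W+W=|W|^{p-2}W$, which yields both $\overline{\lambda }_{\rho }>0$ and $\overline{u}_{\rho }>0$ (display (\ref{E33})) before running the barrier argument. Your proof is repaired either by quoting the same characterization, or by applying your comparison to $\overline{u}_{\rho }^{+}$ (Kato's inequality gives $-\Delta \overline{u}_{\rho }^{+}+\mu \overline{u}_{\rho }^{+}\leq 0$ on the exterior region); as written, though, the step is unjustified.
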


\begin{proof}
Since $(\overline{\lambda }_{\rho },\overline{u}_{\rho })$ is the solution
of Problem (\ref{E32}), it follows from \cite[Lemma 2.1]{ZZ2022} that
\begin{equation*}
\overline{\lambda }_{\rho }=\rho ^{-\frac{p-2}{p-4}}\Vert W\Vert _{2}^{-%
\frac{2(p-2)}{p-4}}\ \text{and }\overline{u}_{\rho }=\overline{\lambda }%
_{\rho }^{\frac{1}{p-2}}W(\overline{\lambda }_{\rho }^{1/2}x),
\end{equation*}%
where $W$ is a positive solution to the equation%
\begin{equation*}
\left\{
\begin{array}{ll}
-\Delta W+W=|W|^{p-2}W, & \text{in }\mathbb{R}^{2}, \\
\lim_{x\rightarrow \infty }W(x)=0, &
\end{array}%
\right.
\end{equation*}%
which is unique up to translations in $\mathbb{R}^{2}$. Then we have
\begin{equation}
\overline{\lambda }_{\rho }>0\ \text{and}\ \overline{u}_{\rho }>0.
\label{E33}
\end{equation}%
Moreover,
\begin{equation}
\begin{array}{ll}
-\Delta \overline{u}_{\rho }=-\overline{\lambda }_{\rho }\overline{u}_{\rho
}+|\overline{u}_{\rho }|^{p-2}\overline{u}_{\rho }, & \ \text{in}\ \mathbb{R}%
^{2}.%
\end{array}
\label{E34}
\end{equation}%
Then by (\ref{E33})--(\ref{E34}), we obtain that $-\Delta \overline{u}_{\rho
}\in L_{loc}^{q}(\mathbb{R}^{2})$ for any $q>1$. Using \cite[Theorem 9.11]%
{GT1983}, we conclude that $\overline{u}_{\rho }\in W_{loc}^{2,q}(\mathbb{R}%
^{2})$ for all $q>1$, whence $\overline{u}_{\rho }\in C_{loc}^{1,r}(\mathbb{R%
}^{2})$ by the Sobolev embedding theorem for any $0<r<1$. So $\overline{u}%
_{\rho }\in L^{\infty }(\mathbb{R}^{2})\cup C_{loc}^{1,r}(\mathbb{R}^{2})$ .
Following (\ref{E34}) and \cite[Theorem 8.17]{GT1983}, there exists $C_{1}>0$
independent of $R$ such that
\begin{equation*}
\sup_{x\in B_{1/2}(y)}\overline{u}_{\rho }\leq C_{1}\Vert \overline{u}_{\rho
}\Vert _{2,B_{1}(y)},\ \text{for any}\ y\in \mathbb{R}^{2}.
\end{equation*}%
So there exists $R_{0}>0$ such that $\overline{u}_{\rho }^{p-2}\leq \frac{%
\overline{\lambda }_{\rho }}{2}$ for any $|x|\geq R_{0}$. Combining with (%
\ref{E34}), we have
\begin{equation}
-\Delta \overline{u}_{\rho }\leq -\frac{\overline{\lambda }_{\rho }}{2}%
\overline{u}_{\rho },\ \text{for any}\ |x|\geq R_{0}.  \label{E35}
\end{equation}%
Let $\ell =\sqrt{\frac{\overline{\lambda }_{\rho }}{2}}$ and $\eta (x)=\ell
^{2}\exp ^{\ell R_{0}}\exp ^{-\ell |x|}$ for any $|x|\geq R_{0}$. Then
\begin{equation}
\eta (x)=\overline{u}_{\rho }\ \text{for}\ \text{any }|x|=R_{0}\ \text{and}\
\Delta \eta (x)\leq \ell ^{2}\eta (x)\ \text{for any}\ |x|\geq R_{0}.
\label{E36}
\end{equation}%
Define $\widetilde{\eta }(x)=\eta (x)-\overline{u}_{\rho }$. By (\ref{E35}%
)--(\ref{E36}), one has
\begin{equation*}
\left\{
\begin{array}{l}
-\Delta \widetilde{\eta }(x)+\ell ^{2}\widetilde{\eta }(x)\geq 0,\ \text{for
all}\ |x|>R_{0} \\
\eta (x)=\overline{u}_{\rho },\ \text{for}\ |x|=R_{0}.%
\end{array}%
\right.
\end{equation*}%
The maximum principle (see \cite[Theorem 8.1]{GT1983}) implies that $%
\widetilde{\eta }(x)\geq 0$ for all $|x|>R_{0}$, and hence the conclusion
follows.
\end{proof}

By (\ref{E15}), (\ref{E26}) and Lemma \ref{L4.4}, we have
\begin{eqnarray}
\overline{C}_{R,\rho }:=\overline{C}_{R,s,\rho }|_{s=1} &\geq &f\left(
x_{\ast }\right)  \notag \\
&=&\frac{p-4}{2(p-2)}\left( \frac{p}{(p-2)\rho \mathcal{C}_{p}}\right) ^{%
\frac{2}{p-4}}+\frac{\alpha }{4}\rho ^{2}\log (1+R)  \notag \\
&\rightarrow &\frac{p-4}{2(p-2)}\left( \frac{p}{(p-2)\rho \mathcal{C}_{p}}%
\right) ^{\frac{2}{p-4}}\text{ as }R\rightarrow \infty .  \label{E37}
\end{eqnarray}

\begin{lemma}
\label{L4.12} Let $(\overline{\lambda }_{\rho },\overline{u}_{\rho })$ be
the solution of Problem (\ref{E32}). For $R>R_{\rho }$ large enough and $%
0<|\alpha |<\alpha _{R,\rho }^{\ast }$, there exists $\rho _{R}<\rho $
satisfying $\rho _{R}=\rho +o_{R}(1)$, such that
\begin{equation*}
\overline{C}_{R,\rho _{R}}\leq m_{\rho }+o_{R}(1).
\end{equation*}
\end{lemma}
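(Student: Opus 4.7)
The approach is to construct an admissible path realizing the minimax level $\overline{C}_{R,\rho_R}$ near $m_\rho$ by using a cut-off of the entire-space ground state $\overline{u}_\rho$ as the backbone. Fix a smooth radial cut-off $\chi_R$ with $\chi_R\equiv 1$ on $B_{R/2}$, $\chi_R\equiv 0$ on $\mathbb{R}^2\setminus B_R$, and $|\nabla\chi_R|\le 4/R$, and set $u_R:=\chi_R\overline{u}_\rho\in H_0^1(\Omega_R)$. The exponential decay bound of Lemma \ref{L4.11} yields
\begin{equation*}
\rho-\|u_R\|_{2,\Omega_R}^2,\quad \|\nabla\overline{u}_\rho\|_2^2-\|\nabla u_R\|_{2,\Omega_R}^2,\quad \|\overline{u}_\rho\|_p^p-\|u_R\|_{p,\Omega_R}^p \;=\; O(e^{-cR}).
\end{equation*}
Setting $\rho_R:=\|u_R\|_{2,\Omega_R}^2$ then gives $\rho_R<\rho$ and $\rho_R=\rho+o_R(1)$.

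For $t\ge 1$ the rescaling $(u_R)_t:=tu_R(t\cdot)$ is supported in $B_{R/t}\subset\Omega_R$, hence $(u_R)_t\in\mathcal{S}_{\Omega_R,\rho_R}\cap H_0^1(\Omega_R)$, and
\begin{equation*}
\widetilde{J}_{R,1}((u_R)_t)=\tfrac{t^2}{2}\|\nabla u_R\|_2^2+\tfrac{\alpha}{4}A(u_R)-\tfrac{\alpha\rho_R^2}{4}\log t-\tfrac{t^{p-2}}{p}\|u_R\|_p^p,
\end{equation*}
where $A(u_R):=\int_{\Omega_R}\int_{\Omega_R}\log(|x-y|)|u_R(x)|^2|u_R(y)|^2\,dxdy$. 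Splitting $A=\chi_1-\chi_2$, using the HLS bound (\ref{E7}) and $\mathrm{diam}(\Omega_R)\le 2R$, one gets $|A(u_R)|\le \log(1+2R)\rho_R^2+\mathcal{C}\|u_R\|_{8/3}^4$. Combined with (\ref{E26}), this forces $\tfrac{\alpha}{4}A(u_R)=o_R(1)$, and the same smallness of $|\alpha|$ handles $\tfrac{\alpha\rho_R^2}{4}\log t$ on bounded $t$-intervals. A fibering analysis as in Lemma \ref{L3.1} shows $t\mapsto\widetilde{J}_{R,1}((u_R)_t)$ has a unique maximizer $t_R$ which converges to $1$, and by Step~1,
\begin{equation*}
\max_{t\ge 1}\widetilde{J}_{R,1}((u_R)_t)= J_0(\overline{u}_\rho)+o_R(1)=m_\rho+o_R(1).
\end{equation*}
Since $p>4$, the dilation drives $\widetilde{J}_{R,1}((u_R)_t)\to-\infty$ as $t\to\infty$, so at some $T_R$ we reach both $\|\nabla(u_R)_{T_R}\|_2>x_\ast$ and $\widetilde{J}_{R,1}((u_R)_{T_R})<0$.

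To obtain an admissible path $\gamma\in\Gamma_{R,\rho_R}$, I prepend a short homotopy on $\mathcal{S}_{\Omega_R,\rho_R}$ joining $v_{R,1,\rho_R}$ to $u_R$, for example the normalized convex combination $\gamma(s)=\sqrt{\rho_R}\,\bigl((1-s)v_{R,1,\rho_R}+su_R\bigr)/\|(1-s)v_{R,1,\rho_R}+su_R\|_2$, and then follow the dilation path above. Since $\widetilde{J}_{R,1}(v_{R,1,\rho_R})=C_{R,\rho_R}\to 0^+$ by the analogue of (\ref{E27}), $\widetilde{J}_{R,1}(u_R)=m_\rho+o_R(1)$, and the gradient along the interpolation is controlled by the endpoints $\|\nabla v_{R,1,\rho_R}\|_2\le x_\ast$ and $\|\nabla u_R\|_2=\|\nabla\overline{u}_\rho\|_2+o_R(1)$, this prepended piece contributes at most $m_\rho+o_R(1)$ to the supremum. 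Hence
\begin{equation*}
\overline{C}_{R,\rho_R}\le\sup_{t\ge 0}\widetilde{J}_{R,1}(\gamma(t))\le m_\rho+o_R(1).
\end{equation*}
The main obstacle is the two potentially divergent logarithmic contributions $\tfrac{\alpha}{4}A(u_R)$ (logarithmic in $R$) and $-\tfrac{\alpha\rho_R^2}{4}\log t$ (logarithmic in $t$); both become negligible only by combining the smallness $|\alpha|\log(1+R)\to 0$ from (\ref{E26}) with the uniform bound $t_R\to 1$, so the careful bookkeeping of these two quantities along the entire path is the delicate part of the argument. A secondary point is to show the prepended interpolation does not overshoot $m_\rho$; here the vanishing of $C_{R,\rho_R}$ and the $H^1$-closeness of $u_R$ to $\overline{u}_\rho$ keep the energy below $m_\rho+o_R(1)$.
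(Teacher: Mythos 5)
Your construction follows the paper's skeleton (cut off the ground state $\overline{u}_\rho$ using the exponential decay of Lemma \ref{L4.11}, set $\rho_R=\|u_R\|_{2}^2$, run the dilation path, and kill the logarithmic terms with (\ref{E26})), but it breaks down at the one genuinely delicate step: how to join the local minimizer to the dilation path. Your prepended piece is a normalized convex combination of $v_{R,1,\rho_R}$ and $u_R$, and you claim it "contributes at most $m_\rho+o_R(1)$" because the endpoint energies and endpoint gradient norms are controlled. This does not follow. Endpoint gradient control only gives $\|\nabla\gamma(s)\|_2\lesssim \max\{x_\ast,\|\nabla u_R\|_2\}=\|\nabla\overline{u}_\rho\|_2+o_R(1)$ along the segment, so the kinetic term alone can be as large as roughly $\tfrac12\|\nabla\overline{u}_\rho\|_2^2=\tfrac{p-2}{p-4}\,m_\rho>m_\rho$ (by Lemma \ref{L4.10} and $m_\rho=\tfrac{p-4}{2(p-2)}\|\nabla\overline{u}_\rho\|_2^2$), and nothing in your argument shows the negative $L^p$ term compensates at intermediate $s$; the crude lower bound $\|\gamma(s)\|_p^p\geq s^p\|u_R\|_p^p$ is far too weak, especially as $p\downarrow 4$. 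Bounding the maximum of a path between a local minimizer and a high-kinetic-energy state by its endpoint values is exactly the kind of claim mountain-pass geometry forbids in general, so this is a genuine gap, not a bookkeeping issue.

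The paper's proof shows how to do the junction correctly, and the key point is that it must be made \emph{at low kinetic energy}. Using Lemma \ref{L4.10}, one picks $s_0\in(0,1)$ with $\|\nabla(\overline{u}_\rho)_{s_0}\|_2<\bigl(\tfrac{p-4}{2(p-2)}\bigr)^{1/2}x_\ast$ (see (\ref{E38})), and — crucially — cuts off $\overline{u}_\rho$ at radius $Rs_0$ rather than $R$, so that the dilations $(v_R)_t$ belong to $H_0^1(B_R(0))$ for \emph{all} $t\geq s_0$, not just $t\geq 1$ (for $t<1$ dilation expands the support, which is precisely why your cutoff at scale $R$ forced you into the interpolation). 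The connection from the local minimizer $w_{R,\rho_R}$ to the low-kinetic end of the dilation path is then made through dilations $(w_{R,\rho_R})_{\widetilde{C}_R t+1}$ of the local minimizer itself, with $\widetilde{C}_R$ chosen to match gradient norms at $t=s_0$; estimates (\ref{E46})--(\ref{E47}) show this piece stays below $\tfrac{p-4}{4(p-2)}\bigl(\tfrac{p}{(p-2)\rho\mathcal{C}_p}\bigr)^{\frac{2}{p-4}}+o_R(1)$, which by (\ref{E48}) is strictly below the maximum $m_\rho+o_R(1)$ attained on the $(v_R)_t$, $t\geq s_0$, piece. Hence the supremum over the whole admissible path in $\Gamma_{R,\rho_R}$ of Lemma \ref{L4.4} equals $m_\rho+o_R(1)$. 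To repair your proof you would have to replace the linear interpolation by such a low-energy connection (or prove an energy bound along your segment, which is not available).
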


\begin{proof}
Let $(\overline{\lambda }_{\rho },\overline{u}_{\rho })$ be the solution of
Problem (\ref{E32}). Set%
\begin{equation*}
H(t)=J_{0}\left( \left( \overline{u}_{\rho }\right) _{t}\right) =\frac{t^{2}%
}{2}\int_{\mathbb{R}^{2}}|\nabla \overline{u}_{\rho }|^{2}dx-\frac{t^{p-2}}{p%
}\int_{\mathbb{R}^{2}}|\overline{u}_{\rho }|^{p}dx\text{ for }t>0.
\end{equation*}%
Clearly, $H(t)$ is increasing on $0<t<1$ and is decreasing on $t>1,$ and $%
H(1)=\max_{t>0}J_{0}\left( \left( \overline{u}_{\rho }\right) _{t}\right) $.
Then by the fact of $\Vert \nabla \overline{u}_{\rho }\Vert _{2}\geq x_{\ast}
$ by Lemma \ref{L4.10}, there exists $0<s_{0}<1$ such that
\begin{equation}
\Vert \nabla \left( \overline{u}_{\rho }\right) _{s_{0}}\Vert _{2}<%
\left(\frac{p-4}{2(p-2)}\right)^{1/2} x_{\ast}\ \text{and}\
J_{0}\left( \left( \overline{u}_{\rho }\right) _{s_{0}}\right) <\frac{1}{2}%
m_{\rho }.  \label{E38}
\end{equation}

For any $R>\max \left\{ \frac{2R_{0}}{s_{0}},R_{\rho }\right\} $, we define $%
\xi _{R}\in C^{1}(\mathbb{R}^{2},[0,1])$ such that $|\nabla \xi _{R}|\leq 2$
and
\begin{equation*}
\xi _{R}(x)=\left\{
\begin{array}{l}
1,\ \text{if}\ x\in B_{Rs_{0}/2}(0), \\
0,\ \text{if}\ x\in \mathbb{R}^{2}\setminus B_{Rs_{0}}(0).%
\end{array}%
\right.
\end{equation*}%
Let $v_{R}=\xi _{R}\overline{u}_{\rho }$ and $\Vert v_{R}\Vert _{2}^{2}=\rho
_{R}$. Clearly, $\rho _{R}<\rho $. Since
\begin{equation*}
\int_{\mathbb{R}^{2}\setminus B_{Rs_{0}/2}(0)}|C_{1}e^{-C_{2}|x|}|^{2}dx\leq
C\int_{Rs_{0}/2}^{\infty }\exp ^{-2C_{2}r}rdr=o_{R}(1),
\end{equation*}%
It follows from Lemma \ref{L4.11} that
\begin{eqnarray}
\int_{\mathbb{R}^{2}}|v_{R}|^{2}dx &=&\int_{\mathbb{R}^{2}}|\overline{u}%
_{\rho }|^{2}dx+o_{R}(1),  \label{E39} \\
\int_{\mathbb{R}^{2}}|\nabla v_{R}|^{2}dx &=&\int_{\mathbb{R}^{2}}|\nabla
\overline{u}_{\rho }|^{2}dx+o_{R}(1),  \label{E40} \\
\int_{\mathbb{R}^{2}}|v_{R}|^{p}dx &=&\int_{\mathbb{R}^{2}}|\overline{u}%
_{\rho }|^{p}dx+o_{R}(1).  \label{E41}
\end{eqnarray}%
Note that $J_{0}\left( (v_{R})_{t}\right) $ has a unique maximum point $%
t_{R} $, i.e., $\max_{t>0}J_{0}\left( (v_{R})_{t}\right) =J_{0}\left(
(v_{R})_{t_{R}}\right) $. Then by (\ref{E39})--(\ref{E41}), we have
\begin{equation}
\max_{t>0}J_{0}\left( (v_{R})_{t}\right) =\max_{t>0}J_{0}\left( (\overline{u}%
_{\rho })_{t}\right) +o_{R}(1)=m_{\rho }+o_{R}(1),  \label{E42}
\end{equation}%
which shows that $t_{R}=1+o_{R}(1)$. Let $R$ large enough such that $%
t_{R}>s_{0}$. Then using (\ref{E42}), one has%
\begin{equation}
\max_{t\geq s_{0}}J_{0}\left( (v_{R})_{t}\right) =m_{\rho }+o_{R}(1).
\label{E43}
\end{equation}%
Note that $(v_{R})_{t}\in \mathcal{S}_{B_{R}(0),\rho_{R}}$ for any $t\geq
s_{0}$. Moreover, there exist $\widetilde{t}_{R}>\overline{t}_{R}>s_{0}$
such that
\begin{equation*}
\Vert \nabla \left( v_{R}\right) _{\widetilde{t}_{R}}\Vert _{2}>\Vert \nabla
\left( v_{R}\right) _{\overline{t}_{R}}\Vert _{2}=x_{\ast }\text{ and }%
\widetilde{J}_{R}\left( \left( v_{R}\right) _{\widetilde{t}_{R}}\right) <0.
\end{equation*}%
According to (\ref{E26}), (\ref{E39})--(\ref{E41}) and (\ref{E43}), we get
\begin{equation}
\max_{t\geq s_{0}}\widetilde{J}_{R}\left( (v_{R})_{t}\right) =\max_{t\geq
s_{0}}J_{0}\left( (v_{R})_{t}\right) +o_{R}(1)=m_{\rho }+o_{R}(1).
\label{E44}
\end{equation}%
Denote
\begin{equation*}
\chi(t)=\begin{cases}
0,\ t\geq s_{0},\\
1,\ 0\leq t<s_{0}.
\end{cases}
\end{equation*}
and
\begin{equation*}
\gamma (t)=
\chi(t)\left(w_{R,\rho _{R}}\right)_{\widetilde{C}_{R}t+1}+\left( 1-\chi(t)\right) \left(v_{R}\right)_{t},
\end{equation*}where $\widetilde{C}_{R}>0$ satisfies $\Vert\nabla \left(w_{R,\rho _{R}}\right)_{\widetilde{C}_{R}s_{0}+1}\Vert_{2,B_{R}(0)}=\Vert \nabla \left(v_{R}\right)_{s_{0}}\Vert_{2,B_{R}(0)}$, $w_{R,\rho _{R}}$ is the solution of Problem (\ref{E5}) with $\Omega
_{R}=B_{R}(0)$ satisfying $C_{R,\rho _{R}}=\widetilde{J}_{R}\left( w_{R,\rho
_{R}}\right) $. Then $\gamma (t)\in \Gamma _{R,\rho _{R}}$ by Lemma \ref%
{L4.4} and so
\begin{equation}
\overline{C}_{R,\rho _{R}}\leq \max_{t>0}\widetilde{J}_{R}\left( \gamma
(t)\right) .  \label{E45}
\end{equation}%
Note that $\Vert\nabla w_{R,\rho _{R}}\Vert_{2,B_{R}(0)}\geq R^{-1}\left[\lambda(B_{1}(0))\rho_{R}\right]^{1/2}$ by the Poincar\'{e} inequality. Following this, (\ref{E38})--(\ref{E40}), we deduce that
\begin{equation*}
\left(\frac{p-4}{2(p-2)}\right)^{1/2}
x_{\ast}+o_{R}(1)>\Vert\nabla \left(w_{R,\rho _{R}}\right)_{\widetilde{C}_{R}s_{0}+1}\Vert_{2,B_{R}(0)}
\geq \left(\widetilde{C}_{R}s_{0}+1\right)R^{-1}\left[\lambda(B_{1}(0))\rho\right]^{1/2}+o_{R}(1),
\end{equation*}
which implies that $\widetilde{C}_{R}<CRs_{0}^{-1}$, where $C=\left(\frac{p-4}{2(p-2)}\right)^{1/2}\left[\lambda(B_{1}(0))\rho\right]^{-1/2}x_{\ast}$.
So using (\ref{E26}), we have
\begin{equation}
\vert \alpha\vert \log\left(\widetilde{C}_{R}s_{0}+1\right)<\vert \alpha\vert \log\left(CR+1\right)<\vert \alpha\vert\left(\left|\log C\right|+\log(1+R)\right)=o_{R}(1).\label{E46}
\end{equation}
It follows from (\ref{E38}), (\ref{E40}) and (\ref{E46}) that
\begin{eqnarray}
&&\max_{0<t<s_{0}}\widetilde{J}_{R}\left(\left(w_{R,\rho _{R}}\right)_{\widetilde{C}_{R}t+1}\right) \notag\\
&=&\max_{0<t<s_{0}}\left[\frac{1}{2}\left(\widetilde{C}_{R}t+1\right)^{2}\int_{B_{R}(0)}|\nabla w_{R,\rho _{R}}|^{2}dx-\frac{\left(\widetilde{C}_{R}t+1\right)^{p-2}}{p}\int_{B_{R}(0)}|w_{R,\rho _{R}}|^{p}dx \right.\notag\\
&&\left.-\frac{\alpha}{4}\rho_{R}^{2}\log\left(\widetilde{C}_{R}t+1\right)+\frac{\alpha}{4}\int_{B_{R}(0)}\int_{B_{R}(0)}\log(\vert x-y\vert)|w_{R,\rho _{R}}(x)|^{2}|w_{R,\rho _{R}}(y)|^{2}dxdy\right] \notag\\
&=&\max_{0<t<s_{0}}\left[\frac{1}{2}\left(\widetilde{C}_{R}t+1\right)^{2}\int_{B_{R}(0)}|\nabla w_{R,\rho _{R}}|^{2}dx-\frac{\left(\widetilde{C}_{R}t+1\right)^{p-2}}{p}\int_{B_{R}(0)}|w_{R,\rho _{R}}|^{p}dx\right]+o_{R}(1) \notag\\
&\leq&\frac{1}{2}\Vert \nabla \left(v_{R}\right)_{s_{0}}\Vert_{2,B_{R}(0)}^{2}+o_{R}(1) \notag\\
&<&\frac{p-4}{4(p-2)}\left(\frac{p}{(p-2)\rho\mathcal{C}_{p}}\right)^{\frac{2}{p-4}}+o_{R}(1).\label{E47}
\end{eqnarray}
By (\ref{E44}) and Lemma \ref{L4.10}, we obtain
\begin{eqnarray}
\max_{t\geq s_{0}}\widetilde{J}_{R}\left( (v_{R})_{t}\right)\geq\frac{p-4}{2(p-2)}\left(\frac{p}{(p-2)\rho\mathcal{C}_{p}}\right)^{\frac{2}{p-4}}+o_{R}(1),\label{E48}
\end{eqnarray}
Combining with (\ref{E47}) and (\ref{E48}), we conclude that
$\max_{t>0}\widetilde{J}_{R}\left( \gamma
(t)\right) =\max_{t>s_{0}}\widetilde{J}_{R}\left( \left(v_{R}\right)_{t}\right)$.
Using this, together with (\ref{E44})--(\ref{E45}), for $R$ large enough,
we have
\begin{eqnarray*}
\overline{C}_{R,\rho _{R}} \leq \max_{t>0}\widetilde{J}_{R}\left( \gamma
(t)\right) =\max_{t>s_{0}}\widetilde{J}_{R}\left(
\left(v_{R}\right)_{t}\right) =m_{\rho }+o_{R}(1).
\end{eqnarray*}%
The proof is complete.
\end{proof}

\textbf{Proof of Theorem \ref{T1.5}:} Let $\left( \lambda
_{R}^{0},u_{R}^{0}\right) \in \mathbb{R}\times H_{0}^{1}(B_{R}(0))$ be the
local minimizer of Problem (\ref{E5}). Then
\begin{equation}
C_{R,\rho }=\frac{1}{2}\int_{B_{R}(0)}|\nabla u_{R}^{0}|^{2}dx+\frac{\alpha
}{4}\int_{B_{R}(0)}\int_{B_{R}(0)}\log
(|x-y|)|u_{R}^{0}(x)|^{2}|u_{R}^{0}(y)|^{2}dxdy-\frac{1}{p}%
\int_{B_{R}(0)}|u_{R}^{0}|^{p}dx  \label{E49}
\end{equation}%
and
\begin{equation}
\int_{B_{R}(0)}|\nabla u_{R}^{0}|^{2}dx-\frac{p-2}{p}%
\int_{B_{R}(0)}|u_{R}^{0}|^{p}dx-\frac{\alpha }{4}\rho ^{2}=\frac{R}{2}%
\int_{\partial B_{R}(0)}|\nabla u_{R}^{0}|^{2}dx\geq 0.  \label{E50}
\end{equation}%
For $0<|\alpha |<\alpha _{R,\rho }^{\ast },$ it follows from (\ref{E26})--(%
\ref{E27}) and (\ref{E49})--(\ref{E50}) that
\begin{eqnarray*}
\int_{B_{R}(0)}|\nabla u_{R}^{0}|^{2}dx &\leq &\frac{2(p-2)}{p-4}\left(
C_{R,\rho }-\frac{\alpha \rho ^{2}}{4(p-2)}-\frac{\alpha }{4}\log (1+R)\rho
^{2}\right) \\
&\leq &\frac{2(p-2)}{p-4}\left( C_{R,\rho }-\frac{\alpha \rho ^{2}}{4(p-2)}-%
\frac{\alpha }{4}\log (1+R)\rho ^{2}\right) \\
&\rightarrow &0\ \text{as}\ R\rightarrow \infty .
\end{eqnarray*}

Assume that the sequences $\{R_{n}\},\{\rho _{n}\}\subset
\mathbb{R}
^{+},\{\alpha _{n}\}\subset
\mathbb{R}
^{-}$ satisfy $\lim_{n\rightarrow \infty }R_{n}=\infty ,$ $0<\rho
_{n}<\lim_{n\rightarrow \infty }\rho _{n}=\rho $ given, and $0<|\alpha
_{n}|<\alpha _{R_{n},\rho _{n}}^{\ast },$ respectively. Let $(\lambda
_{R_{n}}^{1},u_{R_{n}}^{1})\in \mathbb{R}\times H_{0}^{1}(B_{R_{n}}(0))$ be
the solutions of the following problem%
\begin{equation*}
\left\{
\begin{array}{ll}
-\Delta u+\lambda u+\alpha _{n}\left( \log |\cdot |\ast |u|^{2}\right)
u=|u|^{p-2}u, & \ \text{in}\ B_{R_{n}}(0), \\
\int_{B_{R_{n}}(0)}|u|^{2}dx=\rho _{n}>0. &
\end{array}%
\right.
\end{equation*}%
Moreover, $\widetilde{J}_{R_{n}}(u_{R_{n}}^{1})=\overline{C}_{R_{n},\rho
_{n}}$. By Lemma \ref{L4.7}, the sequences $\{\lambda
_{R_{n}}^{1}\},\{u_{R_{n}}^{1}\}$ are bounded. Then there exist $\lambda
_{0}\in \mathbb{R}$ and $v_{0}\in H^{1}(\mathbb{R}^{2})\setminus \{0\}$ such
that up to a subsequence,
\begin{equation*}
\lambda _{R_{n}}^{1}\rightarrow \lambda _{0}\ \text{in}\ \mathbb{R}\ \text{%
and}\ u_{R_{n}}^{1}\rightharpoonup v_{0}\ \text{in}\ H^{1}(\mathbb{R}^{2}),
\end{equation*}%
and $(\lambda _{0},v_{0})\in \mathbb{R}\times H^{1}(\mathbb{R}^{2})$ is the
solution to $-\Delta u+\lambda u=|u|^{p-2}u$ in $\mathbb{R}^{2}$. Similar to
(\ref{E26}),
\begin{equation}
\alpha _{n}\log (1+R_{n})=o_{n}(1).  \label{E51}
\end{equation}%
By (\ref{E24}) and (\ref{E51}), we have
\begin{eqnarray}
\lim_{n\rightarrow \infty }\lambda _{R_{n}}^{1}\rho _{n}
&=&\lim_{n\rightarrow \infty }\left( -\frac{\alpha _{n}}{4}\left(
\int_{B_{R_{n}}(0)}|u_{R_{n}}^{1}|^{2}dx\right) ^{2}+\frac{2}{p}%
\int_{B_{R_{n}}(0)}|u_{R_{n}}^{1}|^{p}dx-\frac{1}{2}\int_{\partial
B_{R_{n}}(0)}|\nabla u_{R_{n}}^{1}|^{2}x\cdot nd\sigma \right.  \notag \\
&&\left. -\alpha _{n}\int_{B_{R_{n}}(0)}\int_{B_{R_{n}}(0)}\log
(|x-y|)|u_{R_{n}}^{1}(x)|^{2}|u_{R_{n}}^{1}(y)|^{2}dxdy\right)  \notag \\
&=&\frac{2}{p}\lim_{n\rightarrow \infty
}\int_{B_{R_{n}}(0)}|u_{R_{n}}^{1}|^{p}dx.  \label{E52}
\end{eqnarray}%
We now prove that $\int_{B_{R_{n}}(0)}|u_{R_{n}}^{1}|^{p}dx\neq o_{n}(1)$.
Otherwise, by (\ref{E52}), we have $\lim_{n\rightarrow \infty }\lambda
_{R_{n}}^{1}\rho _{n}=0$ and thus
\begin{eqnarray*}
\int_{B_{R_{n}}(0)}|\nabla u_{R_{n}}^{1}|^{2}dx &=&-\lambda _{R_{n}}^{1}\rho
_{n}+\int_{B_{R_{n}}(0)}|u_{R_{n}}^{1}|^{p}dx \\
&&-\alpha _{n}\int_{B_{R_{n}}(0)}\int_{B_{R_{n}}(0)}\log
(|x-y|)|u_{R_{n}}^{1}(x)|^{2}|u_{R_{n}}^{1}(y)|^{2}dxdy \\
&=&o_{n}(1),
\end{eqnarray*}%
which contradicts with (\ref{E37}). So $\lambda _{0}>0$.

Let $v_{n}^{1}=u_{R_{n}}^{1}-v_{0}$. Since $-\Delta v_{0}+\lambda
_{0}v_{0}=|v_{0}|^{p-2}v_{0}$, it holds $\int_{\mathbb{R}^{2}}|\nabla
v_{0}|^{2}dx-\frac{p-2}{p}\int_{\mathbb{R}^{2}}|v_{0}|^{p}dx=0$. Using this,
together with (\ref{E25}), (\ref{E51}) and the Brezis-Lieb lemma, yields
\begin{eqnarray*}
&&\int_{\mathbb{R}^{2}}|\nabla v_{n}^{1}|^{2}dx+\int_{\mathbb{R}^{2}}|\nabla
v_{0}|^{2}dx+o_{n}(1) \\
&=&\int_{B_{R_{n}}(0)}|\nabla u_{R_{n}}^{1}|^{2}dx \\
&=&\frac{p-2}{p}\int_{B_{R_{n}}(0)}|u_{R_{n}}^{1}|^{p}dx+\frac{\alpha
_{n}\rho ^{2}}{4}+\frac{R_{n}}{2}\int_{\partial B_{R_{n}}(0)}|\nabla
u_{R_{n}}^{1}|^{2}d\sigma \\
&=&\frac{p-2}{p}\int_{\mathbb{R}^{2}}|u_{R_{n}}^{1}|^{p}dx+o_{n}(1) \\
&=&\frac{p-2}{p}\int_{\mathbb{R}^{2}}|u_{R_{n}}^{1}|^{p}dx+\frac{p-2}{p}%
\int_{\mathbb{R}^{2}}|v_{0}|^{p}dx+o_{n}(1) \\
&=&\frac{p-2}{p}\int_{\mathbb{R}^{2}}|v_{n}^{1}|^{p}dx+\int_{\mathbb{R}%
^{2}}|\nabla v_{0}|^{2}dx+o_{n}(1),
\end{eqnarray*}%
which shows that $\int_{\mathbb{R}^{2}}|\nabla v_{n}^{1}|^{2}dx=\frac{p-2}{p}%
\int_{\mathbb{R}^{2}}|v_{n}^{1}|^{p}dx+o_{n}(1)$ and so $J_{0}(v_{n}^{1})=%
\frac{p-4}{2(p-2)}\int_{\mathbb{R}^{2}}|\nabla v_{n}^{1}|^{2}dx+o_{n}(1)$.
If $\int_{\mathbb{R}^{2}}|\nabla v_{n}^{1}|^{2}dx=o_{n}(1)$, then $%
u_{R_{n}}^{1}\rightarrow v_{0}$ in $H^{1}(\mathbb{R}^{2})$. Otherwise, there
holds $J_{0}(v_{n}^{1})\geq \frac{p-4}{2(p-2)}\left( \frac{p}{(p-2)\rho
\mathcal{C}_{p}}\right) ^{\frac{2}{p-4}}+o_{n}(1)$. Then by Lemmas \ref{L4.9}
$(i)$ and \ref{L4.12}, one has%
\begin{eqnarray*}
m_{\rho }+o_{n}(0)\geq \overline{C}_{R_{n},\rho _{n}}=\widetilde{J}%
_{R_{n}}(u_{R_{n}}^{1}) &=&J_{0}(v_{n}^{1})+J_{0}(v_{0})+o_{n}(1) \\
&\geq &m_{\rho }+\frac{p-4}{2(p-2)}\left( \frac{p}{(p-2)\rho \mathcal{C}_{p}}%
\right) ^{\frac{2}{p-4}}+o_{n}(1),
\end{eqnarray*}%
which is impossible. Therefore, $(\lambda _{0},v_{0})\in
\mathbb{R}
^{+}\times H^{1}(\mathbb{R}^{2})$ is the solution of Problem (\ref{E32}).

\section*{Acknowledgments}

J. Sun was supported by National Natural Science Foundation of China (Grant
No. 12371174). S. Yao was supported by National Natural Science Foundation
of China (Grant No. 12501218) and Natural Science Foundation of Shandong
Province (Grant No. ZR2025QC1512).


\begin{thebibliography}{99}
\bibitem{ACFM} F.S. Albuquerquel, J.L. Carvalho, G.M. Figueiredo, E.
Medeiros, On a planar non-autonomous Schr\"{o}dinger--Poisson system
involving exponential critical growth, Calc. Var. (2021) 60:40.

\bibitem{ABM} C.O. Alves, E. de S. B\"{o}er, O.H. Miyagaki, Existence of
normalized solutions for the planar Schr\"{o}dinger-Poisson system with
exponential critical nonlinearity, Differ. Integral Equ. 36 (2023) 947--970.

\bibitem{A2021} A. Azzollini, The planar Schr\"{o}dinger-Poisson system with
a positive potential, Nonlinearity 34 (2021) 5799--5820.

\bibitem{BQZ} T. Bartsch, S. Qi, W. Zou, Normalized solutions to Sch\"{o}dinger equations with potential and inhomogeneous nonlinearities on large
smooth domains, Math. Ann. 390 (2024) 4813--4859.

\bibitem{BBL} R. Benguria, H. Brezis, E. Lieb, The Thomas-Fermi-von Weizs\"{a}cker theory of atoms and molecules, Commun. Math. Phys. 79 (1981)
167--180.

\bibitem{BCJS} J. Borthwick, X. Chang, L. Jeanjean, N. Soave, Bounded
Palais-Smale sequences with Morse type information for some constrained
functionals, Trans. Amer. Math. Soc. 377 (2024) 4481--4517.

\bibitem{CT2021} D. Cassani, C. Tarsi, Schr\"{o}dinger-Newton equations in
dimension two via a Pohozaev-Trudinger log-weighted inequality, Calc. Var.
60 (2021) 197.

\bibitem{CL} I. Catto, P.-L. Lions, Binding of atoms and stability of
molecules in Hartree and Thomas-Fermi type theories, Commun. Partial
Differential Equations 18 (1993) 1149--1159.

\bibitem{CRT} S. Chen, V.D. R\v{a}dulescu, X. Tang, Multiple normalized
solutions for the planar Schr\"{o}dinger-Poisson system with critical
exponential growth, Math. Z. (2024) 306:50.

\bibitem{CT2020} S. Chen, X. Tang, Axially symmetric solutions for the
planar Schr\"{o}dinger-Poisson system with critical exponential growth, J
Differential Equations 269 (2020) 9144--9174.

\bibitem{CJ2019} S. Cingolani, L. Jeanjean, Stationary waves with prescribed
$L^{2}$-norm for the planar Schr\"{o}dinger-Poisson system, SIAM J. Math. Anal. 51 (2019) 3533--3568.

\bibitem{CW2016} S. Cingolani, T. Weth, On the planar Schr\"{o}dinger-Poisson system, Ann. Inst. Henri Poincar\'{e}, Anal. Non Lin\'{e}aire 33 (2016) 169--197.

\bibitem{DW2017} M. Du, T. Weth, Ground states and high energy solutions of
the planar Schr\"{o}dinger-Poisson system, Nonlinearity 30 (2017)
3492--3515.

\bibitem{GT1983} D. Gilbarg, N. Trudinger, Elliptic Partial Differential
Equations of Second Order. Springer, Berlin (1983).

\bibitem{L} E.H. Lieb, Thomas-Fermi and related theories of atoms and
molecules, Rev. Mod. Phys. 53 (1981) 263--301.

\bibitem{L01} E.H. Lieb, M. Loss, Analysis, second edition, Graduate Studies
in Mathematics, vol. 14, American Math. Society, Providence, RI, 2001.

\bibitem{Li} P.-L. Lions, Solutions of Hartree-Fock equations for Coulomb
systems, Comm. Math. Phys. 109 (1984) 33--97.

\bibitem{LRZ2023} Z. Liu, V. R\u{a}dulescu, J. Zhang, A planar Schr\"{o}dinger-Newton system with Trudinger-Moser critical growth, Calc. Var. 62
(2023) 31.

\bibitem{MRS} P.A. Markowich, C.A. Ringhofer, C. Schmeiser, Semiconductor
Equations, Springer-Verlag, New York, 1990.

\bibitem{N1959} L. Nirenberg, On elliptic partial differential equations,
Ann. Scuola Norm. Sup. Pisa 13 (1959) 115--162.

\bibitem{S2008} J. Stubbe, Bound states of two-dimensional Schr\"{o}dinger-Newton equations, arXiv:0807.4059v1 (2008).

\bibitem{So2020-1} N. Soave, Normalized ground states for the NLS equation
with combined nonlinearities, J. Differential Equations 269 (2020)
6941--6987.

\bibitem{So2020} N. Soave, Normalized ground states for the NLS equation
with combined nonlinearities: the Sobolev critical case, J. Funct. Anal. 279
(2020) 108610.

\bibitem{SYZ2024} J. Sun, S. Yao, J. Zhang, Planar Schr\"{o}dinger-Poisson
system with exponential critical growth: local well-posedness and standing
waves with prescribed mass, Stud. Appl. Math. 153 (2024) 12760.

\bibitem{TZ2022} Z. Tang, C. Zhang, L. Zhang, L. Zhou, Normalized multibump
solutions to nonlinear Schr\"{o}dinger equations with steep potential well,
Nonlinearity 35 (2022) 4624--4658.

\bibitem{PWZ2024} P. Pucci, L. Wang, B. Zhang, Bifurcation and regularity of
entire solutions for the planar nonlinear Schr\"{o}dinger-Poisson system,
Math. Ann. 389 (2024) 4265--4300.

\bibitem{ZZ2022} C. Zhang, X. Zhang, Normalized multi-bump solutions of
nonlinear Schr\"{o}dinger equations via variational approach, Calc. Var.
(2022) 61:57
\end{thebibliography}
\end{document}